\tikzset{mytext/.style={font=\small, text=black}}
\newtheorem{Theorem}{Theorem}
\newtheorem{Conjecture}{Conjecture}
\newtheorem{proposition}{Proposition}[section]
\newtheorem{lemma}[proposition]{Lemma}
\newtheorem{corollary}[proposition]{Corollary}
\newtheorem{theorem}[proposition]{Theorem}
\newtheorem{Question}[Conjecture]{Question}
\theoremstyle{definition}
\numberwithin{equation}{section}
\title{Restricted Hausdorff spectra of $q$-adic automorphisms}
\author{Jorge Fariña-Asategui}
\address{Jorge Fariña-Asategui: Centre for Mathematical Sciences, Lund University, 223 62 Lund, Sweden -- Department of Mathematics, University of the Basque Country UPV/EHU, 48080 Bilbao, Spain}
\email{jorge.farina\_asategui@math.lu.se}
\keywords{Hausdorff dimension, restricted Hausdorff spectra, self-similar groups, branch groups, $q$-adic automorphisms}
\subjclass[2020]{Primary: 20E08, 28A78; Secondary: 20E18}
\thanks{The author is supported by the Spanish Government, grant PID2020-117281GB-I00, partly with FEDER funds. The author also acknowledges support from the Walter Gyllenberg Foundation from the Royal Physiographic Society of Lund}
\begin{document}

\begin{abstract}

Firstly, we completely determine the self-similar Hausdorff spectrum of the group of $q$-adic automorphisms where $q$ is a prime power, answering a question of Grigorchuk. Indeed, we take a further step and completely determine its Hausdorff spectra restricted to the most important subclasses of self-similar groups, providing examples differing drastically from the previously known ones in the literature. Our proof relies on a new explicit formula for the computation of the Hausdorff dimension of closed self-similar groups and a generalization of iterated permutational wreath products. 

Secondly, we provide for every prime $p$ the first examples of just infinite branch pro-$p$ groups with zero Hausdorff dimension in $\Gamma_p$, giving strong evidence against a well-known conjecture of Boston.
\end{abstract}

\maketitle

\section{introduction}
\label{section: introduction}

The study of the Hausdorff dimension on profinite groups was initiated by Abercrombie \cite{Abercrombie} and by Barnea and Shalev \cite{BarneaShalev} in the 1990s, and it has generated a lot of interest recently; see \cite{PadicAnalytic} for an overview and see \cite{IkerBenjamin, IkerAnitha1, IkerAnitha2, OhianaAlejandraBenjamin, AndoniJon} for more recent work. For a countably-based profinite group $G$ endowed with a filtration of open normal subgroups $\{G_n\}_{n\ge 1}$, the Hausdorff dimension in $G$ of a closed subgroup $H\le G$ with respect to $\{G_n\}_{n\ge 1}$ is given by
$$\mathrm{hdim}_{G}^{\{G_n\}}(H)=\liminf_{n\to\infty}\frac{\log|HG_n:G_n|}{\log|G:G_n|}\in [0,1].$$
The Hausdorff dimension depends on the choice of the filtration $\{G_n\}_{n\ge 1}$. For profinite groups acting on rooted trees a natural congruence filtration is available, namely the one given by level stabilizers, whereas for general pro-$p$ groups there are several standard filtrations widely used in the literature; see \cite{PadicAnalytic}.

An important family of pro-$p$ groups is the one of $p$-adic analytic pro-$p$ groups, i.e. pro-$p$ groups which are analytic manifolds over the $p$-adic numbers and both the group multiplication and inversion are analytic functions; see \cite{PadicBook} for a good introduction to the topic. Barnea and Shalev \cite{BarneaShalev} proved that the Hausdorff dimension of closed subgroups of $p$-adic analytic pro-$p$ groups with respect to the filtration $\{G^{p^n}\}_{n\ge 1}$ coincides with their normalized dimension as Lie groups. Furthermore, they characterized $p$-adic analytic pro-$p$ groups in terms of the positivity of the Hausdorff dimension of their infinite closed subgroups based on a result of Zelmanov~\cite{Zelmanov}. They conjectured that $p$-adic analytic pro-$p$ groups can be characterized in terms of their \textit{Hausdorff spectrum}, i.e. by the set of Hausdorff dimensions of all their closed subgroups, as those pro-$p$ groups with finite Hausdorff spectrum. This conjecture remains open after more than two decades with only positive results for some particular cases \cite{PadicAnalytic}.

An important family of groups deserving interest is the one of \textit{branch} groups, i.e. those groups with a tree-like subnormal subgroup structure. Branch groups have been extensively studied for several decades due to their interesting algebraic properties, which have answered many open questions in group theory; see \cite{Handbook} for an introduction to branch groups. Just infinite branch profinite groups constitute one of the two disjoint classes of groups partitioning the class of \textit{just infinite} profinite groups, that is, profinite groups all of whose non-trivial closed subgroups are open \cite{NewHorizonsGrigorchuk, Wilson}. The other class is the one of \textit{hereditarily just infinite} profinite groups, those just infinite profinite groups whose non-trivial closed subgroups are again just infinite profinite.  Among branch profinite groups, of particular interest is the group of \textit{$p$-adic automorphisms}~$\Gamma_p$, as it contains every countably-based pro-$p$ group \cite{NewHorizonsGrigorchuk}. The group $\Gamma_p$ can be defined as the iterated permutational wreath product of the cyclic group of order $p$ and it is a Sylow pro-$p$ subgroup of~$\mathfrak{A}$, where $\mathfrak{A}$ denotes the automorphism group of the $p$-adic tree. There is a strong connection between just infinite branch pro-$p$ groups and Galois groups. Indeed, the Fontaine-Mazur conjecture in number theory \cite{FontaineMazur} implies that the just infinite pro-$p$ quotients of an absolute Galois group of $p$-extensions unramified at~$p$ cannot be $p$-adic analytic \cite{NewHorizonsBoston}. An extension of the Fontaine-Mazur conjecture by Boston proposes that these just infinite pro-$p$ quotients are precisely branch pro-$p$ groups \cite{NewHorizonsBoston}. 

The action of absolute Galois groups on $p$-adic vector spaces has answered long-standing open problems in mathematics, including Fermat's Last Theorem \cite{Wiles}. However the Fontaine-Mazur conjecture suggests that $p$-adic Galois representations are trivial for $p$-extensions unramified at $p$ and therefore new actions of these Galois groups should be investigated. A promising alternative seems to be considering actions on $p$-adic trees \cite{BostonJones}. In fact, these representations of Galois groups on the $p$-adic tree are conjectured to have image of positive Hausdorff dimension in $\Gamma_p$, as an analogue of the finite-index image for $p$-adic representations. This together with the extension of the Fontaine-Mazur conjecture by Boston led to the following purely group-theoretical conjecture of Boston in the spirit of Barnea and Shalev's characterization of $p$-adic analytic groups. Note that from now on a $p$-adic representation of a group $G$ will refer to an embedding of $G$ in $\Gamma_p$.

\begin{Conjecture}[see {\cite[Problem 9]{NewHorizonsBoston}}]
\label{Conjecture: New horizons}
A just infinite pro-$p$ group is branch if and only if it admits a $p$-adic representation of positive Hausdorff dimension in $\Gamma_p$.
\end{Conjecture}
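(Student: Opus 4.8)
The statement above is a conjecture, and part of this paper is devoted to producing evidence \emph{against} its forward implication rather than to establishing the biconditional; I therefore describe how I would attack each direction, with emphasis on how I would manufacture the relevant counterexamples.

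The implication that a just infinite pro-$p$ group admitting a $p$-adic representation of positive Hausdorff dimension must be branch is the plausible one, and I would attack it through the dichotomy recalled above: every just infinite profinite group is branch or hereditarily just infinite, so it would suffice to show that a hereditarily just infinite pro-$p$ group cannot embed in $\Gamma_p$ with positive Hausdorff dimension. Concretely one would bound the order of the image of such a group $H$ acting on the first $n$ levels of the $p$-adic tree, that is $|H\,\mathrm{St}_{\Gamma_p}(n):\mathrm{St}_{\Gamma_p}(n)|$, by $p^{o(p^n)}$, while $|\Gamma_p:\mathrm{St}_{\Gamma_p}(n)|=p^{(p^n-1)/(p-1)}$; the $p$-adic analytic case, accessible through Barnea--Shalev and Zelmanov, is the natural test case. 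I do not have a complete strategy here and would treat this direction as genuinely open, unlikely to be settled by the present techniques.

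For the converse implication I would seek not a proof but a refutation, and the machinery announced in the abstract is what makes this feasible. The plan has three steps. First, using the generalization of iterated permutational wreath products developed here, work inside the group $W=C_{q_1}\wr C_{q_2}\wr\cdots$ of ``$(q_1,q_2,\dots)$-adic automorphisms'' of the spherically homogeneous rooted tree with $p$-power branching degrees $q_n=p^{a_n}$ --- a mild generalization of the group $\Gamma_q$ of $q$-adic automorphisms, now allowing the local order to grow with the level --- and realize $W$ as a closed subgroup of $\Gamma_p$ by splicing in, below each vertex of level $n-1$, the odometer realization of $C_{p^{a_n}}$ on the next $a_n$ levels of the $p$-adic tree $T_p$, so that level $n$ of the tree of $W$ becomes level $s_n:=a_1+\cdots+a_n$ of $T_p$. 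Second, observe by a direct index count that
\[
\mathrm{hdim}_{\Gamma_p}(W)=\liminf_{n\to\infty}\frac{\sum_{j=1}^{n}a_j\,p^{\,s_{j-1}}}{(p^{\,s_n}-1)/(p-1)},
\]
which is at most a constant multiple of $\liminf_{n\to\infty}a_n/p^{a_n}$; hence if $a_n\to\infty$ then $\mathrm{hdim}_{\Gamma_p}(W)=0$, and since these indices only decrease on passing to a closed subgroup, \emph{every} closed subgroup of such a $W$ has Hausdorff dimension $0$ in $\Gamma_p$. Third --- and this is the entire point --- carve out inside $W$ a closed subgroup $G$ that is simultaneously just infinite and branch, built from spinal-type generators adapted to the generalized wreath product; then $\mathrm{hdim}_{\Gamma_p}(G)=0$ is automatic from the second step, while the new explicit Hausdorff dimension formula for closed self-similar groups does the finer work of locating $G$ and its relatives within the self-similar Hausdorff spectrum.

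The main obstacle is the third step. Being branch forces $G$ to contain, for every $n$, a finite-index subgroup that splits as a direct product of nontrivial rigid vertex stabilizers over the $q_1\cdots q_n$ vertices of level $n$, and just-infiniteness imposes further finiteness conditions through Grigorchuk's criterion for branch groups; neither is automatic inside a ``fat'' wreath product $W$, and both must be engineered into the defining data. The tension is that the branching rates $a_n$ must grow fast enough to dilute the Hausdorff dimension to $0$, yet the spinal data must remain rich enough --- and compatible with the self-similarity hypothesis --- for the dimension formula to apply and for the \emph{abstract} just-infiniteness, not merely just-infiniteness as a group acting on a tree, to go through. Finally, I would flag that this route refutes the conjecture only for the particular realization of $G$ exhibited in $\Gamma_p$; the biconditional asserts the non-existence of \emph{any} embedding of positive Hausdorff dimension, which is strictly stronger, so the honest conclusion is the ``strong evidence against'' stated in the abstract.
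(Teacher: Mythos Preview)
Your overall framing is correct: the statement is a conjecture, and the paper constructs a just infinite branch pro-$p$ group with zero Hausdorff dimension in $\Gamma_p$ as evidence against the forward implication, while noting---as you do at the end---that this does not rule out other embeddings of positive dimension.

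Your proposed route differs substantially from the paper's. You want to embed a spherically homogeneous iterated wreath product $W=C_{p^{a_1}}\wr C_{p^{a_2}}\wr\cdots$ with $a_n\to\infty$ into $\Gamma_p$ via odometers, observe that $W$ (hence every closed subgroup) has dimension $0$, and then carve out a just infinite branch subgroup. The paper instead works directly on the \emph{regular} $q$-adic tree and builds an explicit finitely generated group $G_n=\langle A_n,b_n\rangle$ modelled on the Fabrykowski--Gupta groups: the directed generator $b_n$ carries a ``staircase'' of labels $\sigma$ spread over a rapidly growing sequence of levels $l_1,l_2,\dots$ (with $l_{n+1}=q^{l_n-1}$), and the paper proves by hand that $G_n$ is level-transitive, branch (via $\psi_{l_n}(G_n'')\ge G_{n+1}'\times\cdots\times G_{n+1}'$), just infinite, and of zero-dimensional closure by a direct bound on its congruence quotients. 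Your dimension-$0$ step is cleaner---it would fall out of the ambient group---but the paper avoids the spherically homogeneous detour and odometer embedding entirely.

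The genuine gap in your proposal is your third step: you flag ``carving out a just infinite branch subgroup of $W$'' as the main obstacle and leave it open. That is precisely the content the paper supplies, and it does so by a different mechanism than you anticipate---not spinal generators on a spherically homogeneous tree, but the staircase construction on the regular tree. Note also that the ``generalization of iterated permutational wreath products'' in the paper (the groups $G_{\mathcal{S}}$) still lives on the regular tree and is explicitly shown \emph{not} to produce just infinite examples, since such $G_{\mathcal{S}}$ are never topologically finitely generated when they are infinite pro-$p$; the paper therefore abandons that machinery for the just-infinite construction. Your invocation of that machinery and of the self-similar Hausdorff dimension formula for the just-infinite step is thus misdirected: the groups $G_n$ the paper actually uses are not self-similar, and the dimension computation for them is a separate, elementary estimate.
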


\cref{Conjecture: New horizons} has remained open for over two decades, with only a small progress  for 1-dimensional subgroups by Abért and Virág \cite[Section 9]{AbertVirag}. Since hereditarily just infinite pro-$p$ groups have trivial Hausdorff dimension in $\Gamma_p$ \cite[Section 15]{NewHorizonsGrigorchuk}, \cref{Conjecture: New horizons} reduces to whether all just infinite branch pro-$p$ groups have positive Hausdorff dimension in $\Gamma_p$ or not. One of the main goals of this paper is to construct just infinite branch pro-$p$ groups with zero Hausdorff dimension in $\Gamma_p$. Our construction is based on a new family of groups introduced also in this paper in order to answer an open problem of Grigorchuk on the Hausdorff dimension of self-similar groups.

The conjecture of Barnea and Shalev suggests that the Hausdorff spectrum may be used to characterize certain pro-$p$ groups. Unfortunately, the Hausdorff spectrum of branch pro-$p$ groups is the full interval $[0,1]$ as proved by Klopsch and Röver \cite[Chapter~8]{KlopschPhD} and there are actually several non-branch groups with complete Hausdorff spectrum too; compare \cite{BarneaMatteo, IkerBenjamin, IkerAnitha1, OhianaAlejandraBenjamin, AndoniJon, KlopschPhD}. In the context of $p$-adic analytic pro-$p$ groups, closed subgroups are again $p$-adic analytic; thus, a natural approach is to restrict the Hausdorff spectrum to a family of subgroups sharing a common property $\mathcal{P}$ with the whole group. Then we define the \textit{$\mathcal{P}$ Hausdorff spectrum} of a profinite group as the set of values of the Hausdorff dimension of its closed subgroups that satisfy property $\mathcal{P}$. A natural choice for the property $\mathcal{P}$ is self-similarity, described in terms of the action of the group on a regular rooted tree; see \cref{section: Preliminaries} for precise definitions and the unexplained terms in the introduction. All known examples of closed self-similar groups of $p$-adic automorphisms have rational Hausdorff dimension \cite{BartholdiHausdorff, pBasilica,JorgeMikel,GGSGustavo, NewHorizonsGrigorchuk,Second,GeneralizedBasilica,SunicHausdorff}, which motivated Grigorchuk to pose the following question. Recall that $\mathfrak{A}$ denotes the automorphism group of a regular rooted tree.

\begin{Question}[see {\cite[Problem 7.1]{GrigorchukFinite}}]
\label{question: Grigorchuk problem 7.1}
What is the self-similar Hausdorff spectrum of the group $\mathfrak{A}$? In particular, is its self-similar Hausdorff spectrum contained in the rationals?
\end{Question}

Let us fix $m\ge 2$ and $\mathfrak{A}$ the automorphism group of the $m$-adic tree. For a subgroup $H\le \mathrm{Sym}(m)$ let the group $W_H$ denote the \textit{iterated permutational wreath product} of $H$, where $\mathrm{Sym}(m)$ is the group of permutations of a set of $m$ elements. The group $W_H$ can be seen as a closed subgroup of $\mathfrak{A}$ and we have the equality $\mathfrak{A}=W_{\mathrm{Sym}(m)}$. For $m=p$ a prime, we get $\Gamma_p:=W_{\langle \sigma\rangle}$ where $\sigma:=(1\,2\,\dotsb \, p)\in \mathrm{Sym}(p)$ and, more generally, we define the group of \textit{$q$-adic automorphisms} $\Gamma_q$ as $\Gamma_q:=W_{\langle \sigma \rangle}$ where $\sigma:=(1\,2\,\dotsb \, q)\in\mathrm{Sym}(q)$ for $q$ a $p$-power.

As soon as one leaves the binary rooted tree it is clear that \cref{question: Grigorchuk problem 7.1} needs to be restricted to subgroups of $p$-adic automorphisms (or $q$-adic automorphisms), as otherwise there are clear counterexamples, like the closure of the Hanoi towers group. This was discussed by \v{S}uni\'{c} in \cite{SunicHausdorff}, where he asked explicitly about the rationality of the Hausdorff dimension of self-similar groups in $\Gamma_p$. 

\cref{question: Grigorchuk problem 7.1} can be further generalized by replacing the property of being self-similar with stronger properties such as being weakly regular branch or regular branch. The only known results on this question are for the case of regular branch groups \cite{BartholdiHausdorff, PenlandMaximalHausdorff}. It is remarkable that almost nothing is known about the Hausdorff dimension of general self-similar groups after almost two decades of Grigorchuk posing \cref{question: Grigorchuk problem 7.1}. This may be due to the lack of ``nice" tools to approach this problem. A main goal of this paper is to provide these tools to finally close this gap and completely determine the Hausdorff spectra of the group of $q$-adic automorphisms, restricted to all the most important subclasses of self-similar subgroups. We obtain the following striking result.

\begin{Theorem}
\label{Theorem: restricted Hausdorff spectra p-adic automorphisms}
Let $\Gamma_{q}$ be the group of $q$-adic automorphisms for $q$ a $p$-power. Then the group $\Gamma_{q}$ has the following four restricted Hausdorff spectra:
\begin{enumerate}[\normalfont(i)]
    \item self-similar, super strongly fractal and level-transitive  Hausdorff spectrum:~$[0,1]$;
    \item weakly regular branch Hausdorff spectrum: $(0,1]$;
    \item regular branch Hausdorff spectrum: $\mathbb{Z}[1/p]\cap(0,1]$;
    \item self-similar branch Hausdorff spectrum: $[0,1]$.
\end{enumerate}
\end{Theorem}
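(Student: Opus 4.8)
The plan is to produce, for each target set of dimensions, explicit families of closed subgroups of $\Gamma_q$ realizing every value, using the ``generalized iterated permutational wreath products'' and the explicit Hausdorff dimension formula announced in the abstract. First I would treat the regular branch spectrum (iii), which should be the most constrained: a closed regular branch subgroup $G$ of $\Gamma_q$ contains $K\times\dotsb\times K$ (on the first-level subtrees) for some open $K\le G$, and the layer-by-layer structure forces the numbers $\log|G:\st_G(n)|$ to satisfy a linear recursion whose characteristic root is a power of $p$; normalizing against $\log|\Gamma_q:\st_{\Gamma_q}(n)| = p^n\log p \cdot(\text{const})$ shows every such dimension lies in $\mathbb{Z}[1/p]$. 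Conversely, for each $r\in\mathbb{Z}[1/p]\cap(0,1]$ I would build a self-similar regular branch group of dimension exactly $r$ by a direct construction (choosing, at each level, a subgroup of the rigid vertex stabilizers of the appropriate index, e.g. via a ``spinal''-type or periodically-defined generating pattern), computing its dimension with the explicit formula; the upper bound of $1$ is automatic and $0$ is genuinely excluded since a nontrivial regular branch group has $|G:\st_G(n)|$ growing like $c^{p^n}$ with $c>1$.

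Next I would handle (ii), the weakly regular branch spectrum: the lower endpoint $0$ is again excluded because a weakly regular branch group contains a nontrivial $K$ with $K\times\dotsb\times K\le K$, forcing exponential index growth, hence positive dimension; to realize every value in $(0,1]$, including the irrationals, I would use the generalized iterated permutational wreath product construction $W$ built from a \emph{sequence} of permutation groups $H_1,H_2,\dots\le\mathrm{Sym}(q)$ (or more refined local data) that are all $p$-groups, arranged so that the resulting group is weakly regular branch while the partial index ratios $\log|W:\st_W(n)|/\log|\Gamma_q:\st_{\Gamma_q}(n)|$ have any prescribed $\liminf$ in $(0,1]$ — the non-uniformity of the sequence is exactly what produces irrational limits. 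For (i) and (iv) I would do the same but drop the branching requirements, so the sequence of local groups is completely free; here even dimension $0$ is attainable (take the local data to become trivial on a very sparse set of levels while still acting level-transitively via occasional full cycles), and super strong fractality and level-transitivity are arranged by insisting each $H_i$ is transitive and by a standard ``sections generate'' bookkeeping. The branch version (iv) additionally requires verifying the tree-like subnormal structure, which for these wreath-type groups follows from level-transitivity together with a commutator computation showing the rigid stabilizers are nontrivial.

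The main obstacle I anticipate is the simultaneous control of two competing requirements in parts (ii) and (iv): to get an \emph{arbitrary} real number in $(0,1]$ as a $\liminf$ one wants the local data to fluctuate wildly between levels, but to remain weakly regular branch (or branch) one needs enough uniform structure at \emph{every} level that the rigid vertex stabilizers stay large. Reconciling these is precisely where the generalization of iterated permutational wreath products — allowing the ``branching part'' and the ``dimension-tuning part'' of the local action to be engineered independently at each level — should do the work, and making that decomposition precise, together with a clean statement of the Hausdorff dimension formula for such groups, is the technical heart of the argument. A secondary difficulty is the exact endpoint analysis: showing $0\notin$ the weakly-regular-branch spectrum and $0\notin$ the regular-branch spectrum requires a sharp lower bound $\mathrm{hdim}_{\Gamma_q}(W)\ge \log_p(\text{something}>1)/(\text{period})>0$ from the self-embedding $K\times\dotsb\times K\hookrightarrow K$, which I would extract from iterating the index inequality down the tree.
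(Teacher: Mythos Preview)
Your overall architecture---generalized iterated wreath products plus an explicit dimension formula, with the $q$-ary digits of $1-\gamma$ dictating the level-by-level index data---is exactly the paper's strategy, and your treatment of (iii) and the positivity argument for (ii) are essentially what the paper does (citing Bartholdi and \v{S}uni\'{c} for the containment in $\mathbb{Z}[1/p]$, and proving positivity from the branching self-embedding via the formula). But two points need real correction.

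First, for (i) your plan to reach dimension~$0$ by letting the local data become \emph{trivial} on a sparse set of levels cannot work: level-transitivity forces every $S_n$ to act transitively below some vertex at level~$n$, and super strong fractality forces $\psi_x(S_n)=S_{n-1}$, so no $S_n$ can be trivial. The paper's mechanism is different and more delicate: one keeps every $S_n$ nontrivial but squeezes it down to a subgroup of index $q^{\mu_n}$ in $S_{n-1}^q$ containing the diagonal $D_q(S_{n-1})$, with the $\mu_n\in\{0,\dots,q-1\}$ chosen as the $q$-ary digits of $1-\gamma$. Dimension~$0$ corresponds to $\mu_n=q-1$ for all $n$, i.e.\ each $S_n$ is precisely the diagonal; nothing is ever trivial.

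Second, and more seriously, part (iv) is where your proposal has a genuine gap. You correctly flag the tension between branchness (rigid stabilizers of finite index at every level) and tuning the dimension to an arbitrary real, but ``engineering the branching part and the dimension-tuning part independently at each level'' is not enough of an idea. The issue is that the self-similar zero-dimensional group from (i) (the diagonal tower) is \emph{not} branch---its rigid level stabilizers are trivial. The paper's resolution is a specific trick, the $\{\lambda_n\}$-shift: given the digit sequence $\{\mu_k\}$ from (i), one replaces it by $\widetilde{\mu}_{k+\lambda_k}=q^{\lambda_k}\mu_k$ and $\widetilde{\mu}_n=0$ otherwise, and correspondingly sets $\widetilde{S}_n$ to be a full direct product at level $\lambda_k$ whenever $n=k+\lambda_k$ and the unconstrained $\widetilde{S}_{n-1}^q$ in between. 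The shift leaves $\sum \widetilde{\mu}_n/q^n=\sum\mu_k/q^k$ unchanged, so the dimension is preserved, while the insertion of genuine direct-product levels forces $\mathrm{Rist}_{\widetilde{S}_n}(\lambda_k)=\widetilde{S}_n$ for all large $n$, which is exactly the branch condition. Your proposal does not contain this idea, and without it there is no construction of a self-similar branch group of dimension~$0$.
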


It is worth mentioning that the spectra in (ii) and (iii) remain the same under the extra assumption of super strong fractality. However the self-similar branch groups in (iv) are far from satisfying any fractality condition; see \cref{theorem: characterization of profinite regular branch groups}. Both (ii) and (iv) show the classes of weakly regular branch and self-similar branch groups are much richer and different from the one of regular branch groups than anticipated. For instance, we obtain the first examples in the literature of self-similar branch profinite groups acting on regular rooted trees which are not regular branch.

The proof of \cref{Theorem: restricted Hausdorff spectra p-adic automorphisms} is organized into different steps. First, we derive a universal formula for the computation of the Hausdorff dimension of closed self-similar groups. For an arbitrary subgroup $G\le\mathfrak{A}$ we define the sequence $\{r_n\}_{n\ge 1}$ as
$$r_n:=m\log|G_{n-1}|-\log|G_{n}|+\log|G_1|,$$
and its forward gradient
$$\{s_n\}_{n\ge 1}:=\nabla\{r_n\}_{n\ge 1}=\{r_{n+1}-r_n\}_{n\ge 1}.$$
We note that the sequence $\{s_n\}_{n\ge 1}$ coincides with the so-called \textit{series of obstructions} introduced by Petschick and Rajeev in \cite{GeneralizedBasilica}. The key step is to study the sequences $\{r_n\}_{n\ge 1}$ and $\{s_n\}_{n\ge 1}$ via their ordinary generating functions
$$R_G(x):=\sum_{n=1}^\infty r_nx^n\quad\text{and}\quad S_G(x):=\sum_{n=1}^\infty s_nx^n.$$ 

We say a profinite group has \textit{strong Hausdorff dimension} if its Hausdorff dimension is given by a proper limit. Our main result here is the following.

\begin{Theorem}
\label{theorem: Formula Hausdorff dimension}
Let $G\le W_H$ where $H\le \mathrm{Sym}(m)$. Then, if all the terms of the sequence $\{r_n\}_{n\ge 1}$ are of equal sign, the closure of $G$ has strong Hausdorff dimension in $W_H$. What is more, we have
$$\mathrm{hdim}_{W_H}(\overline{G})=\frac{1}{\log |H|}\big(\log|G_1|-S_G(1/m)\big).$$
\end{Theorem}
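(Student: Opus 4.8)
The plan is to relate the Hausdorff dimension directly to the generating function $S_G(x)$ by unwinding the definitions and performing an Abel-type summation. First I would recall that $\mathrm{hdim}_{W_H}(\overline{G}) = \liminf_{n\to\infty} \frac{\log|G_n|}{\log|(W_H)_n|}$, and that $\log|(W_H)_n| = \log|H|\cdot(1 + m + \dots + m^{n-1}) = \log|H|\cdot\frac{m^n-1}{m-1}$, since $W_H$ acts with the full block structure of $H$ at each level. So the whole problem is to compute the asymptotics of $\log|G_n|$ in terms of the lower-level data. The definition $r_n = m\log|G_{n-1}| - \log|G_n| + \log|G_1|$ can be rearranged as $\log|G_n| = m\log|G_{n-1}| + \log|G_1| - r_n$; iterating this recursion from the bottom (using $\log|G_0| = 0$) yields a closed form
$$\log|G_n| = \sum_{k=1}^{n} m^{n-k}\bigl(\log|G_1| - r_k\bigr).$$
This is the crucial identity; everything afterward is bookkeeping with geometric sums.

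Next I would divide by $\log|(W_H)_n|$ and take the limit. Writing $\log|G_n| = \log|G_1|\cdot\frac{m^n-1}{m-1} - \sum_{k=1}^n m^{n-k} r_k$ and dividing, the first term contributes exactly $\frac{\log|G_1|}{\log|H|}$. For the second term, $\frac{1}{\log|H|}\cdot\frac{m-1}{m^n-1}\sum_{k=1}^n m^{n-k} r_k = \frac{1}{\log|H|}\cdot\frac{m-1}{m^n-1}\sum_{k=1}^n m^{n-k} r_k$; as $n\to\infty$ the factor $\frac{m-1}{m^n-1}m^{n-k} \to (m-1)m^{-k}$, and one checks $\sum_{k\ge 1}(m-1)m^{-k} r_k$ converges to $(m-1)\,R_G(1/m)$ — but here is where the sign hypothesis enters. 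The reindexed sum $\frac{m-1}{m^n-1}\sum_{k=1}^n m^{n-k} r_k$ is a weighted average of the $r_k$ with weights that are positive and sum to something approaching $\frac{1}{m-1}\cdot(m-1)=1$ after normalization; monotone/dominated convergence applies cleanly precisely because all $r_k$ have the same sign, so there is no cancellation and the tail is controlled. This is what upgrades the $\liminf$ to a genuine limit, giving \emph{strong} Hausdorff dimension. Without the sign condition the partial sums could oscillate and only a $\liminf$ would survive.

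Finally I would convert the answer from $R_G$ to $S_G$. Since $s_n = r_{n+1} - r_n$, summation by parts (or just manipulating the generating functions via $S_G(x) = \frac{1-x}{x}R_G(x) - \frac{r_1}{x}\cdot x = \dots$, being careful with the constant term $r_1$) relates $R_G(1/m)$ to $S_G(1/m)$. Concretely $r_n = r_1 + \sum_{j=1}^{n-1} s_j$, so $\sum_{n\ge 1}(m-1)m^{-n} r_n = r_1 + \sum_{j\ge 1} s_j \sum_{n>j}(m-1)m^{-n} = r_1 + \sum_{j\ge 1} s_j m^{-j} = r_1 + S_G(1/m)$. One then checks $r_1 = m\log|G_0| - \log|G_1| + \log|G_1| = 0$ wait — $r_1 = m\log|G_0|-\log|G_1|+\log|G_1| = 0$ since $G_0$ is trivial; so in fact $\mathrm{hdim}_{W_H}(\overline{G}) = \frac{1}{\log|H|}\bigl(\log|G_1| - S_G(1/m)\bigr)$ as claimed, the $\log|G_1|$ coming from the leading geometric term and $-S_G(1/m)$ from the obstruction series. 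The main obstacle is the middle step: justifying the exchange of limit and infinite sum and proving the partial-sum averages converge rather than merely have a limit inferior — this is exactly where the equal-sign hypothesis is indispensable, and I would isolate it as a short lemma on Cesàro-type averaging of one-signed sequences against geometric weights. I also need to handle the possibility that $\sum s_n$ diverges (to $\pm\infty$), in which case the dimension is $0$ or the formula degenerates, and check this is consistent with the stated conclusion.
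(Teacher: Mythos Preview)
Your proposal is correct and follows essentially the same route as the paper's proof: derive the closed form $\log|G_n| = \sum_{k=1}^n m^{n-k}(\log|G_1|-r_k)$, use the sign hypothesis to pass to the limit via monotone convergence, and then convert $(m-1)R_G(1/m)$ into $S_G(1/m)$ via summation by parts (the paper phrases this last step as the generating-function identity $xS_G(x)=(1-x)R_G(x)$, together with $r_1=0$). One small point you leave implicit: monotonicity of the partial sums $L_n=\sum_{i\le n} r_i m^{-i}$ alone does not force a finite limit; the paper extracts boundedness of $L_n$ directly from the a priori inequality $0\le \log|G_n|/\log|(W_H)_n|\le 1$, which also disposes of the divergent case you flag at the end.
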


 \cref{theorem: Formula Hausdorff dimension} would be of little value if the groups satisfying its hypothesis on the sequence $\{r_n\}_{n\ge 1}$ were rare in the literature. Luckily, this is not the case, as in particular this condition is satisfied by both self-similar and branching groups, where branching groups are those containing copies of themselves and play an important role in the proof of \cref{Theorem: restricted Hausdorff spectra p-adic automorphisms} too; see \cref{proposition: weakly branch Hausdorff}.

\begin{proposition}
\label{proposition: self-similar and branching formula}
    Let $G\le\mathfrak{A}$ be weakly self-similar (or branching). Then the sequence $\{r_n\}_{n\ge 1}$ is non-negative and increasing (respectively non-positive and decreasing) and $G$ satisfies the assumptions in \cref{theorem: Formula Hausdorff dimension}.
\end{proposition}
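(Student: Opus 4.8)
The plan is to reduce both assertions to one pair of ``dual'' inequalities for the layer indices $d_n:=[\mathrm{St}_G(n-1):\mathrm{St}_G(n)]$, $n\ge1$, where $\mathrm{St}_G(k)$ is the $k$-th level stabiliser and $\mathrm{St}_G(0):=G$. Each $d_n$ is finite, since $\mathrm{St}_G(n-1)/\mathrm{St}_G(n)$ embeds into the finite group $\mathrm{St}_{\mathfrak{A}}(n-1)/\mathrm{St}_{\mathfrak{A}}(n)$, and by telescoping $|G_n|=[G:\mathrm{St}_G(n)]=\prod_{k=1}^{n}d_k$. A direct computation then gives $r_1=m\log|G_0|=0$ and
\[
s_n=r_{n+1}-r_n=m\log d_n-\log d_{n+1}=\log\!\Bigl(\tfrac{d_n^{\,m}}{d_{n+1}}\Bigr)\qquad(n\ge1),
\]
whence $r_n=\sum_{k=1}^{n-1}s_k$. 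Therefore, once we show $d_{n+1}\le d_n^{\,m}$ for all $n\ge1$ we obtain $s_n\ge0$, so $\{r_n\}$ is non-negative and non-decreasing; and once we show $d_{n+1}\ge d_n^{\,m}$ for all $n\ge1$ we obtain $s_n\le0$, so $\{r_n\}$ is non-positive and non-increasing. In both cases all terms of $\{r_n\}$ share the same sign, which is precisely the hypothesis of \cref{theorem: Formula Hausdorff dimension}; thus it only remains to prove the two inequalities. (Passing to $\overline{G}$ affects neither the $d_n$ nor $\{r_n\}$, since $G$ is dense in $\overline{G}$ and hence has the same image on every level.) It is worth noting that weak self-similarity amounts to $\psi(\mathrm{St}_G(1))\le G\times\cdots\times G$ while the branching property amounts to the opposite inclusion $G\times\cdots\times G\le\psi(\mathrm{St}_G(1))$, which is exactly what will produce the opposite inequalities.

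For weakly self-similar $G$ and $n\ge1$, I would consider the section map
\[
\mathrm{St}_G(n)/\mathrm{St}_G(n+1)\ \longrightarrow\ \prod_{x}\bigl(\mathrm{St}_G(n-1)/\mathrm{St}_G(n)\bigr),\qquad g\ \longmapsto\ \bigl(g|_x\,\mathrm{St}_G(n)\bigr)_{x},
\]
the product running over the $m$ first-level vertices $x$. This is well defined: $g\in\mathrm{St}_G(n)\le\mathrm{St}_G(1)$ gives $g|_x\in G$ by weak self-similarity, and $g$ fixing level $n$ forces $g|_x$ to fix level $n-1$, so $g|_x\in\mathrm{St}_G(n-1)$; it is a homomorphism because $\psi$ restricts to one on $\mathrm{St}_G(1)$; and its kernel is the set of $g$ all of whose sections lie in $\mathrm{St}_G(n)$, i.e.\ $\mathrm{St}_G(n+1)$. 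Comparing orders yields $d_{n+1}\le d_n^{\,m}$.

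For branching $G$ and $n\ge1$, I would use $G\times\cdots\times G\le\psi(\mathrm{St}_G(1))$ to see first that $\psi^{-1}(\mathrm{St}_G(k)^m)\le\mathrm{St}_G(k+1)$ for every $k\ge0$ (such a preimage lies in $\mathrm{St}_G(1)$ by the branching hypothesis, and all its sections fix level $k$), so in particular $\psi^{-1}(\mathrm{St}_G(n-1)^m)\le\mathrm{St}_G(n)$; and then the identity
\[
\psi^{-1}\bigl(\mathrm{St}_G(n-1)^m\bigr)\cap\mathrm{St}_G(n+1)=\psi^{-1}\bigl(\mathrm{St}_G(n)^m\bigr),
\]
where the inclusion ``$\subseteq$'' comes from $\mathrm{St}_G(n-1)\cap\mathrm{St}_{\mathfrak{A}}(n)=\mathrm{St}_G(n)$ applied coordinatewise to $\psi(g)$. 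Since $\psi$ is injective on $\mathrm{St}_{\mathfrak{A}}(1)$ and so preserves subgroup indices, this identity gives
\[
d_{n+1}=[\mathrm{St}_G(n):\mathrm{St}_G(n+1)]\ \ge\ \bigl[\psi^{-1}(\mathrm{St}_G(n-1)^m):\psi^{-1}(\mathrm{St}_G(n)^m)\bigr]=[\mathrm{St}_G(n-1):\mathrm{St}_G(n)]^m=d_n^{\,m},
\]
as desired.

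The computational part is routine, and the genuine difficulty — the step I expect to be the main obstacle — lies in the branching case: one must carefully track that $\psi$ is only defined on the first-level stabiliser and, crucially, that $\psi^{-1}(\mathrm{St}_G(k)^m)$ actually lands inside $G$ rather than merely inside $\mathfrak{A}$, which is exactly where the branching hypothesis is consumed and which is already needed at $k=0$, i.e.\ in the base case $n=1$ of both arguments (there $\mathrm{St}_G(0)=G$ plays the role of $\mathrm{St}_G(n-1)$).
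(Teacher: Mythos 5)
Your proof is correct and is essentially the paper's argument in a different packaging: the embedding $\mathrm{St}_G(n)/\mathrm{St}_G(n+1)\hookrightarrow \big(\mathrm{St}_G(n-1)/\mathrm{St}_G(n)\big)^{m}$ in the weakly self-similar case and the identity $\psi^{-1}(\mathrm{St}_G(n-1)^m)\cap\mathrm{St}_G(n+1)=\psi^{-1}(\mathrm{St}_G(n)^m)$ in the branching case are precisely the content of the paper's formulas $s_n=\log|\mathfrak{S}_n:\mathfrak{S}_{n+1}|$ and $s_n=-\log|\mathfrak{B}_n:\mathfrak{B}_{n+1}|$ for the nested chains $\mathfrak{S}_n=\psi(\mathrm{St}_G(1))\big(\mathrm{St}_G(n-1)\times\dotsb\times\mathrm{St}_G(n-1)\big)$ and $\mathfrak{B}_n=\psi(\mathrm{St}_G(n))\big(G\times\dotsb\times G\big)$, which the paper likewise derives from $\psi(\mathrm{St}_G(n))=\psi(\mathrm{St}_G(1))\cap\big(\mathrm{St}_G(n-1)\times\dotsb\times\mathrm{St}_G(n-1)\big)$ and the Second Isomorphism Theorem. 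The only cosmetic difference is that you establish the layer inequalities $d_{n+1}\le d_n^{\,m}$ (respectively $d_{n+1}\ge d_n^{\,m}$) one level at a time to get the sign of $s_n$, whereas the paper reads off the sign and monotonicity of $\{r_n\}_{n\ge 1}$ simultaneously from the nesting of those chains.
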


We remark that these tools are interesting in their own right. To illustrate this, we combine \cref{theorem: Formula Hausdorff dimension} and \cref{proposition: self-similar and branching formula} to answer a question of Bartholdi on the finitely generated self-similar Hausdorff spectrum of $\mathfrak{A}$; see \cite[Question 1]{BartholdiHausdorff}.

\begin{theorem}
\label{theorem: fg spectrum}
For a non-perfect subgroup $H\le \mathrm{Sym}(m)$ the group $W_H$ does not contain a finitely generated self-similar subgroup of Hausdorff dimension 1. In particular, the group $\mathfrak{A}$ does not contain a finitely generated self-similar subgroup of Hausdorff dimension 1.
\end{theorem}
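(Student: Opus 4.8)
The plan is to argue by contraposition using the generating-function formula from \cref{theorem: Formula Hausdorff dimension}. Suppose $G\le W_H$ is finitely generated, self-similar, and $\overline{G}$ has Hausdorff dimension $1$ in $W_H$. By \cref{proposition: self-similar and branching formula} the sequence $\{r_n\}_{n\ge 1}$ is non-negative and increasing, so \cref{theorem: Formula Hausdorff dimension} applies and gives
$$\mathrm{hdim}_{W_H}(\overline{G})=\frac{1}{\log|H|}\bigl(\log|G_1|-S_G(1/m)\bigr).$$
Since $H\le\mathrm{Sym}(m)$ is non-perfect, $|H|>|H'|$, hence $\log|H|>\log|H:H'|$ makes no immediate contradiction; rather, the point is that $G_1\le H$ (as $G$ acts on the first level through a subgroup of $H$), so $\log|G_1|\le\log|H|$, and the dimension being $1$ forces $\log|G_1|=\log|H|$ — i.e. $G$ acts as all of $H$ on the first level — together with $S_G(1/m)=0$, which (all $s_n\ge 0$ by \cref{proposition: self-similar and branching formula}, since $s_n=\nabla r_n\ge 0$ when $\{r_n\}$ is increasing) forces $s_n=0$ for all $n$, i.e. $\{r_n\}$ is eventually constant; combined with $r_n$ increasing and $r_1=\log|G_1|$ we get $r_n\equiv\log|G_1|$ for all $n$.

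Next I would translate $r_n\equiv\log|G_1|$ back into structural information. The relation $r_n=m\log|G_{n-1}|-\log|G_n|+\log|G_1|$ being equal to $\log|G_1|$ means $\log|G_n|=m\log|G_{n-1}|$ for all $n\ge 2$, i.e. $|G_n|=|G_{n-1}|^m$, i.e. the $n$-th level quotient $G_n$ is the full iterated wreath product $G_1\wr\dots\wr G_1$ ($n$ factors). In other words $\overline{G}=W_{G_1}$ with $G_1=H$ (using the first-level surjectivity established above), so $\overline{G}=W_H$ is the full iterated permutational wreath product. The remaining task is purely a "no finitely generated dense self-similar subgroup" statement: a finitely generated self-similar group cannot be dense in $W_H$ when $H$ is non-perfect.

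For this last step I would use the standard abelianization obstruction. If $G$ is self-similar and topologically generated by $d$ elements, then the first-level decomposition exhibits each generator of $G$, via its sections, inside $G\wr G_1$, and passing to abelianizations one gets that $G^{\mathrm{ab}}$ surjects onto $(G^{\mathrm{ab}})^m\times G_1^{\mathrm{ab}}$ in a way that, iterating, controls the minimal number of generators; concretely $G/G'$ is finitely generated, but density of $G$ in $W_H$ would force $G/\overline{G'}\cong W_H/\overline{W_H'}$, and since $H$ is non-perfect $W_H^{\mathrm{ab}}$ is infinitely generated as a pro-$p$ (or profinite) abelian group — one sees this because $W_H^{\mathrm{ab}}\cong\varprojlim (H^{\mathrm{ab}})^{(m^n-1)/(m-1)}\cdot\dots$ has unbounded rank exactly when $H^{\mathrm{ab}}\ne 1$. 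A finitely generated group cannot have an infinitely generated (as profinite group) abelian quotient, contradiction.

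The main obstacle I anticipate is the final step: making precise that $W_H^{\mathrm{ab}}$ is not topologically finitely generated when $H$ is non-perfect, and that self-similarity of a finitely generated $G$ genuinely bounds $d(G^{\mathrm{ab}})$ — one must be careful that self-similarity is used (not just density), since a finitely generated dense subgroup of $W_H$ certainly exists for $H$ non-perfect if we drop self-similarity is false too, so the real content is the interplay. The cleanest route is: (a) show $\overline{G}=W_H$ from the dimension hypothesis as above; (b) invoke that for a self-similar group the abelianization map factors through the wreath recursion to get $d(G^{\mathrm{ab}})\le d((G^{\mathrm{ab}})^m \rtimes G_1)$-type bounds forcing, upon density, $W_H^{\mathrm{ab}}$ finitely generated; (c) contradict (b) by exhibiting infinitely many independent homomorphisms $W_H\to H^{\mathrm{ab}}$ (one for each vertex of the tree, via the section at that vertex followed by the projection $H\to H^{\mathrm{ab}}$), which are $\mathbb{Z}$- (or $\mathbb{Z}_p$-) independent precisely because $H^{\mathrm{ab}}\ne 1$. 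The "In particular" for $\mathfrak{A}=W_{\mathrm{Sym}(m)}$ follows since $\mathrm{Sym}(m)$ is non-perfect for every $m\ge 2$.
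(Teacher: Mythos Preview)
Your overall strategy matches the paper's: show that Hausdorff dimension $1$ together with self-similarity forces $\overline{G}=W_H$, then conclude from the fact that $W_H$ is not topologically finitely generated when $H$ is non-perfect. Two corrections are needed, however.

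First, you have $r_1$ wrong: by definition $r_1=m\log|G_0|-\log|G_1|+\log|G_1|=0$, not $\log|G_1|$. With $s_n=0$ for all $n$ this gives $r_n\equiv 0$, hence $\log|G_n|=m\log|G_{n-1}|+\log|G_1|$, i.e. $|G_n|=|G_{n-1}|^m\cdot|H|$, which is exactly $|(W_H)_n|$; your formula $|G_n|=|G_{n-1}|^m$ is off by a factor of $|H|$ and does not match the order of an iterated wreath product. The paper reaches the same conclusion slightly differently: it interprets $r_n=0$ structurally (via \cref{align: rn and sn self-similar}) as $\psi(\mathrm{St}_G(1))=G\times\cdots\times G$, then shows directly that the closed group $G$ splits over its first level stabilizer, forcing $G=W_H$.

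Second, the final step is simpler than you make it, and self-similarity plays no role there. If $G$ is finitely generated and dense in $W_H$ then $W_H$ is topologically finitely generated; the paper simply cites Bondarenko's theorem that this happens iff $H$ is perfect. Your direct argument via abelianization is in the right spirit but the ``one homomorphism per vertex'' idea fails: $g\mapsto g|_v^1$ is not a homomorphism (labels compose with a cocycle twist). What works is one homomorphism $W_H\to H^{\mathrm{ab}}$ \emph{per level}, sending $g$ to the product of all labels at level $n$; these are independent and show $W_H^{\mathrm{ab}}$ surjects onto $(H^{\mathrm{ab}})^n$ for every $n$, hence is not topologically finitely generated when $H^{\mathrm{ab}}\ne 1$.
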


Next, we generalize iterated permutational wreath products as follows. We choose a subgroup $S_0\le \mathrm{Sym}(q)$ and we set $A_0:=S_0$. For every $n\ge 1$, instead of choosing the base group $S_0\times\overset{q^n}{\ldots}\times S_0$ and constructing its semidirect product with $A_{n-1}$ as we do for $W_{S_0}$, we consider an $A_{n-1}$-invariant subgroup $S_n$ and define $A_n:=S_n\rtimes A_{n-1}$. Then $G_\mathcal{S}:=\dotsb \rtimes S_1\rtimes S_0$, where $\mathcal{S}:=\{S_n\}_{n\ge 0}$ is the \textit{defining sequence} of the group $G_\mathcal{S}$.

We study the properties of the group $G_\mathcal{S}$ in terms of the defining sequence~$\mathcal{S}$ in \cref{section: generalized iterated wreath products}. In \cref{section: restricted Hausdorff spectra}, we show how the sequence $\mathcal{S}$ can be chosen so that $G_\mathcal{S}\le \Gamma_q$ and $S_{n}$ has index $q^{\gamma_n}$ in $S_{n-1}\times\overset{q}{\ldots}\times S_{n-1}$ for any prescribed sequence $\{\gamma_n\}_{n\ge 1}$ where $0\le \gamma_n\le q-1$ for all $n\ge 1$. By \cref{theorem: Formula Hausdorff dimension}, the generating function $S_{G_\mathcal{S}}(x)$ at $x=1/q$ gives precisely the $q$-base expansion of $1-\mathrm{hdim}_{\Gamma_q}(G_\mathcal{S})$ and we obtain a method to construct closed self-similar groups with prescribed Hausdorff dimension $\mathrm{hdim}_{\Gamma_q}(G_\mathcal{S})\in [0,1]$. Adding additional restrictions to the sequence $\{\gamma_n\}_{n\ge 1}$ we obtain (ii) and (iii) in \cref{Theorem: restricted Hausdorff spectra p-adic automorphisms}. For (iv) we use a slightly more involved construction for the defining sequence $\mathcal{S}$ but the underlying idea remains the same.

A natural question to ask at this point is how the above restricted Hausdorff spectra of $q$-adic automorphisms look like under the extra assumption of the closed subgroups being also topologically finitely generated.

\begin{Question}
\label{question: finitely generated spectra}
    What are the restricted Hausdorff spectra of the group of $q$-adic automorphisms in \cref{Theorem: restricted Hausdorff spectra p-adic automorphisms} if we add the topologically finitely generated condition?
\end{Question}

To the best of the author's knowledge, \cref{theorem: fg spectrum} is the only known general result on the finitely generated self-similar Hausdorff spectrum of the group $\Gamma_p$. Abért and Virág proved that three random elements of~$\Gamma_p$ generate topologically a closed subgroup of Hausdorff dimension 1 in $\Gamma_p$ with probability~1 \cite[Theorem~7.2]{AbertVirag}. \cref{theorem: fg spectrum} shows that for closed self-similar subgroups of $p$-adic automorphisms we get the completely opposite picture as there is not a single finitely generated closed self-similar subgroup of Hausdorff dimension 1 in $\Gamma_p$. Thus we expect quite a different result for the finitely generated case. The pro-$p$ groups $G_\mathcal{S}$ are not appropriate to answer \cref{question: finitely generated spectra} as they are never topologically finitely generated; see \cref{proposition: not finitely generated}. However, we believe \cref{theorem: Formula Hausdorff dimension} could potentially be useful in answering \cref{question: finitely generated spectra}.

The rich interplay between group theory, formal languages and complex and symbolic dynamics has been studied throughout the last decades; see \cite{Holom, MarialauraBartholdi, GrigorchukFinite, PenlandSunic1, PenlandSunic2, SunicHausdorff}. The tools developed in the proof of \cref{theorem: Formula Hausdorff dimension} provide further applications of self-similar groups to measure-preserving dynamical systems. In fact, in an ongoing work of the author, the sequence $\{r_n\}_{n\ge 1}$ plays a key role in generalizing Bowen's classification of Bernoulli shifts over free groups in \cite{BowenF} to Markov processes over free semigroups \cite{BowenMarkov} arising from fractal regular branch profinite groups. Further applications to arithmetic dynamics have also been obtained recently in an ongoing project of the author together with Radi.

Coming back to \cref{Conjecture: New horizons}, the zero-dimensional closed branch groups $G_\mathcal{S}$ suggest a way to disprove \cref{Conjecture: New horizons}. If we encode the action of all $S_n\in \mathcal{S}$ in a single automorphism we can obtain finite (topological) generation and we may potentially construct just infinite branch pro-$p$ groups with Hausdorff dimension zero in $\Gamma_p$. We show this can be done in a similar way to how the multi GGS-groups are constructed \cite{FG}, which yields for every prime $p$ a family of discrete groups acting on the $p$-adic tree such that each group is finitely generated, branch, just infinite and of zero-dimensional closure:

\begin{Theorem}
    \label{Theorem: just infinite trivial dimension}
    For any prime $p$ there exists a just infinite branch pro-$p$ group of $p$-adic automorphisms with zero Hausdorff dimension in $\Gamma_p$. 
\end{Theorem}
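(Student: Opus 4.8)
The plan is to realize a suitable group $G_{\mathcal{S}}$ of zero Hausdorff dimension as the closure of a finitely generated discrete group, in the style of the (multi-)GGS construction. First I would fix a defining sequence $\mathcal{S}=\{S_n\}_{n\ge 0}$ for which Theorem \ref{theorem: Formula Hausdorff dimension} (via Proposition \ref{proposition: self-similar and branching formula}, since $G_{\mathcal{S}}$ is branching) yields $\mathrm{hdim}_{\Gamma_p}(G_{\mathcal{S}})=0$; concretely, one needs $S_{G_{\mathcal{S}}}(1/p)=\log|S_0|=\log p$, i.e. the series of obstructions $\{s_n\}$ must encode the $p$-adic expansion of $1$, which forces $\gamma_n=p-1$ for all but finitely many $n$ — so the $S_n$ shrink as fast as possible inside $S_{n-1}\times\cdots\times S_{n-1}$. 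The point of Theorem \ref{Theorem: restricted Hausdorff spectra p-adic automorphisms}(iv) is that this can be arranged while keeping $G_{\mathcal{S}}$ (weakly regular) branch, hence the closure is branch as a profinite group.

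Next I would build a single automorphism $b\in\Gamma_p$ (together with the rooted automorphism $a$ of order $p$) that ``packages'' the actions of all the $S_n$: writing $b=(b_1,\dots,b_{p-1},b')$ on the first level with the $b_i$ finitary of bounded depth and $b'$ a shifted copy of $b$ (or a small tuple of such generators, exactly as for multi-GGS groups), so that the discrete group $G=\langle a,b\rangle$ satisfies $\overline{G}=G_{\mathcal{S}}$. Here one verifies that the level-$n$ quotients $G/\mathrm{St}_G(n)$ coincide with $A_n=S_n\rtimes A_{n-1}$; this is the combinatorial heart of the matching between the explicit generators and the prescribed sequence $\mathcal{S}$, and is where I expect to spend the most effort. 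Finite generation is then immediate, and branchness of the abstract group $G$ follows because $G_{\mathcal{S}}$ is branch and $G$ is dense (the rigid stabilizers of $G$ have finite index in those of $\overline{G}$).

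It then remains to show $G$ is just infinite. I would use the standard criterion: a branch group is just infinite once the abelianization $G/G'$ is finite and, more importantly, each rigid level stabilizer $\mathrm{Rist}_G(n)$ has finite abelianization — equivalently $\mathrm{Rist}_G(n)'$ has finite index in $\mathrm{Rist}_G(n)$ (Grigorchuk's theorem, see \cite{Handbook}). Because $G$ is finitely generated and the generators are of GGS type, $G/G'$ is visibly finite; for the rigid stabilizers one exploits the branching structure ($G_{\mathcal{S}}$ contains copies of itself) to reduce the statement at level $n$ to level $0$, and then checks the commutator condition directly from the action of the $S_n$ — here the condition $\gamma_n=p-1$ eventually (so the $S_n$ are ``large'' relative to the obstruction but not the full base group) has to be compatible with $\mathrm{Rist}_G(n)$ being perfect-modulo-finite. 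The main obstacle, as usual in this circle of ideas, is precisely the simultaneous satisfaction of three competing demands: zero Hausdorff dimension (which wants the $S_n$ small, forcing $\gamma_n$ large), branchness (which wants the rigid stabilizers large), and just infiniteness (which wants $\mathrm{Rist}_G(n)/\mathrm{Rist}_G(n)'$ finite); the construction of \cref{section: restricted Hausdorff spectra} for part (iv) is designed to thread this needle, and the bulk of the proof is checking that the finitely generated incarnation inherits all three properties.
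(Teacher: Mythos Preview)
Your plan contains a genuine gap at its very first step. You propose to realize a zero-dimensional $G_{\mathcal{S}}$ as the closure of a finitely generated discrete group $G$, so that $\overline{G}=G_{\mathcal{S}}$. But this is impossible: \cref{proposition: not finitely generated} shows that whenever all $S_n$ are $p$-groups, the profinite group $G_{\mathcal{S}}$ is either finite or \emph{not} topologically finitely generated. Hence no finitely generated discrete group can have $G_{\mathcal{S}}$ as its closure, and the strategy of ``packaging the $S_n$ into a single automorphism so that $\overline{G}=G_{\mathcal{S}}$'' cannot work as stated. The paper flags this explicitly in the introduction and at the start of \cref{section: branch groups}.

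A secondary issue is your claim that branchness of the abstract $G$ would follow automatically from branchness of $\overline{G}$ and density. This does not hold in general: an element of $G\cap\mathrm{Rist}_{\overline{G}}(n)$ decomposes as a product of elements supported on single subtrees, but those factors need not individually lie in $G$, so $\mathrm{Rist}_G(n)$ may be much smaller than $G\cap\mathrm{Rist}_{\overline{G}}(n)$.

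The paper's actual proof takes a rather different route. It abandons the $G_{\mathcal{S}}$ framework and builds, for $q\ge 5$, explicit non-self-similar groups $G_n=\langle A_n,b_n\rangle$ in the style of Fabrykowski--Gupta: $A_n$ is a finite elementary abelian group of finitary automorphisms, and $b_n$ is a ``staircase'' automorphism whose non-trivial labels run through wider and wider diagonal copies of $\sigma$, with the depths governed by a super-exponentially growing sequence $l_{n+1}=q^{l_n-1}$. Branchness and just infiniteness are proved directly via GGS-type commutator computations (\cref{lemma: G is branch}, \cref{lemma: Gn has finite abelianization}, \cref{proposition: G is just infinite branch}), and zero Hausdorff dimension is obtained not from \cref{theorem: Formula Hausdorff dimension} (which does not apply, since $G_n$ is not self-similar) but from a direct growth estimate: the congruence quotients of $G_n$ grow only like $(k+1)l_{n+k}$ against $q^{l_{n+k}}$ for $\Gamma_q$ (\cref{proposition: G has trivial Hausdorff dimension}). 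The pro-$p$ group in the theorem is then $\overline{G_n}$, whose branchness and just infiniteness follow from those of $G_n$.
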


 The construction in \cref{Theorem: just infinite trivial dimension} does not disprove yet \cref{Conjecture: New horizons} because one still needs to show that all the other branch actions of the groups yield zero-dimensional $p$-adic representations in $\Gamma_p$. Although we cannot prove this, it is our hope that the construction in \cref{Theorem: just infinite trivial dimension} does provide a counterexample to \cref{Conjecture: New horizons}.

\subsection*{\textit{\textmd{Organization}}} 
In \cref{section: Preliminaries} we give some background for the subsequent sections. In \cref{section: new tool for the Hausdorff dimension} we prove \cref{theorem: Formula Hausdorff dimension} and show some applications. Then in \cref{section: generalized iterated wreath products} we introduce a generalization of iterated wreath products and study their properties and in \cref{section: restricted Hausdorff spectra} we prove \cref{Theorem: restricted Hausdorff spectra p-adic automorphisms}. Later in \cref{section: branch groups} we construct just infinite branch groups whose closure has zero Hausdorff dimension in $\Gamma_p$ proving \cref{Theorem: just infinite trivial dimension}.

\subsection*{\textit{\textmd{Notation}}} All the logarithms will be taken in base $m$, where $m$ is the degree of the regular rooted tree. The number $p$ will denote a prime integer and $q$ a $p$-power. For a sequence $\{a_i\}_{i\in I}$ we denote by $\nabla \{a_i\}:=\{a_{i+1}-a_i\}_{i\in I}$ its forward gradient. We denote by $\sigma:=(1\,\dotsb \, m)\in \mathrm{Sym}(m)$ the $m$-cycle acting on the set $\{1,\dotsc,m\}$. Maps will be considered to act on the right and we write composition from left to right, i.e. $fg$ means first apply $f$ and then apply $g$. Lastly, the topology of any group acting on the $m$-adic tree is always assumed to be the \textit{congruence topology}, i.e. the one given by its level stabilizers.

\subsection*{Acknowledgements} I would like to thank my advisors Gustavo A. Fernández-Alcober and Anitha Thillaisundaram for their valuable feedback and helpful discussions which contributed greatly to the final version of this paper. I would also like to thank J.~Moritz Petschick for helpful discussions and for pointing out a gap in a previous version of this paper. Finally, let me thank the anonymous referee for their valuable feedback and suggestions.


\section{Background}
\label{section: Preliminaries}

\subsection{The $m$-adic tree and its group of automorphisms} We define the $m$-\textit{adic tree} as the infinite regular rooted tree where each vertex has exactly $m$ descendants, where $m\ge 2$. Each vertex can be identified with a finite word on letters in a set $X$ of $m$ elements. The set $X$ may be completely ordered, inducing a lexicographical order in the $m$-adic tree. Two vertices $u$ and~$v$ are joined by an edge if $u=vx$ with $x\in X$. In this case we say $u$ is \textit{below} $v$. The \textit{$n$th level of the tree} is defined to be the set of all vertices which consist of words of length $n$. We may also use the word level to refer to the number $n$. Let $\mathfrak{A}$ be the group of graph automorphisms of the $m$-adic tree, i.e. those bijective maps on the set of vertices of the tree preserving adjacency. Since we are considering regular rooted trees such automorphisms fix the root and permute the vertices at the same level of the tree. For any natural number $k\ge 1$, the \textit{$k$th truncated tree} consists of the vertices at distance at most $k$ from the root. We denote the group of automorphisms of the $k$th truncated tree by $\mathfrak{A}_{[k]}$.

Let $g\in\mathfrak{A}$. For any vertex $v$ and any integer $1\le k\le \infty$ we define the \textit{section of $g$ at $v$ of depth }$k$ as the unique automorphism $g|_v^k\in\mathfrak{A}_{[k]}$ such that $(vu)g=(v)g(u)g|_v^k$ for every $u$ of length at most $k$. For $k=\infty$ we write  $g|_v$ for simplicity and call it the \textit{section of $g$ at $v$}, while for $k=1$ we shall call $g|_v^1$ the \textit{label of $g$ at $v$}.

For every $n\ge 1$ the subset $\mathrm{St}(n)$ of automorphisms fixing all the vertices of the $n$th level of the tree forms a normal subgroup of finite index in $\mathfrak{A}$, the \textit{$n$th level stabilizer}. Similarly, for any vertex $v$ of the tree we define its \textit{vertex stabilizer} $\mathrm{st}(v)$ as the subgroup of automorphisms fixing the vertex $v$. The group $\mathfrak{A}$ is residually finite as $\bigcap \mathrm{St}(n)=1$ and a countably-based profinite group with respect to the topology induced by this filtration of the level stabilizers. For each vertex $v$ we shall define the continuous homomorphism $\psi_v:\mathrm{st}(v)\to \mathfrak{A}$ given by $g\mapsto g|_v$ and for each integer $n\ge 1$ we define the continuous isomorphisms $\psi_n:\mathrm{St}(n)\to \mathfrak{A}\times\overset{m^n}{\ldots}\times \mathfrak{A}$ given by $g\mapsto (g|_{v_1},\dotsc,g|_{v_{m^n}})$, where $v_1,\dotsc,v_{m^n}$ denote the vertices at the $n$th level of the tree from left to right according to the lexicographical order. We shall write $\psi:=\psi_1$ for simplicity.

Observe that $\mathfrak{A}=\mathrm{St}(n)\rtimes \mathfrak{A}_n$, where $\mathfrak{A}_n$ is the subgroup of \textit{finitary automorphisms of depth $n$}, i.e. automorphisms of the $m$-adic tree with trivial labels at the $k$th level of the tree for every $k\ge n$. This observation allows one to extend the isomorphisms $\psi_n:\mathrm{St}(n)\to \mathfrak{A}\times\overset{m^n}{\ldots}\times \mathfrak{A}$ to isomorphisms $\psi_n:\mathfrak{A}\to (\mathfrak{A}\times\overset{m^n}{\ldots}\times \mathfrak{A})\rtimes \mathfrak{A}_{n}$ which we shall also denote by $\psi_n$ by a slight abuse of notation. Finitary automorphisms of depth 1 are called \textit{rooted automorphisms} and they may be identified with elements in the symmetric group $\mathrm{Sym}(m)$. More generally, the group $\mathfrak{A}_n$ is isomorphic to the group of automorphisms of the $n$th truncated tree $\mathfrak{A}_{[n]}$ via the restriction of the action of $\mathfrak{A}$ to the $n$th truncated tree.

\subsection{Subgroups of $\mathfrak{A}$}
For a subgroup $G\le \mathfrak{A}$ we define $\mathrm{st}_G(v):=\mathrm{st}(v)\cap G$ and $\mathrm{St}_G(n):=\mathrm{St}(n)\cap G$ for any vertex $v$ and level $n\ge 1$ respectively. The quotients $G_n:=G/\mathrm{St}_G(n)$ are the \textit{congruence quotients} of $G$.

Let $G$ be a subgroup of $\mathfrak{A}$. We say that $G$ is \textit{self-similar} if for any $g\in G$ we have $g|_{v}\in G$ for any vertex $v$ and \textit{weakly self-similar} if $g|_v\in G$ for $g\in \mathrm{St}_G(1)$ and any vertex $v$. We shall say that $G$ is \textit{fractal} (or \textit{strongly fractal}) if $G$ is self-similar and $\psi_v(\mathrm{st}_G(v))=G$ (respectively $\psi_v(\mathrm{St}_G(1))=G$) for any $v$ at the first level of the tree. An even stronger version of fractality is that of \textit{super strongly fractal} groups, where $\psi_v(\mathrm{St}_G(n))=G$ for every vertex $v$ at the $n$th level of the tree for every level $n\ge 1$. We also introduce a weaker version of fractality too. We say that $G$ is \textit{`virtually' fractal} if $G$ is weakly self-similar and  $\psi_v(\mathrm{St}_G(1))$ is of finite index in $G$ for any $v$ at the first level of the tree. Note that we write `virtually' because our definition differs slightly from the standard notion of virtually $\mathcal{P}$ for some property~$\mathcal{P}$. We have the following chain of implications
$$\text{super strongly fractal}\implies \text{strongly fractal}\implies \text{fractal}\implies \text{`virtually' fractal}.$$

We say that $G$ is \textit{level-transitive} if $G$ acts transitively on every level of the tree. For any vertex $v$ let $\mathrm{rist}_G(v)$ be the associated \textit{rigid vertex stabilizer}, i.e. the subgroup of $\mathrm{st}_G(v)$ consisting of automorphisms $g\in \mathrm{st}_G(v)$ such that for any vertex $w$ which is not below $v$ we have $g|_w^1=1$. Then for any $n\ge 1$ we define the corresponding \textit{rigid level stabilizers} $\mathrm{Rist}_G(n)$ as the product of all rigid vertex stabilizers of the vertices at the level~$n$. Rigid level stabilizers are normal subgroups of $G$ as any conjugate of a rigid vertex stabilizer is a rigid vertex stabilizer of the same level. We say that $G$ is \textit{weakly branch} if $G$ is level-transitive and its rigid level stabilizers are non-trivial for every level, and \textit{branch} if they are furthermore of finite index in $G$. The stronger notions of \textit{weakly regular branch} and \textit{regular branch} are defined as $G$ being a self-similar, level-transitive group containing a non-trivial (respectively finite-index) branching subgroup $K$, where a \textit{branching group} $K$ is a subgroup of $\mathfrak{A}$ such that $\psi(\mathrm{St}_K(1))\ge K\times\dotsb\times K$.

\subsection{Just infinite groups}
A \textit{just infinite discrete} group is an infinite discrete group such that all its proper quotients are finite. We define \textit{just infinite profinite} groups as those infinite profinite groups whose normal non-trivial closed subgroups are open, i.e. infinite profinite groups whose continuous proper quotients are all finite.

\subsection{Hausdorff dimension}
Countably-based profinite groups are metrizable which allows the definition of Hausdorff measures and therefore a Hausdorff dimension for its subsets. It was proved by Barnea and Shalev in \cite{BarneaShalev} based on the work of Abercrombie \cite{Abercrombie} that for a profinite group the Hausdorff dimension of its closed subgroups coincides with their lower box dimension. For the group $\mathfrak{A}$ the most natural choice for the metric is the one induced by the level stabilizers and in this context the formula for the Hausdorff dimension of a closed subgroup $G\le \mathfrak{A}$ reads as
$$\mathrm{hdim}_\mathfrak{A}(G)=\liminf_{n\to \infty}\frac{\log|G_n|}{\log|\mathfrak{A}_n|}.$$
For a subgroup $H\le \mathrm{Sym}(m)$, we define the \textit{iterated permutational wreath product} $W_H\le \mathfrak{A}$ as $W_H:=\{g\in \mathfrak{A}\mid g|_v^1\in H \text{ for every }v\}$. Note that an alternative construction of $W_H$ is $W_H=\varprojlim H\wr \overset{n}{\dotsb}\wr H$ and that the groups $\mathfrak{A}$ and $\Gamma_q$ correspond respectively to $W_{\mathrm{Sym}(m)}$ and $W_{\langle \sigma\rangle}$ where $\sigma:=(1\,2\,\dotsb \, q)\in \mathrm{Sym}(q)$. 

For a group $G\le W_H$ the Hausdorff dimension of $G$ may also be computed relative to $W_H$ as
\begin{align}
\label{align: hausdorff dimension in WH}
    \mathrm{hdim}_{W_H}(G)&=\liminf_{n\to \infty}\frac{\log|G_n|}{\log|(W_H)_n|}=\frac{1}{\mathrm{hdim}_\mathfrak{A}(W_H)}\mathrm{hdim}_\mathfrak{A}(G)\\
    &=\frac{\log m!}{\log |H|}\mathrm{hdim}_\mathfrak{A}(G).\nonumber
\end{align}

Let $\mathcal{P}$ be a property that may be satisfied by the closed subgroups of a profinite group. We define the \textit{$\mathcal{P}$ Hausdorff spectrum} of a profinite group as the set of the Hausdorff dimensions of its closed subgroups that satisfy property $\mathcal{P}$. We shall study the $\mathcal{P}$ Hausdorff spectra of the group $\Gamma_q$ where $\mathcal{P}$ is one of the different properties appearing in the statement of \cref{Theorem: restricted Hausdorff spectra p-adic automorphisms}.

\section{Hausdorff dimension formula and applications}
\label{section: new tool for the Hausdorff dimension}

Petschick and Rajeev introduced a series of obstructions in \cite{GeneralizedBasilica} in order to compute the Hausdorff dimension of groups obtained via the so-called Basilica operation. Their definition of this sequence is rather technical and it seems useful due to its nice behaviour with respect to the Basilica operation \cite[Proposition 4.16]{GeneralizedBasilica}. However, this sequence arises naturally in the study of the Hausdorff dimension of weakly self-similar groups. 

\subsection{Definition of the sequences}
Let $G\le \mathfrak{A}$. Recall the sequence $\{r_n(G)\}_{n\ge 1}$ is defined as
$$r_n(G):=m\log|G_{n-1}|-\log|G_{n}|+\log|G_1|.$$
Note that $r_1(G)=0$ for any $G\le \mathfrak{A}$. The sequence $\{s_n(G)\}_{n\ge 1}$ is defined as
$$s_n(G):=r_{n+1}(G)-r_n(G)=m\log|\mathrm{St}_G(n-1):\mathrm{St}_G(n)|-\log|\mathrm{St}_G(n):\mathrm{St}_G(n+1)|.$$
Note $\{s_n(G)\}_{n\ge 1}$ coincides with the sequence introduced by Petschick and Rajeev in \cite{GeneralizedBasilica}. In general we omit the reference to the group $G$ when there is no room for confusion and we simply write $r_n$ and $s_n$ for the terms of these sequences. Note that both sequences $\{r_n\}_{n\ge 1}$ and $\{s_n\}_{n\ge 1}$ are invariant under taking topological closures as they only depend on the congruence quotients and
$$\frac{\overline{G}}{\mathrm{St}_{\overline{G}}(n)}\cong \frac{G}{\mathrm{St}_G(n)}$$
for every $n\ge 1$ by a standard result on profinite groups \cite[Proposition 3.2.2(d)]{RibesZalesskii}. Therefore these sequences do not distinguish between a group and its closure.

 We shall see that the key is to study the sequences $\{r_n\}_{n\ge 1}$ and $\{s_n\}_{n\ge 1}$ via their ordinary generating functions
$$R_G(x):=\sum_{n=1}^\infty r_nx^n\quad\text{and}\quad S_G(x):=\sum_{n=1}^\infty s_nx^n.$$ 

\subsection{Motivation and main cases}
\label{subsection: motivation and main cases}

Now recall \v{S}uni\'{c}'s formula \cite[Theorem 4]{SunicHausdorff} to compute the Hausdorff dimension of a closed regular branch group $G$ branching over its $k$th level stabilizer:
\begin{align*}
    \mathrm{hdim}_{W_H}(G)=\frac{(m-1)\log|G_k|-\log|G\times\dotsb\times G:\psi(\mathrm{St}_G(1))|+\log|G_1|}{m^k}.
\end{align*}
Therefore, in order to compute the Hausdorff dimension of a closed regular branch group~$G$ it is enough to understand what happens above $\psi(\mathrm{St}_G(1))$. Then we define the subgroups $$\mathfrak{S}_n:=\psi(\mathrm{St}_G(1))\big(\mathrm{St}_G(n-1)\times\overset{m}{\dotsb}\times\mathrm{St}_G(n-1)\big).$$
These are natural subgroups to consider as for a closed weakly self-similar group $G$ the image $\psi(\mathrm{St}_G(1))$ is closed in $G\times\overset{m}{\ldots}\times G$ so the subgroups $\mathfrak{S}_n$ give a natural filtration 
$$G\times\overset{m}{\dotsb}\times G= \mathfrak{S}_1\ge\dotsb\ge \mathfrak{S}_n\ge \dotsb\ge \bigcap_{n\ge 1} \mathfrak{S}_n=\overline{\psi(\mathrm{St}_G(1))}=\psi(\mathrm{St}_G(1)).$$

The sequences $\{r_n\}_{n\ge 1}$ and $\{s_n\}_{n\ge 1}$ have a very natural definition in terms of the subgroups $\mathfrak{S}_n$ simply as
\begin{align}
\label{align: rn and sn self-similar}
    r_n=\log |G\times\overset{m}{\dotsb}\times G: \mathfrak{S}_{n}|\quad\text{and}\quad s_n=\log|\mathfrak{S}_{n}:\mathfrak{S}_{n+1}|.
\end{align}
This equivalent definition for weakly self-similar groups follows directly from the equality
$$\psi(\mathrm{St}_G(n))=\psi(\mathrm{St}_G(1))\cap\big(\mathrm{St}_G(n-1)\times\overset{m}{\dotsb}\times \mathrm{St}_G(n-1)\big)$$
and the Second Isomorphism Theorem, see \cref{fig:my_label} below.

\begin{figure}[H]
    \centering
    \begin{tikzpicture}
    \node (stn) at (0,0) {$\mathrm{St}_G(n)$};
    \node (st1) at (0,1) {$\mathrm{St}_G(1)$};
    \node (G) at (0,2) {$G$};
    \node (psi1) at (3,1) {$\psi(\mathrm{St}_G(1))$};
    \node (psin) at (5,0) {$\psi(\mathrm{St}_G(n))$};
    \node (dirn-1) at (8,1) {$\mathrm{St}_G(n-1)\times\overset{m}{\dotsb}\times\mathrm{St}_G(n-1)$};
    \node (Sn) at (5,2) {$\mathfrak{S}_n$};
    \node (dirG) at (5,3) {$G\times\overset{m}{\dotsb}\times G$};
    \draw [-] (G) edge node[swap] {} (st1);
    \draw [-] (st1) edge node[swap] {} (stn);
    \draw [-] (dirG) edge node[swap] {} (Sn);
    \draw [-] (Sn) edge node[swap] {} (psi1);
    \draw [-] (Sn) edge node[swap] {} (dirn-1);
    \draw [-] (psi1) edge node[swap] {} (psin);
    \draw [-] (dirn-1) edge node[swap] {} (psin);
    \draw [->] (st1) edge node[swap] {} (psi1);
    \draw [->] (stn) edge node[swap] {} (psin);
    \draw[<->, color=red] (dirG.east) to[out=0,in=90,looseness=1] (dirn-1.north);
    \draw[<->, color=teal] (st1.west) to[out=180,in=180,looseness=1] (stn.west);
    \draw[<->, color=teal] (psi1.east) to[out=0,in=90,looseness=1] (psin.north);
    \draw[<->, color=teal] (Sn.east) to[out=0,in=140,looseness=1] (dirn-1.110);
    \draw[<->, color=violet] (dirG.200) to[out=220,in=180,looseness=1] (Sn.west);
    \node (index1) at (-1.4,0.5) {\textcolor{teal}{$\frac{|G_n|}{|G_1|}$}};
    \node (index2) at (8.3,2.7) {\textcolor{red}{$|G_{n-1}|^m$}};
    \node (rn) at (3.9,2.1) {\textcolor{violet}{$m^{r_n}$}};
    \node (psi1label) at (1.4,1.2) {$\psi$};
    \node (psi1label2) at (1.4,0.2) {$\psi$};
    \end{tikzpicture}
    \caption{The definition of $r_n$ for a weakly self-similar group in terms of the subgroup $\mathfrak{S}_n$, where the same colours represent the same indices.}
    \label{fig:my_label}
\end{figure}
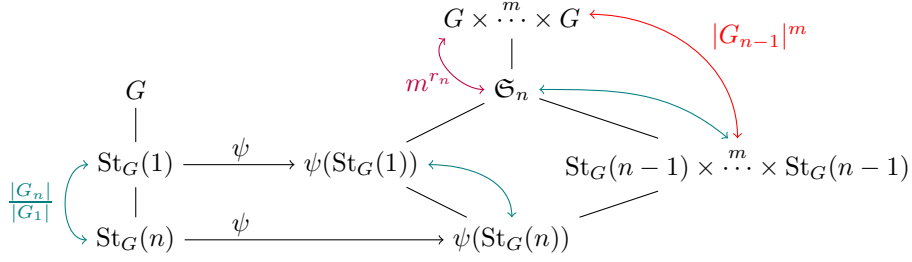

Similarly, for a branching group $K\le \mathfrak{A}$ we obtain an equivalent definition for both sequences $\{r_n\}_{n\ge 1}$ and $\{s_n\}_{n\ge 1}$ which resembles the one above for weakly self-similar groups. We define for every $n\ge 1$ the subgroup
$$\mathfrak{B}_n:=\psi(\mathrm{St}_K(n))\big(K\times\overset{m}{\dotsb}\times K\big)$$
and then we have
\begin{align}
\label{align: rn and sn branching}
    r_n=-\log |\mathrm{St}_K(1) : \mathfrak{B}_{n}|\quad\text{and}\quad s_n=-\log|\mathfrak{B}_{n}:\mathfrak{B}_{n+1}|.
\end{align}
Again these alternative definitions of the sequences $\{r_n\}_{n\ge 1}$ and $\{s_n\}_{n\ge 1}$ follow from the equality
$$\mathrm{St}_K(n-1)\times\overset{m}{\dotsb}\times \mathrm{St}_K(n-1)=\psi(\mathrm{St}_K(n))\cap \big(K\times\overset{m}{\dotsb}\times K\big)$$
and the Second Isomorphism Theorem, see \cref{fig: branching} below.

\begin{figure}[H]
    \centering
    \begin{tikzpicture}
    \node (stn) at (0,0) {$\mathrm{St}_K(n)$};
    \node (st1) at (0,2) {$\mathrm{St}_K(1)$};
    \node (G) at (0,3) {$K$};
    \node (psi1) at (3,2) {$\psi(\mathrm{St}_K(1))$};
    \node (psin) at (3,0) {$\psi(\mathrm{St}_K(n))$};
    \node (dirn-1) at (8,0) {$K\times\overset{m}{\dotsb}\times K$};
    \node (Sn) at (5,1) {$\mathfrak{B}_n$};
    \node (dirn) at (5,-1) {$\mathrm{St}_K(n-1)\times\overset{m}{\dotsb}\times \mathrm{St}_K(n-1)$};
    \draw [-] (G) edge node[swap] {} (st1);
    \draw [-] (st1) edge node[swap] {} (stn);
    \draw [-] (Sn) edge node[swap] {} (psi1);
    \draw [-] (Sn) edge node[swap] {} (psin);
    \draw [-] (Sn) edge node[swap] {} (dirn-1);
    \draw [-] (psin) edge node[swap] {} (dirn);
    \draw [-] (dirn) edge node[swap] {} (dirn-1);
    \draw [->] (st1) edge node[swap] {} (psi1);
    \draw [->] (stn) edge node[swap] {} (psin);
    \draw[<->, color=red] (Sn.310) to[out=-45,in=0,looseness=1] (psin.east);
    \draw[<->, color=red] (dirn-1.330) to[out=-45,in=0,looseness=1] (dirn.east);
    \draw[<->, color=teal] (st1.west) to[out=180,in=180,looseness=1] (stn.west);
    \draw[<->, color=teal] (psi1.200) to[out=200,in=160,looseness=1] (psin.160);
    \draw[<->, color=violet] (psi1.0) to[out=0,in=90,looseness=1] (Sn.north);
    \node (index1) at (-1.7,1) {\textcolor{teal}{$\frac{|K_n|}{|K_1|}$}};
    \node (index2) at (9.1,-1) {\textcolor{red}{$|K_{n-1}|^m$}};
    \node (rn) at (5.2,2) {\textcolor{violet}{$m^{-r_n}$}};
    \node (psi1label) at (1.4,2.2) {$\psi$};
    \node (psi1label2) at (1.4,0.2) {$\psi$};
    \end{tikzpicture}
    \caption{The definition of $r_n$ for a branching group in terms of the subgroup $\mathfrak{B}_n$, where the same colours represent the same indices.}
    \label{fig: branching}
\end{figure}
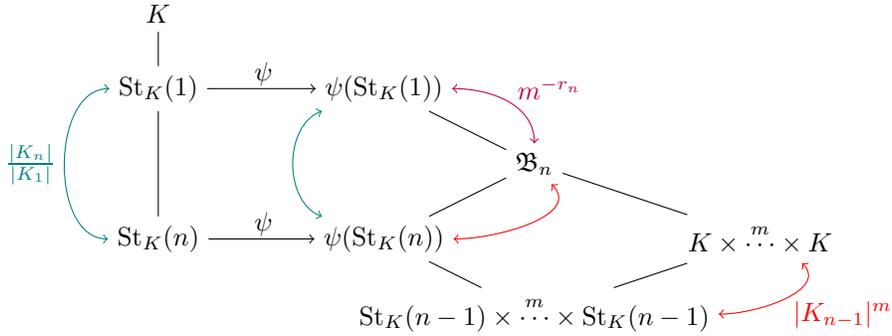

Thus for weakly self-similar groups the sequence $\{r_n\}_{n\ge 1}$ is non-negative and increasing by \cref{align: rn and sn self-similar}, while for branching groups  the sequence $\{r_n\}_{n\ge 1}$ is non-positive and decreasing by \cref{align: rn and sn branching}. This proves \cref{proposition: self-similar and branching formula}.

\subsection{Proof of \cref{theorem: Formula Hausdorff dimension}}

We first prove that the closure of a group $G$ satisfying the assumptions in \cref{theorem: Formula Hausdorff dimension} has strong Hausdorff dimension. We use monotone convergence to establish the existence of this limit. Then we show that this limit may be given in terms of the value of the generating function $S_G(x)$ at $x=1/m$.

\begin{lemma}
\label{lemma: formula in terms of rn of the congruence quotients orders}
    For $G\le\mathfrak{A}$ we have
    $$\log|G_n|=\sum_{i=1}^n(\log|G_1|-r_i)m^{n-i}=\left(\frac{m^n-1}{m-1}\right)\log|G_1|-\sum_{i=1}^n r_i m^{n-i}$$
    for all $n\ge 1$.
\end{lemma}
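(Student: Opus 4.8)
The plan is to recognise the defining identity for $r_n$ as a first-order linear recurrence satisfied by the quantity $\log|G_n|$, and then solve that recurrence explicitly. First I would record the base case: the zeroth level of the tree consists only of the root, so $\mathrm{St}_G(0)=G$ and hence $G_0$ is trivial, giving $\log|G_0|=0$; this is also exactly what forces $r_1=0$. Rearranging the definition $r_n=m\log|G_{n-1}|-\log|G_n|+\log|G_1|$ then yields, for every $n\ge 1$,
$$\log|G_n|=m\log|G_{n-1}|+\big(\log|G_1|-r_n\big).$$

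Next I would solve this recurrence. Writing $a_n:=\log|G_n|$ and $c_n:=\log|G_1|-r_n$, the relation reads $a_n=m\,a_{n-1}+c_n$ with $a_0=0$. Dividing by $m^n$ gives $a_n/m^n=a_{n-1}/m^{n-1}+c_n/m^n$, and telescoping this from $i=1$ to $i=n$ (using $a_0=0$) produces $a_n/m^n=\sum_{i=1}^n c_i/m^i$, that is,
$$\log|G_n|=\sum_{i=1}^n\big(\log|G_1|-r_i\big)m^{n-i}.$$
Equivalently, one can verify this formula directly by induction on $n$, the inductive step being an immediate application of the displayed recurrence. This gives the first claimed equality.

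Finally, to obtain the second form I would split the sum and evaluate the resulting geometric series: $\sum_{i=1}^n m^{n-i}=1+m+\dots+m^{n-1}=(m^n-1)/(m-1)$, so that $\sum_{i=1}^n \log|G_1|\,m^{n-i}=\big((m^n-1)/(m-1)\big)\log|G_1|$, while the remaining terms contribute $-\sum_{i=1}^n r_i m^{n-i}$. Combining the two pieces yields the second displayed expression.

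There is essentially no serious obstacle here: the statement is a purely formal consequence of the definition of $\{r_n\}_{n\ge 1}$. The only points requiring a little care are the correct identification of the zeroth congruence quotient, so that the recurrence is seeded with $a_0=0$ (and consistently $r_1=0$), and keeping track of the index shift in the geometric sum. Since $\{r_n\}_{n\ge1}$ depends only on the congruence quotients $G_n$, the identity holds verbatim with $G$ replaced by $\overline{G}$, which is the form in which it will be used in the proof of \cref{theorem: Formula Hausdorff dimension}.
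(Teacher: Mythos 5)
Your proposal is correct and follows essentially the same route as the paper: the paper rearranges the definition of $r_n$ into the recurrence $\log|G_n|=m\log|G_{n-1}|+(\log|G_1|-r_n)$ and proves the first equality by induction on $n$ (your telescoping argument is just the closed-form solution of that same induction), then obtains the second equality by rearranging terms, exactly as you do via the geometric sum.
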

\begin{proof}
    For $n=1$ the result is clear as $r_1=0$. Since $$\log|G_n|=m\log|G_{n-1}|+(\log|G_1|-r_n),$$
    the first equality follows by induction on $n\ge 1$. The second equality follows directly from the first one by rearranging terms.
\end{proof}

A similar version of the above result appeared in the proof of \cite[Lemma 4.13]{GeneralizedBasilica} for the corresponding series of obstructions.

Now we add the condition that the terms $r_n$ are all of equal sign. This is a natural assumption as self-similar groups satisfy this condition, as we saw at the beginning of this section, and also because groups which are known not to have strong Hausdorff dimension do not fulfill this condition; see the spinal groups in \cite{SiegenthalerHausdorff} for examples.

For every $n\ge 1$, let us set
$$L_n:=\frac{1}{m^n}\sum_{i=1}^nr_im^{n-i}=\sum_{i=1}^{n}\frac{r_i}{m^i}.$$
The next lemma shows the sequence $\{L_n\}_{n\ge 1}$ has a limit  $L$ which allows an immediate calculation of the Hausdorff dimension of the closure of the group $G$.

\begin{lemma}
    \label{lemma: monotonic sequence}
    Assume $G\le\mathfrak{A}$ is such that all the terms of its sequence $\{r_n\}_{n\ge 1}$ are of equal sign meaning they are either all non-negative or all non-positive. Then the limit
    $$L=\lim_{n\to\infty} L_n$$
    exists. Furthermore, the closure of $G$ has strong Hausdorff dimension in $\mathfrak{A}$ given by 
    $$\mathrm{hdim}_{\mathfrak{A}}(\overline{G})=\frac{1}{\log m!}\left(\log|G_1|-(m-1)L\right).$$
\end{lemma}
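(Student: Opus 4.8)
The plan is to first establish that $\{L_n\}_{n\ge 1}$ converges by a monotone-bounded argument. Writing $L_n = \sum_{i=1}^n r_i/m^i$, I observe that $L_{n+1} - L_n = r_{n+1}/m^{n+1}$, so the sequence of partial sums is monotone: non-decreasing if all $r_n \ge 0$, non-increasing if all $r_n \le 0$. It remains to bound it. The natural bound comes from \cref{lemma: formula in terms of rn of the congruence quotients orders}, which gives
$$\log|G_n| = \left(\frac{m^n-1}{m-1}\right)\log|G_1| - \sum_{i=1}^n r_i m^{n-i} = \left(\frac{m^n-1}{m-1}\right)\log|G_1| - m^n L_n.$$
Since $0 \le \log|G_n| \le \log|\mathfrak{A}_n|$ and $\log|\mathfrak{A}_n| = \frac{m^n-1}{m-1}\log m!$, dividing by $m^n$ yields
$$\frac{1}{m^n}\cdot\frac{m^n-1}{m-1}\bigl(\log|G_1| - \log m!\bigr) \le L_n \le \frac{1}{m^n}\cdot\frac{m^n-1}{m-1}\log|G_1|,$$
so $\{L_n\}$ is bounded (the bounds tend to $\frac{1}{m-1}(\log|G_1|-\log m!)$ and $\frac{1}{m-1}\log|G_1|$). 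A monotone bounded sequence of reals converges, so $L := \lim_n L_n$ exists.

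Next I would compute the Hausdorff dimension. From the displayed identity for $\log|G_n|$ above, divide by $\log|\mathfrak{A}_n| = \frac{m^n-1}{m-1}\log m!$ to get
$$\frac{\log|G_n|}{\log|\mathfrak{A}_n|} = \frac{1}{\log m!}\left(\log|G_1| - \frac{m^n}{(m^n-1)/(m-1)}\,L_n\right) = \frac{1}{\log m!}\left(\log|G_1| - \frac{(m-1)m^n}{m^n-1}\,L_n\right).$$
As $n\to\infty$ the factor $\frac{(m-1)m^n}{m^n-1} \to m-1$ and $L_n \to L$, so the right-hand side converges to $\frac{1}{\log m!}\bigl(\log|G_1| - (m-1)L\bigr)$. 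Since the limit exists, the $\liminf$ defining $\mathrm{hdim}_{\mathfrak{A}}(\overline{G})$ (recall the sequences, hence the congruence quotient orders, are unchanged by passing to the closure) equals this value, which is precisely a proper limit — i.e. $\overline{G}$ has strong Hausdorff dimension — establishing the claimed formula.

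The only mildly delicate point is making sure the error factor $\frac{(m-1)m^n}{m^n-1}$ is handled correctly when combined with $L_n \to L$: one should note $\frac{(m-1)m^n}{m^n-1}L_n - (m-1)L = (m-1)(L_n - L) + \left(\frac{(m-1)m^n}{m^n-1} - (m-1)\right)L_n$, and both terms vanish since $L_n\to L$ and the bracket tends to $0$ while $\{L_n\}$ is bounded. Everything else is bookkeeping with \cref{lemma: formula in terms of rn of the congruence quotients orders} and the explicit value $|\mathfrak{A}_n| = (m!)^{(m^n-1)/(m-1)}$. I do not anticipate a genuine obstacle here; the substance of the lemma is the observation that equal sign of the $r_n$ forces monotonicity of the partial sums $L_n$, which is what upgrades a $\liminf$ to an honest limit.
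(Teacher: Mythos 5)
Your proposal is correct and follows essentially the same route as the paper: both use \cref{lemma: formula in terms of rn of the congruence quotients orders} together with the sandwich $0\le \log|G_n|\le \log|\mathfrak{A}_n|$ to bound $L_n$, invoke monotonicity of the partial sums (forced by the equal-sign hypothesis) plus boundedness to get convergence, and then read off the dimension formula from the same rearranged identity. The only cosmetic difference is that the paper phrases the boundedness via uniform constants $\frac{\log|G_1|-\log m!}{m}\le L_n\le \frac{\log|G_1|}{m-1}$ rather than your $n$-dependent bounds, which is immaterial.
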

\begin{proof}
Note that since $G_n$ is a subgroup of $\mathfrak{A}_n$ we have $0\le \log|G_n|/\log|\mathfrak{A}_n|\le 1$ for every $n\ge 1$. Applying the formula in \cref{lemma: formula in terms of rn of the congruence quotients orders} we obtain the inequalities
    \begin{align}
    \label{align: density expression}
    0\le \frac{\log|G_n|}{\log|\mathfrak{A}_n|}&=\frac{\left(\frac{m^n-1}{m-1}\right)\log|G_1|-\sum_{i=1}^n r_i m^{n-i}}{\left(\frac{m^n-1}{m-1}\right)\log m!}\nonumber\\
    &=\frac{1}{\log m!}\left(\log|G_1|-(m-1)\left(\frac{m^n}{m^n-1}\right)L_n\right)\le 1.
\end{align}
Hence the formula for the Hausdorff dimension will be clear once we establish the convergence of the sequence $\{L_n\}_{n\ge 1}$. We have $(m-1)/m\le (m^n-1)/m^n\le 1$ for all $n\ge 1$. Then, applying these bounds and rearranging terms in \textcolor{teal}{(}\ref{align: density expression}\textcolor{teal}{)} we obtain the inequalities
$$\frac{\log|G_1|-\log m!}{m}\le L_n\le \frac{\log|G_1|}{m-1},$$
i.e. the terms $L_n$ are all bounded for $n\ge 1$. Without loss of generality let us assume $r_n\ge 0$ for all $n\ge 1$; otherwise replace $r_n$ with $-r_n$ for all $n\ge 1$ and work with $-L_n$ instead of $L_n$. Then $L_n\ge 0$ and it is increasing as
\begin{align*}
    L_{n+1}&=\sum_{i=1}^{n+1}\frac{r_i}{m^i}=\frac{r_{n+1}}{m^{n+1}}+L_n\ge L_n
\end{align*}
since $r_{n}\ge 0$ for all $n\ge 1$. Thus the sequence $\{L_n\}_{n\ge 1}$ is increasing and bounded and by the Monotone Convergence Theorem it has a limit $L$.
\end{proof}

To conclude the proof of \cref{theorem: Formula Hausdorff dimension} we need to establish that $S_G(1/m)$ is well defined and that it coincides with $(m-1)L$.

\begin{lemma}
\label{lemma: convergence of the Hausdorff series}
    Assume $G\le\mathfrak{A}$ is such that all the terms of the sequence $\{r_n\}_{n\ge 1}$ are of equal sign. Then the power series $S_G(x)$ converges at $x=1/m$ and this value satisfies $S_G(1/m)=(m-1)L$. 
\end{lemma}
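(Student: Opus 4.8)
The goal is to show that $S_G(x) = \sum_{n\ge 1} s_n x^n$ converges at $x = 1/m$ and equals $(m-1)L$, where $L = \lim_n L_n = \sum_{i\ge 1} r_i/m^i$ (whose existence is guaranteed by \cref{lemma: monotonic sequence}). First I would recall that $s_n = r_{n+1} - r_n$ by definition, so the partial sums of $S_G$ telescope in a weighted sense: applying Abel summation (summation by parts) to $\sum_{n=1}^N s_n x^n = \sum_{n=1}^N (r_{n+1}-r_n)x^n$ should produce, after rearrangement,
\begin{equation*}
\sum_{n=1}^N s_n x^n = r_{N+1}x^N - r_1 x + (1 - \tfrac{1}{x})\sum_{n=2}^{N} r_n x^n \cdot(\text{some bookkeeping}),
\end{equation*}
but the cleanest route is to substitute $x = 1/m$ directly and track $\sum_{n=1}^N (r_{n+1}-r_n)m^{-n}$. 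Writing this out and shifting indices gives
\begin{equation*}
\sum_{n=1}^N \frac{r_{n+1}-r_n}{m^n} = \sum_{k=2}^{N+1}\frac{r_k}{m^{k-1}} - \sum_{n=1}^N \frac{r_n}{m^n} = m\sum_{k=2}^{N+1}\frac{r_k}{m^k} - \sum_{n=1}^N\frac{r_n}{m^n} = (m-1)\sum_{n=1}^N \frac{r_n}{m^n} + \frac{r_{N+1}}{m^N} - m\cdot\frac{r_1}{m},
\end{equation*}
and since $r_1 = 0$ this is exactly $(m-1)L_N + r_{N+1}m^{-N}$.

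**Finishing the argument.** Thus it remains only to show $r_{N+1}m^{-N} \to 0$ as $N\to\infty$; then the partial sums of $S_G(1/m)$ converge to $(m-1)\lim_N L_N = (m-1)L$, which simultaneously establishes convergence and the claimed value. To control $r_{N+1}m^{-N}$ I would use \cref{lemma: formula in terms of rn of the congruence quotients orders}, which gives $\log|G_N| = \left(\frac{m^N-1}{m-1}\right)\log|G_1| - \sum_{i=1}^N r_i m^{N-i}$ together with the trivial bound $0 \le \log|G_N| \le \log|\mathfrak{A}_N| = \left(\frac{m^N-1}{m-1}\right)\log m!$. Since all $r_i$ share a sign, $\left|\sum_{i=1}^N r_i m^{N-i}\right| = \sum_{i=1}^N |r_i| m^{N-i} \ge |r_N|$ (taking only the $i=N$ term), and the two-sided bound on $\log|G_N|$ forces $\sum_{i=1}^N |r_i| m^{N-i}$ to be bounded by a constant multiple of $m^N$ — more precisely, $\frac{1}{m^N}\sum_{i=1}^N |r_i| m^{N-i} = |L_N|$ is bounded, as already shown in the proof of \cref{lemma: monotonic sequence}. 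But I actually need $r_N/m^N \to 0$, not merely boundedness of $L_N$: this follows because $|r_N|/m^N = |L_N - L_{N-1}|$ (using $L_N = L_{N-1} + r_N/m^N$), and $\{L_N\}$ converges, so consecutive differences tend to $0$. Hence $r_{N+1}/m^N = m\cdot r_{N+1}/m^{N+1} = m(L_{N+1}-L_N) \to 0$.

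**Main obstacle.** The argument is essentially a bookkeeping exercise once one spots that $r_{N+1}/m^N = m(L_{N+1}-L_N)$ and invokes convergence of $\{L_N\}$ from the previous lemma; there is no genuine analytic difficulty. The only point requiring care is the index-shifting in the Abel/telescoping computation — getting the boundary term $r_{N+1}m^{-N}$ and the coefficient $(m-1)$ exactly right, and making sure the $r_1 = 0$ simplification is used rather than assumed away prematurely. I would present the telescoping identity $\sum_{n=1}^N s_n m^{-n} = (m-1)L_N + r_{N+1}m^{-N}$ as a standalone display, verify it by the index shift above, and then take $N\to\infty$, citing \cref{lemma: monotonic sequence} for both the existence of $L$ and (via the telescoping relation $r_{N+1}/m^N = m(L_{N+1}-L_N)$) the vanishing of the boundary term.
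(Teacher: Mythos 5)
Your proof is correct and takes essentially the same route as the paper: the paper derives the identity $xS_G(x)=(1-x)R_G(x)$ from $s_n=r_{n+1}-r_n$ and $r_1=0$ and evaluates at $x=1/m$, using the convergence of $R_G(1/m)=L$ supplied by \cref{lemma: monotonic sequence}, which is precisely your telescoping computation carried out at the level of the full series rather than of partial sums. Your explicit identity $\sum_{n=1}^N s_n m^{-n}=(m-1)L_N+r_{N+1}m^{-N}$ together with the observation that $r_{N+1}m^{-N}=m(L_{N+1}-L_N)\to 0$ is just a slightly more careful finitary rendering of the same argument, not a different method.
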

\begin{proof}
    Note that the generating function of the sequence $\{r_n\}_{n\ge 1}$ converges at $x=1/m$ by \cref{lemma: monotonic sequence} as
    $$R_G(1/m)=\sum_{n=1}^\infty \frac{r_n}{m^n}=\lim_{n\to\infty}L_n=L.$$
    We claim that we have the equality $xS_G(x)=(1-x)R_G(x)$, and hence evaluating at $x=1/m$ yields the result. The claimed equality follows from 
    \begin{align*}
        xS_G(x)&=\sum_{n\ge 1}(r_{n+1}-r_n)x^{n+1}=R_G(x)-r_1x-xR_G(x)=(1-x)R_G(x),
    \end{align*}
    as $r_1=0$.
\end{proof}

\begin{proof}[Proof of \cref{theorem: Formula Hausdorff dimension}]
\cref{lemma: monotonic sequence} together with \cref{lemma: convergence of the Hausdorff series} yield both the strong Hausdorff dimension statement and the formula in \cref{theorem: Formula Hausdorff dimension} for $\mathfrak{A}$. The general formula for $W_H$ follows directly from the one of $\mathfrak{A}$ and \textcolor{teal}{(}\ref{align: hausdorff dimension in WH}\textcolor{teal}{)}.
\end{proof}

\subsection{A characterization of regular branchness}  A group $G$ is \textit{commensurable} to another group $H$ when there exist finite-index subgroups $K\le G$ and $L\le H$ and an isomorphism $f:K\to L$. The following result delivers some characterizations of closed regular branch groups, and thus answers a question of Siegenthaler \cite[Section 1.8]{SiegenthalerPhD} asking whether a profinite group $G$ is regular branch if and only if $ \psi(\mathrm{St}_G(1))$ has finite index in $G\times \overset{m}{\ldots}\times G$.

\begin{theorem}
\label{theorem: characterization of profinite regular branch groups}
Let $G\le\mathfrak{A}$ be a level-transitive self-similar closed group. Then the following are all equivalent:
\begin{enumerate}[\normalfont(i)]
    \item The group $G$ is regular branch.
    \item The group $G$ is regular branch over its $(M-1)$st level stabilizer for some $M\ge 1$.
    \item There exists $M\ge 1$ such that $s_n=0$ for all $n\ge M$.
    \item The group $G$ is commensurable with $G\times\overset{m}{\ldots}\times G$ via $\psi:\mathrm{St}_G(1)\to \psi(\mathrm{St}_G(1))$.
\end{enumerate}
\end{theorem}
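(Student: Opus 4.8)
The plan is to prove the cycle of implications $\text{(ii)}\Rightarrow\text{(iii)}\Rightarrow\text{(iv)}\Rightarrow\text{(i)}\Rightarrow\text{(ii)}$, so that everything becomes equivalent. The implication $\text{(i)}\Rightarrow\text{(ii)}$ should be the routine one: if $G$ is regular branch over a branching subgroup $K$ of finite index, then since $K$ is closed and of finite index it contains $\mathrm{St}_G(M-1)$ for some $M\ge 1$, and a standard argument shows $\mathrm{St}_G(M-1)$ is itself branching (the section map sends $\mathrm{St}_G$ of a higher level onto products of $\mathrm{St}_G(M-1)$ because $G$ is level-transitive and self-similar), giving (ii).

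For $\text{(ii)}\Rightarrow\text{(iii)}$ I would use \v{S}uni\'{c}'s formula together with the reformulation of $r_n$ and $s_n$ via the filtration $\{\mathfrak{S}_n\}$ from \cref{align: rn and sn self-similar}. If $G$ branches over $\mathrm{St}_G(M-1)$, then $\psi(\mathrm{St}_G(1))$ contains $\mathrm{St}_G(M-1)\times\overset{m}{\dotsb}\times\mathrm{St}_G(M-1)$, which is exactly $\mathfrak{S}_{M}$ meeting the base; more precisely $\mathfrak{S}_n = \psi(\mathrm{St}_G(1))\bigl(\mathrm{St}_G(n-1)\times\overset{m}{\dotsb}\times\mathrm{St}_G(n-1)\bigr)$ stabilizes for $n\ge M$ because the second factor is already inside the first. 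Hence $r_n$ is eventually constant, so $s_n=0$ for all $n\ge M$. The converse $\text{(iii)}\Rightarrow\text{(iv)}$ follows by running this backwards: $s_n=0$ for $n\ge M$ means $\mathfrak{S}_M=\mathfrak{S}_{M+1}=\cdots=\psi(\mathrm{St}_G(1))$, so $\psi(\mathrm{St}_G(1))\supseteq\mathrm{St}_G(M-1)\times\overset{m}{\dotsb}\times\mathrm{St}_G(M-1)$, which has finite index in $G\times\overset{m}{\dotsb}\times G$; thus $\psi(\mathrm{St}_G(1))$ has finite index in $G\times\overset{m}{\dotsb}\times G$, and since $\mathrm{St}_G(1)$ has finite index in $G$ and $\psi$ is injective, this says precisely that $\psi:\mathrm{St}_G(1)\to\psi(\mathrm{St}_G(1))$ is a commensuration of $G$ with $G\times\overset{m}{\dotsb}\times G$, giving (iv).

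The main obstacle is $\text{(iv)}\Rightarrow\text{(i)}$: from the mere commensurability $[G\times\overset{m}{\dotsb}\times G:\psi(\mathrm{St}_G(1))]<\infty$ I must manufacture a genuine finite-index \emph{branching} subgroup $K$, i.e. one with $\psi(\mathrm{St}_K(1))\supseteq K\times\overset{m}{\dotsb}\times K$. The natural candidate is $K:=\mathrm{St}_G(N)$ for $N$ large enough. Writing $d:=[G\times\overset{m}{\dotsb}\times G:\psi(\mathrm{St}_G(1))]$, one knows $\psi(\mathrm{St}_G(1))$ contains the kernel of the action of $G\times\overset{m}{\dotsb}\times G$ on the cosets, hence contains $N_1\times\overset{m}{\dotsb}\times N_1$ for some open normal $N_1\trianglelefteq G$; choosing $N_1\ge\mathrm{St}_G(L)$ and iterating one level at a time (using self-similarity to pass the containment down the tree and level-transitivity to symmetrize over the branches) yields $\psi(\mathrm{St}_{\mathrm{St}_G(N)}(1))\supseteq \mathrm{St}_G(N)\times\overset{m}{\dotsb}\times\mathrm{St}_G(N)$ for $N$ sufficiently large — this is where the bookkeeping is delicate, and where I would lean on \cref{align: rn and sn self-similar} again: the hypothesis forces $r_n$ bounded, hence (being non-decreasing by \cref{proposition: self-similar and branching formula}) eventually constant, hence $s_n=0$ eventually, closing the loop through (iii) and avoiding the ad hoc level-by-level argument entirely. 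Finally I would record that (ii), being regular branch over a specific level stabilizer, trivially implies (i).
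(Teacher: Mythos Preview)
Your cycle $\text{(ii)}\Rightarrow\text{(iii)}\Rightarrow\text{(iv)}\Rightarrow\text{(i)}\Rightarrow\text{(ii)}$ does not actually close. In the step you label $\text{(iv)}\Rightarrow\text{(i)}$ you abandon the direct construction of a branching subgroup and instead argue that (iv) forces $r_n$ bounded, hence the $\mathfrak{S}_n$ stabilise and $s_n=0$ eventually --- but that is $\text{(iv)}\Rightarrow\text{(iii)}$, not $\text{(iv)}\Rightarrow\text{(i)}$. At that point your proven implications are $\text{(i)}\Leftrightarrow\text{(ii)}\Rightarrow\text{(iii)}\Leftrightarrow\text{(iv)}$, with nothing returning from $\{\text{(iii)},\text{(iv)}\}$ to $\{\text{(i)},\text{(ii)}\}$; ``closing the loop through (iii)'' is circular since the only outgoing arrow from (iii) you have established points to (iv).

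The missing link is already buried in your own $\text{(iii)}\Rightarrow\text{(iv)}$ paragraph. There you show $\psi(\mathrm{St}_G(1))=\mathfrak{S}_M\supseteq\mathrm{St}_G(M-1)\times\overset{m}{\dotsb}\times\mathrm{St}_G(M-1)$ (using that $G$ is closed so that $\bigcap_n\mathfrak{S}_n=\psi(\mathrm{St}_G(1))$). Intersecting both sides with $\mathrm{St}_G(M-1)\times\overset{m}{\dotsb}\times\mathrm{St}_G(M-1)$ and invoking the identity
\[
\psi(\mathrm{St}_G(M))=\psi(\mathrm{St}_G(1))\cap\bigl(\mathrm{St}_G(M-1)\times\overset{m}{\dotsb}\times\mathrm{St}_G(M-1)\bigr)
\]
gives $\psi(\mathrm{St}_G(M))\supseteq\mathrm{St}_G(M-1)\times\overset{m}{\dotsb}\times\mathrm{St}_G(M-1)$, i.e.\ $\mathrm{St}_G(M-1)$ is branching --- which is exactly (ii). So your argument really proves $\text{(iii)}\Rightarrow\text{(ii)}$; once you relabel it, all four equivalences follow.

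This is precisely the paper's route: it runs $\text{(ii)}\Rightarrow\text{(i)}\Rightarrow\text{(iv)}\Rightarrow\text{(iii)}\Rightarrow\text{(ii)}$, taking $\text{(i)}\Rightarrow\text{(iv)}$ as the trivial step (a finite-index branching $K$ gives $\psi(\mathrm{St}_G(1))\ge K^m$ of finite index in $G^m$) rather than your more laborious $\text{(i)}\Rightarrow\text{(ii)}$, and deducing $\text{(iii)}\Rightarrow\text{(ii)}$ via the intersection identity above. Your ingredients are the same as the paper's; only the bookkeeping of which arrow is which needs repair.
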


\begin{proof}
Both (ii)$\implies$(i) and (i)$\implies$(iv) are  immediate. Now (iv)$\implies$(iii) follows from $r_n=\log|G\times\overset{m}{\ldots}\times G:\mathfrak{S}_n|$ and $\psi(\mathrm{St}_G(1))$ being of finite index in $G\times\overset{m}{\ldots}\times G$. For (iii)$\implies$(ii) we use that the closure of $\psi(\mathrm{St}_G(1))$ in the direct product $G\times\overset{m}{\ldots}\times G$ is $\bigcap_{n\ge 1} \mathfrak{S}_n=\mathfrak{S}_M$. As $\psi(\mathrm{St}_G(1))$ is closed we get $\psi(\mathrm{St}_G(1))=\mathfrak{S}_M$. Therefore the result follows from $\psi(\mathrm{St}_G(M))=\psi(\mathrm{St}_G(1))\cap\big( \mathrm{St}_G(M-1)\times\overset{m}{\ldots}\times \mathrm{St}_G(M-1)\big)$.
\end{proof}

We can use \cref{theorem: characterization of profinite regular branch groups} to obtain another characterization of regular branchness in terms of virtual fractality and  branchness. We need the following straightforward lemma.

\begin{lemma}
\label{lemma: projections of fractal rist are of finite index}
Let $G$ be a weakly self-similar branch group. Then $\psi_v(\mathrm{rist}_G(v))$
has finite index in $G$ for every vertex $v$ at the first level of the tree if and only if  $G$ is `virtually' fractal. 
\end{lemma}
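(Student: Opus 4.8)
The plan is to prove both implications directly from the definitions, using only that $G$ is weakly self-similar and branch. Throughout fix a vertex $v$ at the first level; by level-transitivity the choice of $v$ is immaterial up to conjugacy, so it suffices to treat one such $v$. The key general fact I will use repeatedly is the sandwich
$$
\psi_v(\mathrm{rist}_G(v))\le \psi_v(\mathrm{St}_G(1))\le G,
$$
which holds because $\mathrm{rist}_G(v)\le \mathrm{st}_G(v)$, because an element of $\mathrm{rist}_G(v)$ fixes the first level (its labels off $v$ are trivial) hence lies in $\mathrm{St}_G(1)$, and because weak self-similarity guarantees $\psi_v(\mathrm{St}_G(1))\le G$. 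Hence $[G:\psi_v(\mathrm{rist}_G(v))]$ is finite if and only if \emph{both} $[G:\psi_v(\mathrm{St}_G(1))]$ and $[\psi_v(\mathrm{St}_G(1)):\psi_v(\mathrm{rist}_G(v))]$ are finite.

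For the ``if'' direction, assume $G$ is `virtually' fractal, so $\psi_v(\mathrm{St}_G(1))$ has finite index in $G$; by the sandwich it remains to bound $[\psi_v(\mathrm{St}_G(1)):\psi_v(\mathrm{rist}_G(v))]$. Here I would use branchness: $\mathrm{Rist}_G(1)$ has finite index in $G$, hence $\mathrm{rist}_G(v)=\mathrm{Rist}_G(1)\cap \mathrm{st}_G(v)$ has finite index in $\mathrm{St}_G(1)$ (since $\mathrm{Rist}_G(1)\le \mathrm{St}_G(1)$, actually $\mathrm{rist}_G(v)$ already contains the full rigid level stabilizer restricted appropriately — more precisely $\mathrm{Rist}_G(1)=\prod_w \mathrm{rist}_G(w)$ and each factor has finite index in the corresponding vertex stabilizer). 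Then applying the continuous homomorphism $\psi_v$ preserves finiteness of index, giving $[\psi_v(\mathrm{St}_G(1)):\psi_v(\mathrm{rist}_G(v))]<\infty$, and composing the two finite indices finishes this direction.

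For the ``only if'' direction, assume $\psi_v(\mathrm{rist}_G(v))$ has finite index in $G$; since $\psi_v(\mathrm{rist}_G(v))\le \psi_v(\mathrm{St}_G(1))\le G$ by the sandwich, $\psi_v(\mathrm{St}_G(1))$ is squeezed between a finite-index subgroup and $G$, hence also has finite index in $G$. Combined with the standing hypothesis that $G$ is weakly self-similar, this is exactly the definition of `virtually' fractal, so we are done.

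The only genuine subtlety — hence the step I would write out most carefully — is the finiteness of $[\mathrm{st}_G(v):\mathrm{rist}_G(v)]$ (equivalently $[\mathrm{St}_G(1):\mathrm{Rist}_G(1)]$ being finite forces each $[\mathrm{st}_G(v):\mathrm{rist}_G(v)]$ finite) used in the ``if'' direction: one must check that passing from the rigid \emph{level} stabilizer to a single rigid \emph{vertex} stabilizer, and then pushing forward under $\psi_v$, does not destroy finiteness of index. This is where I would invoke that $\mathrm{Rist}_G(1)=\prod_{w}\mathrm{rist}_G(w)$ is a direct product over the $m$ first-level vertices and that each projection $\mathrm{Rist}_G(1)\to \mathrm{rist}_G(v)$ is surjective, so finite index of $\mathrm{Rist}_G(1)$ in $G$ (branchness) yields finite index of $\mathrm{rist}_G(v)$ in $\mathrm{st}_G(v)$, and a fortiori of $\psi_v(\mathrm{rist}_G(v))$ in $\psi_v(\mathrm{st}_G(v)) \supseteq \psi_v(\mathrm{St}_G(1))$. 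Everything else is formal manipulation of the tower of subgroups above.
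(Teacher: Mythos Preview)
Your overall architecture (the sandwich $\psi_v(\mathrm{rist}_G(v))\le \psi_v(\mathrm{St}_G(1))\le G$ and composing indices) is exactly right, and your ``only if'' direction is correct and matches the paper's one-line converse. The gap is in the ``if'' direction: you repeatedly assert that $\mathrm{rist}_G(v)$ has finite index in $\mathrm{St}_G(1)$ (or in $\mathrm{st}_G(v)$), and this is false. Already in $G=\mathfrak{A}$, which is branch, one has $\psi(\mathrm{St}(1))\cong \mathfrak{A}^m$ while $\psi(\mathrm{rist}(v))$ is a single copy of $\mathfrak{A}$ sitting in one coordinate, so $\mathrm{rist}(v)$ has infinite index in $\mathrm{St}(1)$. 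Your identity $\mathrm{rist}_G(v)=\mathrm{Rist}_G(1)\cap \mathrm{st}_G(v)$ is also wrong: since $\mathrm{Rist}_G(1)\le \mathrm{St}_G(1)\le \mathrm{st}_G(v)$, that intersection is just $\mathrm{Rist}_G(1)$.

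The fix is a one-line rearrangement, and it is precisely what the paper does: do not try to control the index of $\mathrm{rist}_G(v)$ before applying $\psi_v$. Instead observe that $\psi_v(\mathrm{rist}_G(w))=1$ for every first-level vertex $w\ne v$ (elements of $\mathrm{rist}_G(w)$ act trivially off the subtree at $w$), so from $\mathrm{Rist}_G(1)=\prod_w \mathrm{rist}_G(w)$ one gets $\psi_v(\mathrm{Rist}_G(1))=\psi_v(\mathrm{rist}_G(v))$. Now apply $\psi_v$ to the finite-index inclusion $\mathrm{Rist}_G(1)\le \mathrm{St}_G(1)$ (branchness) to obtain $[\psi_v(\mathrm{St}_G(1)):\psi_v(\mathrm{rist}_G(v))]<\infty$, and then compose with $[G:\psi_v(\mathrm{St}_G(1))]<\infty$ from `virtual' fractality. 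In short: the crucial equality $\psi_v(\mathrm{rist}_G(v))=\psi_v(\mathrm{Rist}_G(1))$ is the point you are missing; once you have it, the index bound is immediate and no claim about $[\mathrm{st}_G(v):\mathrm{rist}_G(v)]$ is needed.
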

\begin{proof}
Let us assume $G$ is `virtually' fractal and let us fix a vertex $v$ at the first level of the tree. By assumption, the map $\psi_v:\mathrm{st}_G(v)\to G$ has finite-index image in $G$. Since $G$ is branch we get that $\psi_v(\mathrm{rist}_G(v))=\psi_v(\mathrm{Rist}_G(1))$ is of finite index in $\psi_v(\mathrm{st}_G(v))$ and therefore $\psi_v(\mathrm{rist}_G(v))$ has finite index in $G$. The converse is immediate.
\end{proof}

\begin{theorem}
\label{theorem: for fractal profinite branch is equivalent to regular branch}
Let $G\le\mathfrak{A}$ be a closed self-similar group. Then the group $G$ is regular branch if and only if $G$ is `virtually' fractal and branch.
\end{theorem}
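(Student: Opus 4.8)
The plan is to prove Theorem~\ref{theorem: for fractal profinite branch is equivalent to regular branch} by playing off the characterization in \cref{theorem: characterization of profinite regular branch groups}(iii)---that $G$ is regular branch if and only if $s_n=0$ for all large $n$---against the structural meaning of `virtual' fractality. The forward direction is essentially free: if $G$ is regular branch then it is branch, and since a regular branch group contains a finite-index branching subgroup $K$ with $\psi(\mathrm{St}_K(1))\ge K\times\cdots\times K$, the projections $\psi_v(\mathrm{St}_G(1))$ have finite index in $G$ for every first-level vertex $v$ (they contain a finite-index copy of $K$), so $G$ is `virtually' fractal. The real content is the converse.

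For the converse, assume $G$ is `virtually' fractal and branch. Since $G$ is branch it is in particular weakly branch, and by \cref{lemma: projections of fractal rist are of finite index} the `virtually' fractal hypothesis gives that $\psi_v(\mathrm{rist}_G(v))$ has finite index in $G$ for every first-level vertex $v$. Now $\mathrm{Rist}_G(1)=\prod_{v}\mathrm{rist}_G(v)\le \mathrm{St}_G(1)$, and applying $\psi$ we get that $\psi(\mathrm{Rist}_G(1))$ contains $\prod_v\psi_v(\mathrm{rist}_G(v))$, which has finite index in $G\times\overset{m}{\cdots}\times G$ because each factor does. Since $\psi(\mathrm{Rist}_G(1))\le \psi(\mathrm{St}_G(1))\le G\times\overset{m}{\cdots}\times G$, this forces $\psi(\mathrm{St}_G(1))$ to have finite index in $G\times\overset{m}{\cdots}\times G$. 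By \cref{align: rn and sn self-similar} (which applies since $G$ is self-similar, hence weakly self-similar) this means the sequence $\{r_n\}_{n\ge 1}$ is bounded; being non-decreasing by \cref{proposition: self-similar and branching formula}, it is eventually constant, so $s_n=r_{n+1}-r_n=0$ for all $n\ge M$ for some $M$. Then \cref{theorem: characterization of profinite regular branch groups}, specifically (iii)$\implies$(i), yields that $G$ is regular branch.

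The only subtlety---and the step I would be most careful about---is the passage from ``each $\psi_v(\mathrm{rist}_G(v))$ has finite index in $G$'' to ``$\prod_v \psi_v(\mathrm{rist}_G(v))$ has finite index in $G\times\cdots\times G$'': this is immediate because a product of finitely many finite-index subgroups has index equal to the product of the indices, and there are exactly $m$ first-level vertices. One should also double-check that $\psi(\mathrm{Rist}_G(1))$ genuinely contains the full product $\prod_v\psi_v(\mathrm{rist}_G(v))$: an element $(h_1,\dots,h_m)$ with $h_v\in\psi_v(\mathrm{rist}_G(v))$ is the $\psi$-image of the product of lifts $g_v\in\mathrm{rist}_G(v)$, and these lifts commute and lie in $\mathrm{St}_G(1)$, so their product lies in $\mathrm{Rist}_G(1)\le\mathrm{St}_G(1)$ and maps to $(h_1,\dots,h_m)$. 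With these two elementary observations in place the argument closes, and no further computation is needed since \cref{theorem: characterization of profinite regular branch groups} does the remaining work.
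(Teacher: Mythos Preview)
Your proof is correct and follows essentially the same approach as the paper: both use \cref{lemma: projections of fractal rist are of finite index} to deduce that $\psi(\mathrm{Rist}_G(1))$, and hence $\psi(\mathrm{St}_G(1))$, has finite index in $G\times\overset{m}{\cdots}\times G$, and then appeal to \cref{theorem: characterization of profinite regular branch groups}. The only difference is that the paper invokes (iv)$\implies$(i) directly at that point, whereas you take a short detour through the boundedness of $\{r_n\}$ to reach (iii)$\implies$(i); your route is valid but the paper's is one step shorter, since the finite-index statement you establish is precisely condition~(iv).
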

\begin{proof}
Regular branch groups are `virtually' fractal and branch immediately from the definition so we only need to prove the converse. Therefore let us assume that the group $G$ is `virtually' fractal and branch. Then from \cref{lemma: projections of fractal rist are of finite index} we get that
$$\psi(\mathrm{Rist}_G(1))=\psi_{v_1}(\mathrm{rist}_G(v_1))\times\dotsb\times \psi_{v_m}(\mathrm{rist}_G(v_m))$$
has finite index in $G\times\overset{m}{\dotsb}\times G$ where $v_1,\dotsc,v_m$ denote the vertices at the first level of the tree from left to right. The result follows from $(\mathrm{iv})\implies(\mathrm{i})$ in \cref{theorem: characterization of profinite regular branch groups}.
\end{proof}

Virtual fractality cannot be dropped from \cref{theorem: for fractal profinite branch is equivalent to regular branch}; see \cref{Theorem: non-regular self-similar branch groups} for a source of counterexamples.

Lastly, since \cref{Conjecture: New horizons} reduces to just infinite branch profinite groups having positive Hausdorff dimension, and regular branch profinite groups have positive Hausdorff dimension by Bartholdi's result \cite[Proposition 2.7]{BartholdiHausdorff}, the equivalence in \cref{theorem: for fractal profinite branch is equivalent to regular branch} yields a positive result on \cref{Conjecture: New horizons}.

\begin{corollary}
\label{Corollary: Conjecture 1 holds for weakly fractal}
\cref{Conjecture: New horizons} holds for self-similar `virtually' fractal just infinite pro-$p$ groups.
\end{corollary}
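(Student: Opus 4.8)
The plan is to obtain \cref{Corollary: Conjecture 1 holds for weakly fractal} as a short deduction from \cref{theorem: for fractal profinite branch is equivalent to regular branch}, from Bartholdi's positivity theorem for regular branch profinite groups \cite[Proposition 2.7]{BartholdiHausdorff}, and from the reduction of \cref{Conjecture: New horizons} to branch groups recalled in the introduction. First I would handle the implication that holds in full generality: since every just infinite profinite group is either branch or hereditarily just infinite \cite{NewHorizonsGrigorchuk, Wilson}, and a hereditarily just infinite pro-$p$ group has trivial Hausdorff dimension in $\Gamma_p$ for every $p$-adic representation \cite[Section 15]{NewHorizonsGrigorchuk}, any just infinite pro-$p$ group admitting a $p$-adic representation of positive Hausdorff dimension in $\Gamma_p$ must be branch. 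This argument uses neither self-similarity nor fractality, so the only thing left to prove is that a self-similar `virtually' fractal just infinite \emph{branch} pro-$p$ group admits a $p$-adic representation of positive Hausdorff dimension in $\Gamma_p$.

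For this, let $G$ be such a group, regarded as a closed self-similar subgroup of $\mathfrak{A}$. Being also `virtually' fractal and branch, $G$ is regular branch by \cref{theorem: for fractal profinite branch is equivalent to regular branch}, and hence $\mathrm{hdim}_{\mathfrak{A}}(G)>0$ by \cite[Proposition 2.7]{BartholdiHausdorff}. Since $G$ is pro-$p$, it lies in a Sylow pro-$p$ subgroup of $\mathfrak{A}$, and all such subgroups are conjugate to $\Gamma_p$. Conjugation preserves the indices $|G:\mathrm{St}_G(n)|$ and hence the Hausdorff dimension, so we obtain an embedding of $G$ into $\Gamma_p$ which, by the scaling relation (\ref{align: hausdorff dimension in WH}), has positive Hausdorff dimension there. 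This is the required $p$-adic representation of $G$, and combined with the first paragraph it establishes the equivalence in \cref{Conjecture: New horizons} for $G$.

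I do not expect a genuine obstacle in this final step. The substantial work is already contained in \cref{theorem: characterization of profinite regular branch groups} and \cref{theorem: for fractal profinite branch is equivalent to regular branch}, which rest in turn on the behaviour of the series of obstructions $\{s_n\}_{n\ge 1}$, while the input from the theory of just infinite profinite groups and from Bartholdi's positivity theorem is standard. The only two small points I would be careful to spell out are: (a) for the $p$-adic representation only positivity of the Hausdorff dimension is needed, not self-similarity of the image, so passing to an $\mathfrak{A}$-conjugate of $G$ inside $\Gamma_p$ is harmless even if it destroys self-similarity; and (b) the elementary passage between Hausdorff dimension in $\mathfrak{A}$ and in $\Gamma_p$ furnished by (\ref{align: hausdorff dimension in WH}).
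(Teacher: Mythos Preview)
Your proposal is correct and follows essentially the same route as the paper: reduce \cref{Conjecture: New horizons} to the branch case via the just-infinite dichotomy, then apply \cref{theorem: for fractal profinite branch is equivalent to regular branch} to conclude that $G$ is regular branch, and invoke Bartholdi's positivity \cite[Proposition 2.7]{BartholdiHausdorff}. The paper states this in one sentence immediately before the corollary, while you spell out the backward implication and the passage from $\mathfrak{A}$ to $\Gamma_p$ more explicitly; one minor simplification is that your Sylow-conjugation step is in fact unnecessary, since a closed self-similar pro-$p$ subgroup of the automorphism group of the $p$-adic tree already has all its labels in the $p$-Sylow of $\mathrm{Sym}(p)$ and hence lies in $\Gamma_p$ from the outset.
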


Self-similarity may be dropped from \cref{Corollary: Conjecture 1 holds for weakly fractal} as virtual fractality and branchness are sufficient to produce a branching subgroup as seen in the proof of \cref{theorem: characterization of profinite regular branch groups}, and this already yields positivity of the Hausdorff dimension; see \cref{proposition: weakly branch Hausdorff}.

\subsection{Restrictions on the spectrum}

For $d\ge 1$ we say $P\le \mathrm{Sym}(m)\wr\overset{d}{\dotsb}\wr \mathrm{Sym}(m)$ is a \textit{pattern of depth $d$}. Then for such a pattern $P$ the corresponding group of \textit{finite type} $F_{P}\le\mathfrak{A}$ is the group such that an automorphism $g\in \mathfrak{A}$ belongs to $F_{P}$ if and only if $g|_v^d$ coincides with an element in $P$ for every vertex $v$ of the $m$-adic tree. A well-known result of Grigorchuk \cite[Proposition 7.5]{GrigorchukFinite} and \v{S}uni\'{c} \cite[Theorem 3]{SunicHausdorff} says that closed regular branch groups branching over their $(d-1)$st level stabilizer are precisely the level-transitive groups of finite type given by a pattern of size $d$. It follows from \cite[Lemma 1.2.1]{SiegenthalerPhD} that for a closed self-similar group $G$ we obtain a filtration 
\begin{align*}
    W_{G_1}\ge F_{G_2}\ge \dotsb \ge \bigcap_{n\ge 1} F_{G_n}=G
\end{align*}
where each $F_{G_n}$ is the group of finite type given by the pattern of depth $n$ corresponding to the $n$th congruence quotient $G_n$. Then 
\cref{theorem: Formula Hausdorff dimension} together with \cref{proposition: self-similar and branching formula} and \cref{theorem: characterization of profinite regular branch groups} yield that 
$$\lim_{n\to\infty}\mathrm{hdim}_{W_{G_1}}(F_{G_n})=1-\frac{1}{\log|G_1|}\lim_{n\to\infty}\sum_{i=1}^n\frac{s_n}{m^n}=\mathrm{hdim}_{W_{G_1}}(G).$$
This formula can be used to give an approximation of the Hausdorff dimension of a closed self-similar group from above. It also restricts the values the Hausdorff dimension of a closed self-similar group can attain.

\begin{corollary}
\label{Corollary: denseness of spectra}
    For any $H\le\mathrm{Sym}(m)$, the finite type Hausdorff spectrum of $W_H$ is dense in the self-similar Hausdorff spectrum of $W_H$ with respect to the standard topology in $[0,1]$. In particular, the self-similar Hausdorff spectrum of $W_H$ has a gap if and only if its finite type Hausdorff spectrum has a gap.
\end{corollary}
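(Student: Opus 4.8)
The plan is to exploit the filtration $W_{G_1}\ge F_{G_2}\ge\dotsb\ge\bigcap_n F_{G_n}=G$ displayed just above the statement, together with the limit formula
$$\lim_{n\to\infty}\mathrm{hdim}_{W_{G_1}}(F_{G_n})=\mathrm{hdim}_{W_{G_1}}(G)$$
for every closed self-similar group $G\le W_{G_1}$. First I would fix $H\le\mathrm{Sym}(m)$ and let $\Sigma$ denote the self-similar Hausdorff spectrum of $W_H$ and $\Sigma_{\mathrm{ft}}$ its finite type Hausdorff spectrum. The inclusion $\Sigma_{\mathrm{ft}}\subseteq\Sigma$ is immediate since, by the result of Grigorchuk and \v{S}uni\'{c} cited in the excerpt, level-transitive finite type groups are exactly the closed regular branch groups branching over some level stabilizer, and in particular they are self-similar; one must only check they sit inside $W_H$, which holds as soon as the defining pattern $P$ lies in $H\wr\overset{d}{\dotsb}\wr H$, and restricting to such patterns costs nothing since every $F_{G_n}$ arising from a closed self-similar $G\le W_H$ automatically has $G_1\le H$, hence $F_{G_n}\le W_{G_1}\le W_H$.

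For the density claim, I would take an arbitrary point $\alpha\in\Sigma$, realised as $\alpha=\mathrm{hdim}_{W_H}(G)$ for some closed self-similar $G\le W_H$. Writing $H_1:=G_1\le H$, the change-of-base relation \eqref{align: hausdorff dimension in WH} gives $\mathrm{hdim}_{W_H}(G)=\tfrac{\log|H_1|}{\log|H|}\,\mathrm{hdim}_{W_{H_1}}(G)$, and likewise for each $F_{G_n}$, which lies in $W_{H_1}$ and is level-transitive (level-transitivity of $G$ forces level-transitivity of each $G_n$, hence of $F_{G_n}$) and of finite type, so it is a finite type group contributing to $\Sigma_{\mathrm{ft}}$. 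Applying the limit formula in $W_{H_1}$ and then multiplying through by the fixed constant $\log|H_1|/\log|H|$, I obtain $\mathrm{hdim}_{W_H}(F_{G_n})\to\mathrm{hdim}_{W_H}(G)=\alpha$. Thus $\alpha$ is a limit of points of $\Sigma_{\mathrm{ft}}$, proving $\Sigma\subseteq\overline{\Sigma_{\mathrm{ft}}}$; combined with $\Sigma_{\mathrm{ft}}\subseteq\Sigma$ this gives $\overline{\Sigma_{\mathrm{ft}}}=\overline{\Sigma}$, i.e.\ $\Sigma_{\mathrm{ft}}$ is dense in $\Sigma$. The statement about gaps is then a formal consequence: a dense subset of $\Sigma$ and $\Sigma$ itself have exactly the same closure, so $\Sigma$ has a gap (a nonempty open interval in its convex hull meeting $\Sigma$ in the empty set) precisely when $\Sigma_{\mathrm{ft}}$ does.

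The only genuinely delicate point is making sure the hypotheses of the limit formula are met at each stage, namely that $F_{G_n}$ is a \emph{closed} self-similar subgroup of $W_{G_1}$ so that \cref{theorem: Formula Hausdorff dimension} and \cref{proposition: self-similar and branching formula} apply to it, and that its associated sequence $\{s_k(F_{G_n})\}$ is eventually zero so the displayed limit identity is the one already derived in the excerpt; this is exactly the content of \cref{theorem: characterization of profinite regular branch groups}, since $F_{G_n}$ is regular branch over its $(n-1)$st level stabilizer. Closedness of $F_{G_n}$ is automatic as it is an intersection of level-stabilizer-saturated conditions, hence closed in the congruence topology. Once these bookkeeping facts are in place the argument is purely formal, so I expect the main (modest) obstacle to be verifying the level-transitivity and containment conditions cleanly rather than any substantive analytic difficulty.
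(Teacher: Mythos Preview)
Your proposal is correct and follows precisely the approach the paper intends: the corollary is stated immediately after the displayed limit $\lim_{n\to\infty}\mathrm{hdim}_{W_{G_1}}(F_{G_n})=\mathrm{hdim}_{W_{G_1}}(G)$ and is meant to be read off from it, exactly as you do via the change-of-base constant $\log|G_1|/\log|H|$. One small remark: your parenthetical about level-transitivity is unnecessary and slightly off, since an arbitrary closed self-similar $G\le W_H$ need not be level-transitive, but membership of $F_{G_n}$ in $\Sigma_{\mathrm{ft}}$ requires only that it be of finite type inside $W_H$, which you already establish; simply delete that clause and the argument is clean.
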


\subsection{An application to a question of Bartholdi}

We prove that if a closed self-similar subgroup $G\le W_H$ has Hausdorff dimension 1 in $W_H$ then it must be equal to $W_H$. Then since Bondarenko proved that $W_H$ is topologically finitely generated if and only if $H$ is perfect \cite[Corollary~3]{Bondarenko1}, \cref{theorem: fg spectrum} follows.

\begin{proof}[Proof of \cref{theorem: fg spectrum}]
    Since $G$ is self-similar all the terms of the sequence $\{s_n\}_{n\ge 1}$ are non-negative, so by the formula in \cref{theorem: Formula Hausdorff dimension} the group $G$ has Hausdorff dimension~1 in $W_H$ if and only if $s_n=0$ for all $n\ge 1$ and $\log|G_1|=\log|H|$. Since $r_1=0$ this implies $r_n=0$ for all $n\ge 1$ and we get
    \begin{align}
    \label{align: proof of fg spectrum}
        \psi(\mathrm{St}_{G}(1))=G\times\overset{m}{\dotsb}\times G
    \end{align}
    by \cref{align: rn and sn self-similar}.
   
    It is enough to prove that $G$ splits over its first level stabilizer. Indeed $G_1=H$ since $G_1\le H$ and they both have the same orders. Therefore, if $G$ splits over its first level stabilizer then $G=W_H$. Let $g\in G$ be any element in $G$ and let us write $\psi(g)=(g_1,\dotsc,g_m)g|_\emptyset^1$. From \textcolor{teal}{(}\ref{align: proof of fg spectrum}\textcolor{teal}{)} there exists $\widetilde{g}\in \mathrm{St}_G(1)$ such that $\psi(\widetilde{g})=(g_1,\dotsc,g_m)$. Therefore
    $$\psi(g\widetilde{g}^{-1})=(1,\dotsc,1)g|_\emptyset^1$$
    and $g\widetilde{g}^{-1}\in G$ proving $G$ splits over its first level stabilizer.
\end{proof}

The above proof shows that for a closed self-similar group $G\le W_H$ the following are all equivalent:
\begin{enumerate}[\normalfont(i)]
    \item the group $G$ has Hausdorff dimension 1 in $W_H$;
    \item the group $G$ has trivial sequences $r_n=s_n=0$ for all $n\ge 1$;
    \item we have the equality $G=W_H$;
    \item the group $G$ is non-trivial and branching.
\end{enumerate}
The equivalence (iii)$\iff$(iv) was proved by Siegenthaler in \cite[Proposition 1.9.1]{SiegenthalerPhD}. 

\subsection{Interpretation of \cref{theorem: Formula Hausdorff dimension}}
Before concluding the section, we motivate the choice of stating \cref{theorem: Formula Hausdorff dimension} in terms of the generating function of the sequence $\{s_n\}_{n\ge 1}$ instead of the one of $\{r_n\}_{n\ge 1}$. Our motivation is twofold. On the one hand, the sequence $s_n$ is in many cases bounded by $0\le s_n\le m-1$. Thus the formula in \cref{theorem: Formula Hausdorff dimension} yields an $m$-base expansion of the Hausdorff dimension. Indeed, let $G\le W_H$ be a closed self-similar group where $\log|G_1|=\log|H|=1$ then 
$$1-\mathrm{hdim}_{W_H}(G)=S_G(1/m)=\sum_{n=1}^\infty \frac{s_n}{m^n}.$$
Under the  assumption $0\le s_n\le m-1$, evaluating the generating function $S_G(x)$ at $x=1/m$ yields the $m$-base expansion of the number $1-\mathrm{hdim}_{W_H}(G)\in [0,1]$. These $m$-base expansions are very effective in inductively producing closed subgroups of prescribed Hausdorff dimension; see for instance \cite{KlopschPhD}. However the sequence $\{r_n\}_{n\ge 1}$ is not in general bounded and does not yield such an $m$-base expansion. On the other hand, the sequence $\{s_n\}_{n\ge 1}$ can only ``see$"$ the labels at the $n$th level. This means that if we can construct a group $G$ having consecutive quotients $\mathrm{St}_G(n)/\mathrm{St}_G(n+1)$ of prescribed order we obtain a prescribed sequence $\{s_n\}_{n\ge 1}$. Piecing everything together, we obtain a very powerful tool to study the Hausdorff dimension of self-similar groups. We shall exploit this tool in \cref{section: generalized iterated wreath products} by constructing a family of closed groups generalizing iterated permutational wreath products, tailor-made for this purpose.

\section{A Generalization of iterated permutational wreath products}
\label{section: generalized iterated wreath products}

Let us choose inductively a sequence of subgroups $\mathcal{S}:=\{S_n\}_{n\ge 0}$ such that each $S_n\le \mathrm{St}(n)$ consists of finitary automorphisms of depth $n+1$. In other words, for every $s\in S_n$ we have $s|_v^1=1$ for every vertex $v$ not at level $n$. Note that there is a group isomorphism between the subgroups of $\mathrm{St}(n)$ consisting of finitary automorphisms of depth $n+1$ and the subgroups of $\mathrm{Sym}(m)\times \overset{m^n}{\ldots}\times \mathrm{Sym}(m)$. We want to impose an additional compatibility condition on the sequence of subgroups~$\mathcal{S}$. In order to do so we define a second sequence of subgroups $\{A_n\}_{n\ge 0}$. 

First we choose the subgroup $S_0\le \mathfrak{A}$ without any further restrictions and we set $A_0:=S_0$. Now for every $n\ge 1$ we choose the subgroup $S_n$ such that $S_n$ is normalized by the subgroup $A_{n-1}$, i.e. $A_{n-1}\le N_{\mathfrak{A}}(S_{n})$. Then we define the subgroup $A_n:=A_{n-1}S_n$, which, by construction, is the internal semidirect product $A_n=S_n\rtimes A_{n-1}$.

Lastly, we construct a profinite group $G_\mathcal{S}\le\mathfrak{A}$ acting on the $m$-adic tree as the closure of $\langle A_n~|~n\ge 0\rangle$ in $\mathfrak{A}$, i.e. as the inverse limit
$$G_\mathcal{S}:=\varprojlim_{n\to\infty}A_n,$$
where the connection homomorphisms are the natural quotient maps $A_n\twoheadrightarrow A_{n-1}$. The definition of the group $G_\mathcal{S}$ depends solely on the defining sequence $\mathcal{S}$ as we have $A_n=S_n\rtimes\dotsb\rtimes S_0$, which justifies the notation. One can look at the groups~$G_\mathcal{S}$ as the \textit{iterated permutational semidirect products} $G_\mathcal{S}=\dotsb\rtimes S_1\rtimes S_0$, generalizing iterated permutational wreath products.

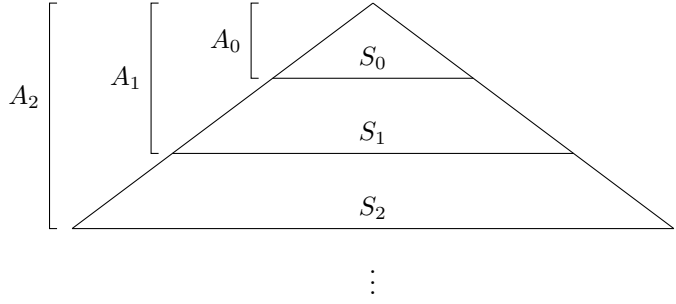
\begin{figure}[H]
    \centering
\begin{tikzpicture}
\coordinate (A) at (-4,0) {};
\coordinate (B) at ( 4,0) {};
\coordinate (C) at (0,3) {};
\draw[name path=AC] (A) -- (C);
\draw[name path=BC] (B) -- (C);
\foreach \y/\A in {0/$S_2$,1/$S_1$,2/$S_0$} {
    \path[name path=horiz] (A|-0,\y) -- (B|-0,\y);
    \draw[name intersections={of=AC and horiz,by=P},
          name intersections={of=BC and horiz,by=Q}] (P) -- (Q)
        node[midway,above] {\A};
}
\coordinate (1) at (-1.52, 3) {};
\coordinate (2) at (-1.52, 2) {};
\coordinate (11) at (-1.62, 3) {};
\coordinate (22) at (-1.62, 2) {};

\coordinate (a) at (-2.85, 3) {};
\coordinate (b) at (-2.85, 1) {};
\coordinate (aa) at (-2.95, 3) {};
\coordinate (bb) at (-2.95, 1) {};

\coordinate (i) at (-4.2, 3) {};
\coordinate (j) at (-4.2, 0) {};
\coordinate (ii) at (-4.3, 3) {};
\coordinate (jj) at (-4.3, 0) {};

\draw (1)--(11)--(22)--(2);
\draw (a)--(aa)--(bb)--(b);
\draw (i)--(ii)--(jj)--(j);
\node at (-1.62,2.5) [left] {$A_0$};
\node at (-2.95,2) [left] {$A_1$};
\node at (-4.3,1.75) [left] {$A_2$};

\node at (0,-1) [above] {$\vdots$};
\end{tikzpicture}
    \caption{The defining sequence $\mathcal{S}$ for the group $G_\mathcal{S}$.}
    \label{figure: Construction of G_L^S}
\end{figure}

Since $G_\mathcal{S}$ is the topological closure of the subgroup $\langle S_n\mid n\ge 0\rangle$ in the ultrametric space $\mathfrak{A}$, where the metric is induced by the level stabilizers, its elements are no more than infinite products
$$g=\prod_{n\ge 0}g_n,$$
where $g_n\in S_n$ and $g|_v^1=g_n|_v^1$ for every vertex $v$ at the $n$th level of the tree for each $n\ge 0$. In other words $G_\mathcal{S}=\prod_{n\ge 0}S_n$. Furthermore, it is also clear by construction that $\mathrm{St}_{G_\mathcal{S}}(n)=\prod_{k\ge n}S_k$ which hence yields $G_\mathcal{S}/\mathrm{St}_{G_\mathcal{S}}(n)\cong\prod_{0\le k\le n-1}S_k=A_{n-1}$. Indeed $G_\mathcal{S}=\mathrm{St}_{G_\mathcal{S}}(n)\rtimes A_{n-1}$ for every $n\ge 1$.

Thus we get a double viewpoint on $G_\mathcal{S}$: on the one hand, the closed subgroup splitting over its $n$th level stabilizer and such that $\mathrm{St}_{G_\mathcal{S}}(n)/\mathrm{St}_{G_\mathcal{S}}(n+1)\cong S_n$ for all $n\ge 1$; and, on the other hand, the closed subgroup whose elements are infinite products $g=\prod_{n\ge 0}g_n$ with $g_n\in S_n$ for all $n\ge 0$, so that $g|_v^1=g_n|_v^1$ for every vertex $v$ at the $n$th level for every $n\ge 0$.

Note that if for every $g_n\in S_n$ we have $g_n|_v^1\in H\le \mathrm{Sym}(m)$ for every vertex $v$, then $G_\mathcal{S}\le W_H$.

\subsection{First examples}
We set $\psi_0(H)=H$ for any $H\le \mathrm{Sym}(m)$. Then the full automorphism group of the $m$-adic tree $\mathfrak{A}$ corresponds to the sequence 
$$\mathcal{S}:=\big\{\psi_n^{-1}\big(\mathrm{Sym}(m)\times\overset{m^n}{\dotsb}\times \mathrm{Sym}(m)\big)\big\}_{n\ge 0}.$$
More generally, for $H\le\mathrm{Sym}(m)$, the iterated permutational wreath product $W_H$ corresponds to the sequence 
$$\mathcal{S}:=\big\{\psi_n^{-1}\big(H\times\overset{m^n}{\dotsb}\times H\big)\big\}_{n\ge 0}.$$

\subsection{Properties of $G_\mathcal{S}$}
Let us study some general properties of the groups $G_\mathcal{S}$.

\begin{proposition}
\label{proposition: self-similar is of the form G_L^S}
The group $G_\mathcal{S}$ is self-similar (or branching) if and only if we have $\psi(S_n)\le S_{n-1}\times\overset{m}{\ldots}\times S_{n-1}$ (respectively $\psi(S_n)\ge S_{n-1}\times\overset{m}{\ldots}\times S_{n-1}$) for all $n\ge 1$. Moreover $G_\mathcal{S}$ is super strongly fractal if and only if for every vertex $x$ at the first level of the tree and for every $n\ge 1$ we have $\psi_x(S_n)=S_{n-1}$.
\end{proposition}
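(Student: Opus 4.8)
The plan is to prove each of the three equivalences in \cref{proposition: self-similar is of the form G_L^S} by unwinding the two viewpoints on $G_\mathcal{S}$ established just before the statement: that $\mathrm{St}_{G_\mathcal{S}}(n) = \prod_{k\ge n} S_k$ and that an element $g\in G_\mathcal{S}$ has a unique expression $g=\prod_{n\ge 0} g_n$ with $g_n\in S_n$ and $g|_v^1 = g_n|_v^1$ for $v$ at level $n$. The key observation driving everything is that the section map at a first-level vertex $x$ interacts cleanly with the grading $\{S_n\}$: for $s\in S_n$ with $n\ge 1$ we have $s\in\mathrm{St}(1)$, and $\psi_x(s)$ is a finitary automorphism of depth $n$ supported at level $n-1$ of the subtree; identifying $\psi = \psi_1$ with the coordinatewise map, $\psi(s)=(s|_{x_1},\dots,s|_{x_m})$ where each $s|_{x_i}$ lies in (the ambient analogue of) the subgroups at level $n-1$.

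For the self-similar direction, I would argue as follows. If $\psi(S_n)\le S_{n-1}\times\overset{m}{\ldots}\times S_{n-1}$ for all $n\ge 1$, then given any $g=\prod_{n\ge 0} g_n\in G_\mathcal{S}$ and any first-level vertex $x$, the section $g|_x$ is (up to the rooted part, which disappears when $g_0$ does not move $x$, and which one handles by first composing with an element of $S_0$) the product $\prod_{n\ge 1}(g_n|_x)$, and each $g_n|_x\in S_{n-1}$ by hypothesis, so $g|_x\in\prod_{k\ge 0}S_k = G_\mathcal{S}$; iterating over deeper vertices and using self-similarity of $W_H$-type ambient groups gives sections at all vertices. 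The converse follows by applying self-similarity to a single element of $S_n$: since $S_n\subseteq G_\mathcal{S}$, self-similarity forces $s|_x\in G_\mathcal{S}\cap\mathfrak{A}_{[n]}$; but $s|_x$ is supported at level $n-1$, and the only elements of $G_\mathcal{S}$ with that support form exactly $S_{n-1}$ (this is where I use that $G_\mathcal{S}=\prod S_k$ is an \emph{internal} product with the $S_k$ living at distinct levels, so the level-$(n-1)$ labels of any element of $G_\mathcal{S}$ are precisely those realized by $S_{n-1}$). Hence $\psi(S_n)\le S_{n-1}^{\times m}$. The branching case is the mirror image: $\psi(\mathrm{St}_{G_\mathcal{S}}(1)) \ge G_\mathcal{S}\times\overset{m}{\ldots}\times G_\mathcal{S}$ unwinds level by level to $\psi(S_n)\ge S_{n-1}\times\overset{m}{\ldots}\times S_{n-1}$, using $\mathrm{St}_{G_\mathcal{S}}(1)=\prod_{k\ge 1}S_k$ and comparing the level-$(n-1)$ graded pieces on both sides.

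For super strong fractality, recall the definition requires $\psi_x(\mathrm{St}_{G_\mathcal{S}}(n)) = G_\mathcal{S}$ for every vertex $x$ at level $n$ and every $n\ge 1$. I would first reduce to first-level vertices and the statement $\psi_x(\mathrm{St}_{G_\mathcal{S}}(1)) = G_\mathcal{S}$: since $G_\mathcal{S}$ splits as $\mathrm{St}_{G_\mathcal{S}}(n)\rtimes A_{n-1}$ and $\mathrm{St}_{G_\mathcal{S}}(n) = \prod_{k\ge n} S_k$, the section at a level-$n$ vertex of $\mathrm{St}_{G_\mathcal{S}}(n)$ factors through successive first-level section maps; a clean induction shows the level-$n$ condition for all vertices is equivalent to the first-level condition $\psi_x(S_n) = S_{n-1}$ holding for all $n\ge 1$ (one also needs this for all $x$, which by conjugating with rooted automorphisms / using level-transitivity of the ambient $W_H$ reduces to a single $x$ when the $S_n$ are suitably symmetric — but since the proposition states it for every vertex $x$, I keep the quantifier). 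Concretely, $\psi_x(\mathrm{St}_{G_\mathcal{S}}(1)) = \psi_x\big(\prod_{k\ge 1}S_k\big) = \prod_{k\ge 1}\psi_x(S_k)$, and each $\psi_x(S_k)$ is supported at level $k-1$, so this product equals $G_\mathcal{S} = \prod_{j\ge 0}S_j$ if and only if $\psi_x(S_k) = S_{k-1}$ for every $k\ge 1$, matching level-$(k-1)$ graded pieces. That handles $n=1$; for general $n$ one composes $n$ such section maps and the condition telescopes to the same family of equalities.

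The main obstacle I anticipate is bookkeeping the rooted (depth-one) parts of elements cleanly: when I write $g|_x = \prod_{n\ge 1} g_n|_x$ I am tacitly using that $g_0\in S_0$ acts as a rooted automorphism, and if $g_0$ moves $x$ then $g|_x$ is a section at a \emph{different} vertex together with a permutation bookkeeping term. The honest way to dispatch this is to note $G_\mathcal{S}$ contains all of $S_0$, hence is closed under post-composition by rooted automorphisms and under the induced permutation of the coordinates of $\psi$, so self-similarity and fractality are insensitive to the rooted part and one may assume $g\in\mathrm{St}_{G_\mathcal{S}}(1)$ throughout; I would state this reduction once at the start. The other mild subtlety is justifying that infinite products $\prod_{k\ge n}\psi_x(S_k)$ behave like the finite ones — this is fine because the factor $\psi_x(S_k)$ affects only level $k-1$, so on any fixed truncated tree only finitely many factors are nontrivial, exactly as in the definition of $G_\mathcal{S}$ itself, and convergence is automatic in the congruence topology.
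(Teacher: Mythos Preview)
Your proposal is correct and follows essentially the same approach as the paper: decompose $g=\prod_{n\ge 0} g_n$ along the grading and track how the section maps $\psi_v$ act on each $S_n$. The paper goes directly to a vertex $v$ at an arbitrary level $k$ (iterating the hypothesis to $\psi_k(S_n)\le S_{n-k}\times\overset{m^k}{\dotsb}\times S_{n-k}$) rather than inducting from level~1, and for super strong fractality it constructs explicit preimages $\tilde{g}_n\in S_{n+k}$ of each $g_n$ under $\psi_v$ rather than phrasing it as matching graded pieces, but these are cosmetic differences; your explicit bookkeeping of the rooted part is in fact slightly more careful than the paper's.
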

\begin{proof}
First note that if $G_\mathcal{S}$ is self-similar then $\psi(S_n)\le S_{n-1}\times\overset{m}{\ldots}\times S_{n-1}$. Now for the converse let $g\in G_\mathcal{S}$ and let us consider $g=\prod_{n\ge 0}g_n$ where $g_n\in S_n$ for each $n\ge 0$. By assumption we get
\begin{align}
    \label{align: GS sections}
    \psi_k(S_n)\le \psi_{k-1}(S_{n-1})\times\overset{m}{\dotsb}\times \psi_{k-1}(S_{n-1})\le \dotsb\le S_{n-k}\times\overset{m^k}{\dotsb}\times S_{n-k}
\end{align}
for every $1\le k\le n$. Therefore, if for any $k\ge 1$ we consider the section of $g$ at any vertex $v$ at the $k$th level of the tree and of infinite depth, then 
$$g|_v=\big(\prod_{n\ge 0}g_n\big)|_v=\psi_v\big(\prod_{n\ge k}g_n\big)=\prod_{n\ge k}\psi_v(g_n)$$
where $\psi_v(g_n)\in S_{n-k}$ by \cref{align: GS sections}. Thus $g|_v\in \prod_{n\ge 0}S_n=G_\mathcal{S}$. The proof for the branching case is completely analogous, mutatis mutandis.

Lastly, if $G_\mathcal{S}$ is super strongly fractal then we have $\psi_{x}(S_n)=S_{n-1}$ for every $n\ge 1$ and any vertex $x$ at the first level of the tree. To prove the converse we fix a level $k\ge 1$ and a vertex $v$ at the $k$th level and we consider any $g\in G_\mathcal{S}$ where $g=\prod_{n\ge 0}g_n$ and $g_n\in S_n$ for all $n\ge 0$. Since for every vertex $x$ at the first level we have $\psi_x(S_n)=S_{n-1}$ for every $n\ge 1$, then for each $n\ge 0$ there exists an element $\widetilde{g}_n\in S_{n+k}$ such that $\psi_{v}(\widetilde{g}_n)=g_n$. Therefore the element $\tilde{g}:=\prod_{n\ge 0}\tilde{g}_n\in \prod_{n\ge k}S_n=\mathrm{St}_{G_\mathcal{S}}(k)$ satisfies
$$\widetilde{g}|_{v}=\big(\prod_{n\ge 0}\tilde{g}_n\big)|_{v}=\psi_{v}\big(\prod_{n\ge 0}\tilde{g}_n\big)=\prod_{n\ge 0}\psi_{v}(\tilde{g}_n)=\prod_{n\ge 0}g_n=g.\eqno\qedhere$$

\end{proof}

To study level-transitivity of the group $G_\mathcal{S}$ we give first a general result. We just give a short proof of (i)$\implies$(iii) for completeness as the statement seems to not be recorded anywhere despite its simplicity.

\begin{proposition}
        \label{proposition: level-transitivity in general groups}
        Let $G$ be a group acting on the $m$-adic tree. Then, the following are all equivalent:
        \begin{enumerate}[\normalfont(i)]
            \item the group $G$ acts transitively on the $(n+1)$st level of the tree;
            \item for each $0\le k\le n$, there exists a vertex $v$ at the $k$th level  such that $\psi_v(\mathrm{st}_G(v))$ acts transitively on the set $\{1,\dotsc,m\}$;
            \item for every vertex $v$ at the first $n$ levels of the tree we have that $\psi_v(\mathrm{st}_G(v))$ acts transitively on the set $\{1,\dotsc,m\}$.
        \end{enumerate}
        Therefore, the group $G$ being level-transitive is equivalent to both conditions $\mathrm{(ii)}$ and $\mathrm{(iii)}$ for every level of the tree.
\end{proposition}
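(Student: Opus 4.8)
The plan is to prove the three implications (i)$\implies$(iii)$\implies$(ii)$\implies$(i), since (iii)$\implies$(ii) is trivial (it is a universal statement implying an existential one once we know each level is non-empty) and (i)$\implies$(iii) is the only substantive direction, but the promised short proof in the excerpt is only for (i)$\implies$(iii), so I would fill in (ii)$\implies$(i) as well for a complete argument.

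\textbf{The direction (i)$\implies$(iii).} Suppose $G$ acts transitively on the $(n+1)$st level. Fix a vertex $v$ at level $k$ with $0\le k\le n$ and fix two descendants $vx$ and $vy$ of $v$ at level $k+1$, where $x,y\in\{1,\dots,m\}$. We must find $h\in\mathrm{st}_G(v)$ with $(x)\psi_v(h)=y$. Extend $vx$ to a vertex $w$ at level $n+1$ below $vx$ arbitrarily; transitivity on level $n+1$ gives $g\in G$ carrying $w$ to the vertex $w'$ obtained from $w$ by replacing the initial segment $vx$ with $vy$ — but a priori such $g$ need not fix $v$. To fix this, note that because $G$ acts on the tree, $g$ must send the length-$k$ prefix of $w$ (which is $v$) to the length-$k$ prefix of $w'$ (which is also $v$); hence $g\in\mathrm{st}_G(v)$ automatically. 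Then $\psi_v(g)$ sends the length-one word $x$ to $y$, giving transitivity of $\psi_v(\mathrm{st}_G(v))$ on $\{1,\dots,m\}$. This is exactly the short argument the authors announce.

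\textbf{The direction (ii)$\implies$(i).} This is the direction needing a genuine induction on $n$. The base case $n=0$: condition (ii) for $k=0$ says $\psi_\emptyset(\mathrm{st}_G(\emptyset))=\psi(G)$ acts transitively on $\{1,\dots,m\}$, i.e. $G$ acts transitively on level $1$. For the inductive step, assume $G$ acts transitively on level $n$; I want transitivity on level $n+1$. Given two vertices $u,u'$ at level $n+1$, write $u=v x$, $u'=v' x'$ with $v,v'$ at level $n$. By the inductive hypothesis there is $g_1\in G$ with $(v)g_1=v'$; replacing $u$ by $(u)g_1=v' x''$ we reduce to the case $v=v'$, i.e. two descendants of a common level-$n$ vertex $v$. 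Now apply condition (ii) at $k=n$: there is \emph{some} level-$n$ vertex $w$ with $\psi_w(\mathrm{st}_G(w))$ transitive on $\{1,\dots,m\}$; using transitivity on level $n$ again, conjugate so that $w=v$ — more precisely, pick $g_2\in G$ with $(w)g_2=v$, and observe $g_2^{-1}\,\mathrm{st}_G(w)\,g_2=\mathrm{st}_G(v)$ and $\psi_v(\mathrm{st}_G(v))$ is conjugate in $\mathrm{Sym}(m)$ to $\psi_w(\mathrm{st}_G(w))$, hence also transitive. Then an element of $\mathrm{st}_G(v)$ whose section at $v$ moves $x$ to $x'$ completes the job.

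\textbf{Main obstacle.} The only subtlety is the bookkeeping in (ii)$\implies$(i): condition (ii) only guarantees transitivity of the label action at \emph{one} vertex per level, so one must transport that along the level-$n$ orbit (which is all of level $n$ by the inductive hypothesis) and check that conjugation in $G$ correctly conjugates vertex stabilizers and intertwines the section maps $\psi_v$. This is routine once one writes $\psi_v(g^{-1}hg)=\psi_{(v)g^{-1}}(h)$-type identities carefully, but it is where a careless proof would slip. I would state this transport lemma explicitly (or inline it) and then the equivalence falls out; the final sentence of the proposition, that level-transitivity is (ii) (resp.\ (iii)) at every level, is then immediate by quantifying over $n$.
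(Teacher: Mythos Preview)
Your proof is correct and follows essentially the same approach as the paper. The paper disposes of (iii)$\implies$(ii) as trivial, cites a reference for (ii)$\implies$(i), and gives only the short (i)$\implies$(iii) argument: pick $v$, use transitivity one level below $v$ to find $g$ sending $vx$ to $vy$, and observe that such $g$ must fix $v$. Your (i)$\implies$(iii) is the same idea, just routed through level $n+1$ rather than level $k+1$ (which is harmless since transitivity at level $n+1$ implies transitivity at every lower level); and your (ii)$\implies$(i) spells out the standard induction the paper only cites, including the conjugation transport $\psi_v(g_2^{-1}hg_2)=(g_2|_w)^{-1}\,\psi_w(h)\,(g_2|_w)$ that makes it work.
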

\begin{proof}
     The implication (iii)$\implies$(ii) is trivial and the proof of (ii)$\implies$(i) follows from a standard argument, see \cite[Lemma 3]{ClassificationAutomata} for instance. Therefore, we just prove (i)$\implies$(iii). Let $v$ be any vertex at a given level of the tree. Let $G$ act transitively one level below $v$. Then, for any $x,y\in \{1,\dotsc,m\}$ there exists an element $g\in G$ sending $vx$ to $vy$. In particular $g$ fixes $v$ and therefore $g\in \mathrm{st}_G(v)$, proving $\psi_v(\mathrm{st}_G(v))$ acts transitively on the set $\{1,\dotsc, m\}$.
\end{proof}

\cref{proposition: level-transitivity in general groups} yields immediately  a characterization of the level-transitivity of the groups $G_\mathcal{S}$.

\begin{corollary}
\label{corollary: S transitive G transitive}
The group $G_\mathcal{S}$ is level-transitive if and only if for every $n\ge 0$ there exists a vertex $v$ at the $n$th level such that $\psi_v(S_n)$ acts transitively on the set $\{1,\dotsc, m\}$.
\end{corollary}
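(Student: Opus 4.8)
The plan is to invoke \cref{proposition: level-transitivity in general groups} applied to the group $G_\mathcal{S}$, combining it with the structural description of $G_\mathcal{S}$ obtained in the previous subsection, namely that $\mathrm{st}_{G_\mathcal{S}}(v)$ for a vertex $v$ at the $n$th level decomposes compatibly with the factors $S_k$. Concretely, recall $G_\mathcal{S}=\prod_{k\ge 0}S_k$ and $\mathrm{St}_{G_\mathcal{S}}(n)=\prod_{k\ge n}S_k$. So the plan proceeds as follows.

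First I would establish the forward direction. If $G_\mathcal{S}$ is level-transitive, then in particular it acts transitively on the $(n+1)$st level for every $n\ge 0$, so by (i)$\implies$(iii) in \cref{proposition: level-transitivity in general groups} we get that $\psi_v(\mathrm{st}_{G_\mathcal{S}}(v))$ acts transitively on $\{1,\dotsc,m\}$ for every vertex $v$ at the $n$th level. Now I claim that the contribution of the labels at level $n$ to $\psi_v(\mathrm{st}_{G_\mathcal{S}}(v))$ is exactly $\psi_v(S_n)$: indeed, writing any $g\in\mathrm{st}_{G_\mathcal{S}}(v)$ as $g=\prod_{k\ge 0}g_k$ with $g_k\in S_k$, the label $g|_v^1$ equals $g_n|_v^1$ since elements of $S_k$ for $k\ne n$ have trivial label at every vertex of level $n$. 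Hence the set of labels $\{g|_v^1 : g\in\mathrm{st}_{G_\mathcal{S}}(v)\}$ coincides with $\{g_n|_v^1: g_n\in S_n, g_n \in \mathrm{st}(v)\}$, and since every element of $S_n$ fixes $v$ (as $S_n\le\mathrm{St}(n)$), this is precisely $\psi_v(S_n)$ viewed as a subgroup of $\mathrm{Sym}(m)$. Thus $\psi_v(S_n)$ acts transitively for every $v$ at level $n$, which in particular gives the existence of such a vertex.

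For the converse, suppose that for every $n\ge 0$ there is a vertex $v$ at level $n$ with $\psi_v(S_n)$ transitive on $\{1,\dotsc,m\}$. By the same label computation as above, $\psi_v(S_n)\le\psi_v(\mathrm{st}_{G_\mathcal{S}}(v))$ (in fact they have the same image in $\mathrm{Sym}(m)$), so $\psi_v(\mathrm{st}_{G_\mathcal{S}}(v))$ acts transitively on $\{1,\dotsc,m\}$. This gives condition (ii) of \cref{proposition: level-transitivity in general groups} at every level, and by (ii)$\implies$(i) there applied at each level, $G_\mathcal{S}$ acts transitively on every level, i.e. is level-transitive. The main (and only mildly delicate) point is the label bookkeeping identifying $\psi_v(S_n)$ as the image of the labels at level $n$; this is immediate from the defining property that $s\in S_n$ satisfies $s|_w^1=1$ for every vertex $w$ not at level $n$, together with the infinite-product description $G_\mathcal{S}=\prod_{k\ge 0}S_k$. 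Everything else is a direct citation of \cref{proposition: level-transitivity in general groups}, so there is no real obstacle; the statement is essentially a translation of that proposition into the language of defining sequences.
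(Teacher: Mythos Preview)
Your argument is correct and follows the same approach as the paper, which simply records the corollary as an immediate consequence of \cref{proposition: level-transitivity in general groups} without further proof. You have correctly unpacked the one point that deserves comment, namely that for $v$ at level $n$ and $g=\prod_{k\ge 0}g_k\in\mathrm{st}_{G_\mathcal{S}}(v)$ one has $g|_v^1=g_n|_v^1$, so that the first-level action of $\psi_v(\mathrm{st}_{G_\mathcal{S}}(v))$ coincides with that of $\psi_v(S_n)$.
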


\begin{proposition}
\label{proposition: Sn direct products GLS branch}
Let $G_\mathcal{S}$ be level-transitive. Then $G_\mathcal{S}$ is branch if and only if for any $l\ge 1$ there exists $k\ge l$ such that $ \mathrm{Rist}_{S_n}(l)=S_n$ for all $n\ge k$.
\end{proposition}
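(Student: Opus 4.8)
The plan is to translate the abstract branch condition for $G_\mathcal{S}$ into the combinatorial condition on the defining sequence $\mathcal{S}$ by analyzing the rigid level stabilizers level by level, exploiting the product decomposition $G_\mathcal{S}=\prod_{n\ge 0}S_n$ and $\mathrm{St}_{G_\mathcal{S}}(k)=\prod_{n\ge k}S_n$. Since $G_\mathcal{S}$ is assumed level-transitive, by definition $G_\mathcal{S}$ is branch if and only if $\mathrm{Rist}_{G_\mathcal{S}}(l)$ has finite index in $G_\mathcal{S}$ for every $l\ge 1$. The key observation is that because $G_\mathcal{S}$ splits over each level stabilizer with $\mathrm{St}_{G_\mathcal{S}}(k)/\mathrm{St}_{G_\mathcal{S}}(k+1)\cong S_k$, one has $|G_\mathcal{S}:\mathrm{Rist}_{G_\mathcal{S}}(l)|<\infty$ precisely when $\mathrm{Rist}_{G_\mathcal{S}}(l)$ contains $\mathrm{St}_{G_\mathcal{S}}(k)=\prod_{n\ge k}S_n$ for some $k\ge l$ (finite index above level $k$ is automatic since $A_{k-1}$ is finite).

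First I would unwind what $\mathrm{rist}_{G_\mathcal{S}}(v)$ looks like for a vertex $v$ at level $l$. An element $g=\prod_{n\ge 0}g_n\in G_\mathcal{S}$ lies in $\mathrm{st}_{G_\mathcal{S}}(v)$ and has trivial labels off the subtree below $v$ exactly when each $g_n$ (for $n\ge l$, say) has its support, restricted to level $n$, concentrated below $v$; equivalently, writing $g_n$ via the identification of $S_n$ with a subgroup of $\mathrm{Sym}(m)^{m^n}$, the coordinates of $g_n$ at vertices not below $v$ are trivial. This is exactly the statement that the ``$v$-part'' of $g_n$ lies in $\mathrm{rist}_{S_n}(v)$. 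Hence $\mathrm{Rist}_{G_\mathcal{S}}(l)=\prod_{n\ge 0}\mathrm{Rist}_{S_n}(l)$ (with the convention that $\mathrm{Rist}_{S_n}(l)=S_n$ trivially when $n<l$, as those $S_n$ act above level $l$ and are subsumed into finite index). Then $\mathrm{Rist}_{G_\mathcal{S}}(l)$ contains $\mathrm{St}_{G_\mathcal{S}}(k)=\prod_{n\ge k}S_n$ if and only if $\mathrm{Rist}_{S_n}(l)=S_n$ for all $n\ge k$.

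Combining the two reductions: $G_\mathcal{S}$ is branch $\iff$ for every $l\ge 1$ there is $k\ge l$ with $\mathrm{St}_{G_\mathcal{S}}(k)\le \mathrm{Rist}_{G_\mathcal{S}}(l)$ $\iff$ for every $l\ge 1$ there is $k\ge l$ with $\mathrm{Rist}_{S_n}(l)=S_n$ for all $n\ge k$, which is the claim. The main obstacle — and the step requiring care — is the identity $\mathrm{Rist}_{G_\mathcal{S}}(l)=\prod_{n\ge 0}\mathrm{Rist}_{S_n}(l)$: one direction ($\supseteq$) is immediate from the definition of rigid stabilizers, but for $\subseteq$ one must check that an arbitrary rigid automorphism $g=\prod_n g_n$ really decomposes factor-by-factor, i.e. that each $g_n$ individually must be rigid at level $l$. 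This follows because $g$ and each partial product $\prod_{n\le N}g_n$ have the same labels at each level $\le N$, so rigidity of $g$ forces rigidity of every $g_n$ at its own level; I would spell this out using that $g|_w^1 = g_n|_w^1$ for $w$ at level $n$, so that $g|_w^1=1$ for $w$ at level $n$ not below $v$ is equivalent to $g_n|_w^1=1$ there. The remaining implications (finite index above level $k$; rewriting finite index as containment of a level stabilizer, using the split structure) are routine.
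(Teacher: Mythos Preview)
Your approach is correct and follows the same structural idea as the paper's proof: both reduce branchness to the containment $\mathrm{St}_{G_\mathcal{S}}(k)\le\mathrm{Rist}_{G_\mathcal{S}}(l)$ using the decomposition $\mathrm{St}_{G_\mathcal{S}}(k)=\prod_{n\ge k}S_n$, and then translate this into the condition $\mathrm{Rist}_{S_n}(l)=S_n$ for $n\ge k$. The difference is one of packaging: the paper dispatches the converse direction by invoking openness (finite index plus closed gives open, hence containment of some level stabilizer, hence $S_n\le\mathrm{Rist}_{G_\mathcal{S}}(l)$ for $n\ge k$, from which $\mathrm{Rist}_{S_n}(l)=S_n$ follows by projecting onto the level-$n$ labels), whereas you go further and explicitly identify $\mathrm{Rist}_{G_\mathcal{S}}(l)=\prod_{n}\mathrm{Rist}_{S_n}(l)$, which makes the index factor as $\prod_{n}|S_n:\mathrm{Rist}_{S_n}(l)|$ and yields the equivalence directly. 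Your route avoids the topological step but requires the (easy) swap of the two products over $v$ and over $n$, which you should mention follows from the fact that all the $\mathrm{rist}_{S_n}(v)$ commute.

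One small slip: your parenthetical ``$\mathrm{Rist}_{S_n}(l)=S_n$ trivially when $n<l$'' is false --- for $n<l$ every non-trivial element of $S_n$ has a non-trivial label at level $n$, which lies strictly above level $l$ and hence off every subtree rooted at level $l$, so in fact $\mathrm{Rist}_{S_n}(l)=1$ for $n<l$. This does not damage the argument (those finitely many factors are absorbed into the finite index of $\mathrm{St}_{G_\mathcal{S}}(l)$ in $G_\mathcal{S}$), but the convention as stated is incorrect.
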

\begin{proof}
    This follows directly from the equality $\mathrm{St}_{G_\mathcal{S}}(k)=\prod_{n\ge k}S_k$. Indeed, if for every $l\ge 1$ there exists such a $k\ge l$ then $\mathrm{Rist}_{G_\mathcal{S}}(l)\ge \mathrm{St}_{G_\mathcal{S}}(k)$ and $G_\mathcal{S}$ is branch. Conversely, if $G_\mathcal{S}$ is branch then for every $l\ge 1$, every $\mathrm{Rist}_{G_\mathcal{S}}(l)$ is open and there exists $k\ge l$ such that $\mathrm{Rist}_{G_\mathcal{S}}(l)\ge \mathrm{St}_{G_\mathcal{S}}(k)$. In particular $\mathrm{Rist}_{G_\mathcal{S}}(l)\ge S_n$ and thus $\mathrm{Rist}_{S_n}(l)=S_n$ for all $n\ge k$.
\end{proof}

\begin{proposition}
\label{proposition: sn for GS}
    If the group $G_\mathcal{S}$ is self-similar then
    $$s_n=\log|S_{n-1}\times \overset{m}{\dotsb}\times S_{n-1}:\psi(S_n)|.$$
\end{proposition}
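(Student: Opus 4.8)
The plan is to compute $s_n$ directly from its definition
$$s_n = m\log|\mathrm{St}_{G_\mathcal{S}}(n-1):\mathrm{St}_{G_\mathcal{S}}(n)| - \log|\mathrm{St}_{G_\mathcal{S}}(n):\mathrm{St}_{G_\mathcal{S}}(n+1)|$$
using the structural facts already established for $G_\mathcal{S}$, namely that $\mathrm{St}_{G_\mathcal{S}}(k)=\prod_{j\ge k}S_j$ and hence $\mathrm{St}_{G_\mathcal{S}}(k)/\mathrm{St}_{G_\mathcal{S}}(k+1)\cong S_k$. This immediately gives $|\mathrm{St}_{G_\mathcal{S}}(k):\mathrm{St}_{G_\mathcal{S}}(k+1)| = |S_k|$ for every $k$, so that
$$s_n = m\log|S_{n-1}| - \log|S_n| = \log\frac{|S_{n-1}|^m}{|S_n|} = \log\frac{|S_{n-1}\times\overset{m}{\dotsb}\times S_{n-1}|}{|S_n|}.$$

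The remaining point is to identify this with $\log|S_{n-1}\times\overset{m}{\dotsb}\times S_{n-1}:\psi(S_n)|$, and for this I would invoke self-similarity. By \cref{proposition: self-similar is of the form G_L^S}, self-similarity of $G_\mathcal{S}$ gives $\psi(S_n)\le S_{n-1}\times\overset{m}{\dotsb}\times S_{n-1}$, so the index on the right is well defined. Moreover the map $\psi=\psi_1$ restricted to $S_n$ is injective: an element of $S_n$ lies in $\mathrm{St}(1)$ (since its only nontrivial labels are at level $n\ge 1$), and $\psi_1$ is an isomorphism on $\mathrm{St}(1)$, so $|\psi(S_n)| = |S_n|$. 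Combining the two displays,
$$s_n = \log\frac{|S_{n-1}\times\overset{m}{\dotsb}\times S_{n-1}|}{|\psi(S_n)|} = \log|S_{n-1}\times\overset{m}{\dotsb}\times S_{n-1}:\psi(S_n)|,$$
which is exactly the claimed formula.

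I do not anticipate a genuine obstacle here: the statement is essentially a bookkeeping consequence of the splitting $\mathrm{St}_{G_\mathcal{S}}(k)=\prod_{j\ge k}S_j$ together with the injectivity of $\psi$ on $S_n$. The only mild subtlety worth stating carefully is why $\psi|_{S_n}$ is injective — it rests on $S_n\le\mathrm{St}(1)$ for $n\ge 1$ — and one should note in passing that $r_1=0$ and $S_0$ play no role since $s_n$ is defined only for $n\ge 1$. Alternatively, one could bypass even this and simply quote \cref{align: rn and sn self-similar}, which expresses $s_n = \log|\mathfrak{S}_n:\mathfrak{S}_{n+1}|$ for weakly self-similar groups, together with the identification of the filtration $\{\mathfrak{S}_n\}$ under the splitting of $G_\mathcal{S}$; but the direct index computation above is shorter and self-contained.
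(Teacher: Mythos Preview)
Your proof is correct and follows essentially the same approach as the paper: both compute $s_n = m\log|S_{n-1}| - \log|S_n|$ from the isomorphism $\mathrm{St}_{G_\mathcal{S}}(k)/\mathrm{St}_{G_\mathcal{S}}(k+1)\cong S_k$, and then identify this with the claimed index using $\psi(S_n)\le S_{n-1}\times\overset{m}{\dotsb}\times S_{n-1}$ from \cref{proposition: self-similar is of the form G_L^S}. Your explicit justification of $|\psi(S_n)|=|S_n|$ via injectivity of $\psi$ on $\mathrm{St}(1)$ is a detail the paper leaves implicit, but otherwise the arguments coincide.
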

\begin{proof}
    By construction $\mathrm{St}_{G_\mathcal{S}}(n-1)/\mathrm{St}_{G_\mathcal{S}}(n)\cong S_{n-1}$ for every $n\ge 1$ and therefore
    \begin{align*}
        s_n&=m\log|\mathrm{St}_{G_\mathcal{S}}(n-1):\mathrm{St}_{G_\mathcal{S}}(n)|-\log|\mathrm{St}_{G_\mathcal{S}}(n):\mathrm{St}_{G_\mathcal{S}}(n+1)|\\
        &=m\log|S_{n-1}|-\log|S_n|=\log|S_{n-1}\times\overset{m}{\dotsb}\times S_{n-1}:\psi(S_n)|,
    \end{align*}
    where the last equality above follows from self-similarity by \cref{proposition: self-similar is of the form G_L^S}.
\end{proof}

\section{Restricted Hausdorff spectra}
\label{section: restricted Hausdorff spectra}

In this section we restrict to subgroups of $q$-adic automorphisms, i.e. we shall consider a defining sequence $\mathcal{S}$ such that $G_\mathcal{S}\le \Gamma_q$. The spectra (i)--(iii) in \cref{Theorem: restricted Hausdorff spectra p-adic automorphisms} are based on the construction in \cref{lemma: construction of Sn for the ss and wrb cases}. The key in \cref{lemma: construction of Sn for the ss and wrb cases} is to use the condition $\psi(S_n)\le S_{n-1}\times\overset{m}{\ldots}\times S_{n-1}$ in \cref{proposition: self-similar is of the form G_L^S} together with the properties of finite $p$-groups to construct a sequence $\mathcal{S}$ which yields a group $G_\mathcal{S}$ with a prescribed sequence $\{s_n\}_{n\ge 1}$. For the self-similar branch Hausdorff spectrum the construction is based on similar ideas but the focus is shifted to the branchness condition in \cref{proposition: Sn direct products GLS branch}. 

We denote by $D_k(G):=\{(g,\overset{k}{\dotsc},g)~|~g\in G\}\le G\times\overset{k}{\ldots}\times G$ the $k$-diagonal embedding of any group $G$. For $G=\langle g\rangle$ cyclic we shall just write $D_k(g)$.

\subsection{The self-similar, level-transitive and super strongly fractal Hausdorff spectrum}

A natural way to specify the subgroups $S_n$ of the defining sequence $\mathcal{S}$ is to consider $S_0:=\langle \sigma\rangle$ and choose inductively for $n\ge 1$ a subgroup $S_n$ such that $\psi(S_n)\le S_{n-1}\times \overset{q}{\ldots}\times S_{n-1}$ and $\psi(S_n)$ normal in the semidirect product $(S_{n-1}\times \overset{q}{\ldots}\times S_{n-1})\rtimes A_{n-1}$. A subgroup $S_n$ chosen in this way is clearly normalized by $A_{n-1}$ so $\mathcal{S}$ is well defined and the corresponding group $G_\mathcal{S}$ will be self-similar by \cref{proposition: self-similar is of the form G_L^S}. Furthermore, since we want $0\le s_n\le q-1$ so that the sequence $\{s_n\}_{n\ge q}$ yields a $q$-base expansion of a number $\gamma\in [0,1]$ by \cref{proposition: sn for GS}, we may choose $S_n$ such that $\psi(S_n)\ge (H_{n-1}\times\overset{q}{\ldots}\times H_{n-1})D_{q}(S_{n-1})$, where $H_{n-1}$ is a $q$-index normal subgroup of $S_{n-1}$. The fact that $\psi(S_n)$ is an overgroup of the diagonal subgroup $D_q(S_{n-1})$ for every $n\ge 1$ makes $G_\mathcal{S}$ both super strongly fractal and level-transitive by \cref{proposition: self-similar is of the form G_L^S} and \cref{corollary: S transitive G transitive} respectively. We shall use the properties of abelian-by-cyclic groups and of finite $p$-groups to make this construction.

\begin{lemma}
\label{lemma: commutators in Cq wr Cq}
    Let $W=A\langle x\rangle$ where $A$ is an abelian normal subgroup. Then for any $N\le A$ normal in $W$ we have $|N:[N,W]|=|C_{N}(x)|$. In particular, for $W=\langle \sigma\rangle \wr\langle \sigma\rangle$ and $N\le \langle \sigma\rangle \times\overset{q}{\ldots}\times \langle\sigma\rangle$ normal in $W$ such that $N\ge D_q(\sigma)$, we get  $|N:[N,W]|=q$.
\end{lemma}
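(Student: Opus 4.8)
The plan is to prove the general statement first and then specialize. Suppose $W = A\langle x\rangle$ with $A$ abelian and normal in $W$, and let $N \le A$ be normal in $W$. Since $N$ is abelian, the conjugation action of $W$ on $N$ factors through $W/A \cong \langle \bar x\rangle$, so $[N,W] = [N,\langle x\rangle]$. The key observation is to consider the endomorphism $\delta\colon N \to N$ given by $\delta(n) = [n,x] = n^{-1}n^x$; because $N$ is abelian and $x$-invariant, $\delta$ is a group homomorphism. Its image is exactly $[N,\langle x\rangle] = [N,W]$ and its kernel is exactly $C_N(x)$, the elements of $N$ centralized by $x$. Hence the first isomorphism theorem gives $N/[N,W] \cong N/\ker\delta \cong \mathrm{im}\,\delta = [N,W]$... wait, more carefully: $N/\ker\delta \cong \mathrm{im}\,\delta$, so $|N : C_N(x)| = |[N,W]|$, which is not immediately what we want. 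Instead I would argue by finiteness: $N$ is finite (it sits inside $\langle\sigma\rangle^{\times q}$ in the application, and in the general statement we may take $A$ finite, as that is the only case used), so $|N| = |\ker\delta|\cdot|\mathrm{im}\,\delta| = |C_N(x)|\cdot|[N,W]|$, giving $|N:[N,W]| = |C_N(x)|$ directly. (If one wants the statement for infinite $A$ one restricts attention to the case at hand where everything is finite; I would simply state the lemma under the standing finiteness hypothesis, or note $A$ finite suffices for all uses.)

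For the ``in particular'' clause, take $W = \langle\sigma\rangle\wr\langle\sigma\rangle = (\langle\sigma\rangle\times\overset{q}{\cdots}\times\langle\sigma\rangle)\rtimes\langle x\rangle$ where $x$ cyclically permutes the $q$ coordinates, so $A = \langle\sigma\rangle^{\times q}$ is abelian and normal. Let $N \le A$ be normal in $W$ with $D_q(\sigma) \le N$. By the general statement, $|N:[N,W]| = |C_N(x)|$, so it remains to compute $C_N(x)$. An element $(a_1,\dots,a_q) \in A$ is centralized by $x$ precisely when $a_1 = a_2 = \cdots = a_q$, i.e. $C_A(x) = D_q(\langle\sigma\rangle)$, which has order $q$. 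Now $C_N(x) = N \cap C_A(x) = N \cap D_q(\langle\sigma\rangle)$; since $D_q(\sigma) \le N$ by hypothesis, we get $D_q(\langle\sigma\rangle) \le N$, hence $C_N(x) = D_q(\langle\sigma\rangle)$, which has order $q$. Therefore $|N:[N,W]| = q$.

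The computation is essentially routine; the one point requiring a little care is the reduction of $[N,W]$ to $[N,\langle x\rangle]$ and the verification that $\delta$ is a homomorphism, both of which use crucially that $N$ is abelian and $x$-invariant — so the main (minor) obstacle is just making sure the abelianness hypothesis on $A$ is used correctly and that the finiteness needed to pass from $|N| = |\ker\delta||\mathrm{im}\,\delta|$ to the index formula is in force. Since in every application $A$ is a finite abelian $p$-group this is harmless, and I would phrase the lemma accordingly.
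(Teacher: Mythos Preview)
Your proof is correct and follows essentially the same approach as the paper: define the homomorphism $n\mapsto [n,x]$ on the abelian group $N$, identify its kernel as $C_N(x)$ and its image as $[N,W]=[N,x]$, and conclude by counting. The paper is slightly terser and does not dwell on the finiteness hypothesis (it is implicit there as well), but the argument is the same; your computation of $C_N(x)=D_q(\langle\sigma\rangle)$ in the wreath-product case also matches the paper's.
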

\begin{proof}
Since $N\le A$ is abelian, we get a group homomorphism $f:N\to [N,W]$ given by $n\mapsto [n,x]$. As $W$ is abelian-by-cyclic we get $[N,W]=[N,x]$, and the map $f$ is surjective. Note that the kernel of $f$ is just the centralizer of $x$ in $N$ by construction and since $f$ is surjective we obtain $|N:[N,W]|=|\ker f|=|C_{N}(x)|$. For $W=\langle \sigma\rangle \wr\langle \sigma\rangle$ the result follows from the above as $W$ is abelian-by-cyclic and $C_N(\sigma)=D_q(\sigma)$ is of order $q$.
\end{proof}

\begin{lemma}
\label{lemma: construction of Sn for the ss and wrb cases}
Let $\{\mu_n\}_{n\ge 1}$ be a sequence of integers such that $0\le \mu_n\le q-1$ for all $n\ge 1$. Then the sequence $\mathcal{S}$ can be chosen so that for every $n\ge 1$:
\begin{enumerate}[\normalfont(i)]
    \item there is an $A_{n-1}$-invariant subgroup $H_n$ of index $q$ in $S_n$ and such that we have both $H_n\ge [S_n,A_{n-1}]$ and  $\psi(H_n)\ge H_{n-1}\times\overset{q}{\ldots}\times H_{n-1}$;
    \item we have $\big(H_{n-1}\times\overset{q}{\ldots}\times H_{n-1}\big)D_q(S_{n-1}) \le \psi(S_n) \le S_{n-1} \times\overset{q}{\ldots}\times S_{n-1}$ and $\psi(S_n)$ has index $q^{\mu_n}$ in $S_{n-1} \times\overset{q}{\ldots}\times S_{n-1}$. 
\end{enumerate}
What is more, the resulting closed group $G_\mathcal{S}$ is self-similar, super strongly fractal and level-transitive.
\end{lemma}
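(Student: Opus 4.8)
The plan is to build the defining sequence $\mathcal{S}$ by induction on $n$, carrying along an auxiliary subgroup $H_n\trianglelefteq S_n$ of index $q$ as an inductive hypothesis so that the construction can be continued. The base case is $S_0=\langle\sigma\rangle$, $H_0=1$; here $A_{-1}$ is trivial, so $H_0\ge[S_0,A_{-1}]$ and $\psi(H_0)\ge(\text{empty product})$ hold vacuously, and $|S_0:H_0|=q$. For the inductive step, assume $S_{n-1}$ and $H_{n-1}\trianglelefteq S_{n-1}$ of index $q$ with $H_{n-1}\ge[S_{n-1},A_{n-2}]$ and $\psi(H_{n-1})\ge H_{n-2}\times\overset{q}{\dotsb}\times H_{n-2}$ have been constructed. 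First I would work inside the finite $p$-group $W:=\big(S_{n-1}\times\overset{q}{\dotsb}\times S_{n-1}\big)\rtimes A_{n-1}$, noting $A_{n-1}=S_{n-1}\rtimes A_{n-2}$ acts on the base group, and the $q$-fold "diagonal plus coordinate" action of $\sigma\in S_{n-1}\le A_{n-1}$ cyclically permutes the $q$ direct factors. Set $B_n:=\big(H_{n-1}\times\overset{q}{\dotsb}\times H_{n-1}\big)D_q(S_{n-1})$. The key subclaims are: $(a)$ $B_n$ is $A_{n-1}$-invariant and normal in $W$; $(b)$ $B_n$ has index exactly $q$ in $S_{n-1}\times\overset{q}{\dotsb}\times S_{n-1}$; $(c)$ one can interpolate, for any $0\le\mu_n\le q-1$, an $A_{n-1}$-invariant (indeed $W$-normal) subgroup $\psi(S_n)$ with $B_n\le\psi(S_n)\le S_{n-1}\times\overset{q}{\dotsb}\times S_{n-1}$ of index $q^{\mu_n}$ in the full product.

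For $(a)$: $D_q(S_{n-1})$ is normalized by $A_{n-1}$ since $A_{n-2}$ acts diagonally and $\sigma$ permutes coordinates of a diagonal element trivially up to the diagonal itself; $H_{n-1}\times\overset{q}{\dotsb}\times H_{n-1}$ is $A_{n-1}$-invariant because $H_{n-1}\trianglelefteq S_{n-1}$ is $A_{n-2}$-invariant by hypothesis and the coordinate permutation preserves the product. For $(b)$: modulo $H_{n-1}\times\overset{q}{\dotsb}\times H_{n-1}$ the quotient $\big(S_{n-1}\times\overset{q}{\dotsb}\times S_{n-1}\big)/\big(H_{n-1}\times\overset{q}{\dotsb}\times H_{n-1}\big)\cong C_q^{\,q}$, and the image of $D_q(S_{n-1})$ is the diagonal $C_q$, so the index of $B_n$ in the full product is $q^q/q=q^{q-1}$ — wait; I must recount. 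Actually $|S_{n-1}\times\overset{q}{\dotsb}\times S_{n-1}:B_n|=|C_q^q:D_q(C_q)|=q^{q-1}$, so I should instead take $\psi(H_n)$-type data into account differently: the right normalization is to let $\psi(S_n)\ge B_n$ be chosen of index $q^{\mu_n}$ in the full product where now $0\le\mu_n\le q-1$ forces $\psi(S_n)$ to sit between $B_n$ (index $q^{q-1}$, but we only need $\mu_n\le q-1$) and the whole product; by the structure of the elementary abelian quotient $C_q^q/D_q(C_q)\cong C_q^{q-1}$ one picks an $\langle\sigma\rangle$-submodule of codimension $\mu_n$, which exists since $C_q^{q-1}$ as an $\mathbb{F}_q\langle\sigma\rangle$-module is uniserial (it is the augmentation-type module for the $q$-cycle), giving a unique $\langle\sigma\rangle$-invariant subspace of each codimension $0,1,\dots,q-1$; pulling back gives the desired $W$-normal $\psi(S_n)$. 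Then define $S_n:=\psi^{-1}(\psi(S_n))\cap(\text{finitary of depth }n+1)$, i.e. the subgroup of $\mathrm{St}(n)$ whose image under $\psi$ is $\psi(S_n)$; since $\psi$ restricted to these finitary automorphisms is an isomorphism onto $S_{n-1}\times\overset{q}{\dotsb}\times S_{n-1}$'s ambient group, $S_n$ is well-defined, $A_{n-1}$-invariant, and satisfies (ii). For (i), set $H_n:=\psi^{-1}\big((H_{n-1}\times\overset{q}{\dotsb}\times H_{n-1})\cdot(\psi(S_n)\cap D_q(S_{n-1}))\big)$ or, more simply, let $H_n$ be the preimage of the unique index-$q$ $A_{n-1}$-invariant subgroup of $S_n$ lying above $\psi(H_{n-1})\times\overset{q}{\dotsb}$; one checks $H_n\ge[S_n,A_{n-1}]$ because $S_n/H_n$ is a central $C_q$ in $S_n/H_n\rtimes\dots$ by construction (choose $H_n$ to contain the commutator subgroup, which is forced since $[S_n,A_{n-1}]$ maps into $[\text{product}\rtimes A_{n-1}]$-commutators which lie in $B_n\le$ the chosen index-$q$ subgroup), and $\psi(H_n)\ge H_{n-1}\times\overset{q}{\dotsb}\times H_{n-1}$ by the defining inclusion.

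Finally, the conclusion: $G_\mathcal{S}$ is self-similar by \cref{proposition: self-similar is of the form G_L^S} since $\psi(S_n)\le S_{n-1}\times\overset{q}{\dotsb}\times S_{n-1}$ for all $n\ge1$ by (ii); it is level-transitive by \cref{corollary: S transitive G transitive} because for every $n$ the vertex $v$ at level $n$ can be taken so that $\psi_v(S_n)\ge\psi_v(D_q(S_{n-1}))$ contains $\langle\sigma\rangle$-conjugates realizing the full cycle — more precisely $\psi(S_n)\supseteq D_q(S_{n-1})\ni D_q(\sigma)$, and projecting $D_q(\sigma)$ to the $v$-coordinate of its section gives $\sigma$, which generates a transitive subgroup of $\mathrm{Sym}(q)$; and it is super strongly fractal by the last part of \cref{proposition: self-similar is of the form G_L^S}, since for every vertex $x$ at level $1$ we have $\psi_x(S_n)=S_{n-1}$: indeed $\psi(S_n)\ge(H_{n-1}\times\overset{q}{\dotsb}\times H_{n-1})D_q(S_{n-1})$, and projecting to any coordinate $x$ gives $\psi_x(S_n)\ge H_{n-1}$ together with the diagonal contribution $S_{n-1}$, whence $\psi_x(S_n)=S_{n-1}$.

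\textbf{Main obstacle.} The delicate point — and the one I would spend the most care on — is subclaim $(c)$: producing, for \emph{each} prescribed value $\mu_n\in\{0,\dots,q-1\}$, a subgroup $\psi(S_n)$ that is simultaneously $A_{n-1}$-normal (not merely $\langle\sigma\rangle$-invariant, since $A_{n-2}$ also acts) and of the exact index $q^{\mu_n}$, while keeping the inductive data $H_n\ge[S_n,A_{n-1}]$ and $\psi(H_n)\ge H_{n-1}\times\overset{q}{\dotsb}\times H_{n-1}$ intact. This requires understanding the $A_{n-1}$-module structure of $\big(S_{n-1}\times\overset{q}{\dotsb}\times S_{n-1}\big)/\big[\cdot,A_{n-1}\big]$ well enough to locate a chain of invariant subgroups of every intermediate index up to $q^{q-1}$; the $\langle\sigma\rangle$-uniseriality of the permutation-module quotient is the crux, and one must check the residual $A_{n-2}$-action does not obstruct it, which follows because $A_{n-2}$ acts diagonally and hence commutes with the coordinate-permutation action in the relevant quotient.
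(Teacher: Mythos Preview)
Your overall inductive scheme matches the paper's: same base case $S_0=\langle\sigma\rangle$, $H_0=1$, same inductive data, and the same three conclusions via \cref{proposition: self-similar is of the form G_L^S} and \cref{corollary: S transitive G transitive}. The structural subclaims (a)--(c) are also the right ones. However, there are two genuine gaps in the execution of (c) and of the construction of $H_n$.

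First, your chain argument treats $C_q^{q}/D_q(C_q)\cong C_q^{\,q-1}$ as an ``elementary abelian'' $\mathbb{F}_q[\langle\sigma\rangle]$-module and invokes uniseriality. This is only correct when $q=p$; for $q=p^k$ with $k>1$ the group $C_q$ is not elementary abelian and there is no $\mathbb{F}_q$-vector-space structure to speak of, so the uniseriality claim as stated does not apply. The paper avoids this entirely by using \cref{lemma: commutators in Cq wr Cq}: for any $W$-normal $N\le\langle\sigma\rangle^q$ with $N\ge D_q(\sigma)$ one has $|N:[N,W]|=q$, and combined with the general refinement property of normal subgroups in finite $p$-groups this produces the chain $N_0>N_1>\dots>N_{q-1}=\psi^{-1}(D_q(S_{n-1}))$ with each step of index $q$, independently of whether $q=p$ or not.

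Second, your definition of $H_n$ is too vague (``or, more simply, let $H_n$ be the preimage of the unique index-$q$ $A_{n-1}$-invariant subgroup\ldots'') and you do not verify that it has index \emph{exactly} $q$ nor that it contains $[S_n,A_{n-1}]$. The paper's choice is clean: working in the quotient $Q=\psi^{-1}(S_{n-1}^q)/\psi^{-1}(H_{n-1}^q)$, the inductive hypothesis $H_{n-1}\ge[S_{n-1},A_{n-2}]$ forces $A_{n-2}$ to act trivially on $Q$, so $[Q,A_{n-1}]=[Q,A_0]$ and one is reduced to the $C_q\wr C_q$ situation of \cref{lemma: commutators in Cq wr Cq}. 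One then takes $H_n$ to be the lift of $[N,A_{n-1}]$ where $N$ is the chosen index-$q^{\mu_n}$ subgroup; \cref{lemma: commutators in Cq wr Cq} gives $|S_n:H_n|=q$ on the nose, and both $H_n\ge[S_n,A_{n-1}]$ and $\psi(H_n)\ge H_{n-1}^q$ are immediate from the definition. You allude to the reduction to $A_0$ at the very end (``$A_{n-2}$ acts diagonally\ldots in the relevant quotient''), but you never actually invoke $H_{n-1}\ge[S_{n-1},A_{n-2}]$ to make that reduction work, and without it you cannot pass from $\langle\sigma\rangle$-invariance to full $A_{n-1}$-invariance of the intermediate subgroups.
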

\begin{proof}
We fix $S_0:=\langle \sigma\rangle$ and $H_0=1$. Now for $n=1$ the normal subgroup $\psi^{-1}(D_q(S_0))$ is normalized by $A_0$ and it has index $q^{q-1}$ in $\psi^{-1}(S_{0}\times \overset{q}{\ldots}\times S_0)$ so by the properties of $p$-groups we may find a chain of normal subgroups of the semidirect product $\psi^{-1}\big(S_{0}\times \overset{q}{\ldots}\times S_0\big)\rtimes A_0$ such that
$$N_{q}:=1< N_{q-1}:=\psi^{-1}(D_q(S_0))<N_{q-2}<\dotsb <N_{0}:=\psi^{-1}(S_0\times\overset{q}{\dotsb}\times S_0)$$
and $|N_{k}:N_{k+1}|=q$ for $k=0,\dots,q-1$. We then set $S_1:=N_{\mu_1}$. By \cref{lemma: commutators in Cq wr Cq} we have $|S_1:[S_{1}, A_0]|=q$ and we may set $H_1:= [S_{1}, A_0]$. This choice of $S_1$ and $H_1$ satisfies both conditions (i) and (ii) for $n=1$.

Now by induction we have a subgroup $S_{n-1}$ normalized by $A_{n-2}$ and a subgroup $H_{n-1}$ satisfying both (i) and~(ii) . Then for the quotient 
$$Q:=\frac{\psi^{-1}(S_{n-1}\times \overset{q}{\dotsb}\times S_{n-1})}{\psi^{-1}(H_{n-1}\times\overset{q}{\dotsb}\times H_{n-1})}$$
we have $[Q,A_{n-1}]=[Q,A_0]$ as $\psi(A_{n-1})\le (A_{n-2}\times\overset{q}{\ldots}\times A_{n-2})A_0$ and the action by conjugation of $A_{n-2}$ on the quotient $S_{n-1}/ H_{n-1}$ is trivial. Thus, the subgroup $\psi^{-1}\big(H_{n-1}\times\overset{q}{\ldots}\times H_{n-1}\big) \psi^{-1}(D_q(S_{n-1}))$ is normalized by $A_{n-1}$. Since this subgroup clearly has index $q^{q-1}$ in $\psi^{-1}(S_{n-1}\times \overset{q}{\ldots}\times S_{n-1})$, the same reasoning as in the case $n=1$ produces a subgroup $N\le Q$ normalized by $A_{n-1}$, of index $q^{\mu_n}$ in $Q$ and such that $|N:[N,A_{n-1}]|=q$ by \cref{lemma: commutators in Cq wr Cq}. We then set $S_n$ and $H_n$ to be the lifts of $N$ and $[N,A_{n-1}]$ to $\psi^{-1}(S_{n-1}\times\overset{q}{\ldots}\times S_{n-1})$ respectively. This choice of $S_n$ and $H_n$ satisfies both (i) and (ii).

The self-similarity and the super strong fractality of the group $G_\mathcal{S}$ follow from \cref{proposition: self-similar is of the form G_L^S}. The level-transitivity of the group $G_\mathcal{S}$ follows by the transitivity of the local action of the group $S_n$ on the set $\{1,\dotsc,q\}$ by \cref{corollary: S transitive G transitive}, since $S_n\ge \psi^{-1}(D_q(S_{n-1}))\ge \dotsb\ge \psi_n^{-1}(D_{q^n}(\sigma))$.
\end{proof}

\begin{proof}[{Proof of \cref{Theorem: restricted Hausdorff spectra p-adic automorphisms}\textcolor{teal}{\normalfont(i)}}]
    Let $\gamma\in [0,1]$. Then the $q$-base expansion of $1-\gamma$ is
    $$1-\gamma=\sum_{n=1}^\infty\frac{\mu_n}{q^n},$$
    where $0\le \mu_n\le q-1$. We obtain a profinite self-similar, super strongly fractal and level-transitive group $G_\mathcal{S}$ by specifying a sequence $\mathcal{S}$ where
    $$|S_{n-1}\times\overset{q}{\dotsb}\times S_{n-1}:\psi(S_n)|=q^{\mu_n}$$
    using \cref{lemma: construction of Sn for the ss and wrb cases}.
    Since $G_\mathcal{S}$ is closed self-similar and $\log|(G_\mathcal{S})_1|=\log|(\Gamma_q)_1|=1$ we may compute its Hausdorff dimension using the formula in \cref{theorem: Formula Hausdorff dimension} as
\begin{align*}
    \mathrm{hdim}_{\Gamma_q}(G_\mathcal{S})=1-S_{G_\mathcal{S}}(1/q)=1-\sum_{n=1}^\infty\frac{s_n}{q^n}
\end{align*}
where
$$s_n=\log|S_{n-1}\times\overset{q}{\dotsb}\times S_{n-1}:\psi(S_n)|=\mu_n$$
by \cref{proposition: sn for GS} yielding
\begin{align*}
    \mathrm{hdim}_{\Gamma_q}(G_\mathcal{S})&=1-\sum_{n=1}^\infty\frac{\mu_n}{q^n}=1-(1-\gamma)=\gamma.\qedhere
\end{align*}
\end{proof}

\subsection{The weakly regular branch Hausdorff spectrum}
We first prove that the Hausdorff dimension of a closed weakly regular branch group acting on the $m$-adic tree must be positive for any $m\ge 2$. This generalizes Bartholdi's corresponding result for closed regular branch groups \cite[Proposition 2.7]{BartholdiHausdorff}.

\begin{proposition}
\label{proposition: weakly branch Hausdorff}
The closure of a non-trivial branching group has positive Hausdorff dimension. In particular, the closure of a weakly regular branch group has positive Hausdorff dimension.
\end{proposition}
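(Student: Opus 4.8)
The plan is to use the formula in \cref{theorem: Formula Hausdorff dimension} together with the alternative description of the sequences $\{r_n\}_{n\ge 1}$ and $\{s_n\}_{n\ge 1}$ for branching groups given in \cref{align: rn and sn branching}. Let $K\le\mathfrak{A}$ be a non-trivial branching group, so $\psi(\mathrm{St}_K(1))\ge K\times\overset{m}{\dotsb}\times K$. Passing to the closure changes neither the sequences nor the Hausdorff dimension, so we may assume $K$ is closed. By \cref{proposition: self-similar and branching formula} the sequence $\{r_n\}_{n\ge 1}$ is non-positive, so $K$ satisfies the hypotheses of \cref{theorem: Formula Hausdorff dimension} and we have
$$\mathrm{hdim}_{\mathfrak{A}}(K)=\frac{1}{\log m!}\Big(\log|K_1|-(m-1)L\Big)\quad\text{where}\quad L=\sum_{n\ge 1}\frac{r_n}{m^n}\le 0.$$
Since $-(m-1)L\ge 0$, positivity of $\mathrm{hdim}_{\mathfrak{A}}(K)$ would follow immediately if $\log|K_1|>0$, i.e. if $K$ acts non-trivially at the first level. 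The genuine content is therefore the case where $K_1$ is trivial — but then one should argue that there is a deepest level $d$ at which $K$ acts trivially, replace $K$ by the branching group $\psi_d(\mathrm{St}_K(d))$ acting non-trivially at \emph{its} first level (using self-containment of branching groups to see this is again branching and non-trivial on level one), and note that the Hausdorff dimension of $K$ in $\mathfrak{A}$ is bounded below by a positive multiple of that of this truncated group, since the orders $|K_n|$ grow at least as fast.

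Concretely, the key estimate I would extract is a uniform lower bound for $\{L_n\}$. From \cref{align: rn and sn branching} we have $r_n = -\log|\mathrm{St}_K(1):\mathfrak{B}_n|$ where $\mathfrak{B}_n=\psi(\mathrm{St}_K(n))(K\times\overset{m}{\dotsb}\times K)$, and the chain $\mathfrak{B}_1\ge\mathfrak{B}_2\ge\dotsb$ stabilises at $\psi(\mathrm{St}_K(1))\cap$ (closure), with $\mathfrak{B}_1=\mathrm{St}_K(1)$ exactly when... — in any case $|r_n|$ is bounded above by $\log|\mathrm{St}_K(1):\psi(\mathrm{St}_K(1))\cap(K\times\dots\times K)|$, a \emph{fixed finite number} independent of $n$, call it $c$. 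Hence $0\le -L = \sum_n |r_n|/m^n \le c\sum_n m^{-n} = c/(m-1)$, so $-(m-1)L \le c$ is finite. This alone does not give positivity; positivity must come instead from observing that the \emph{whole} defect $\log|K_1|-(m-1)L$ cannot vanish unless $K$ is trivial. If $\log|K_1|=0$ then $\mathrm{St}_K(1)=K$, and then $\mathfrak{B}_1=K\times\dots\times K$ while $\mathfrak{B}_n\supseteq\psi(\mathrm{St}_K(n))$; the containment $\psi(\mathrm{St}_K(1))\supseteq K\times\dots\times K$ combined with $\mathrm{St}_K(1)=K$ forces $\psi(K)\supseteq K\times\dots\times K$, and iterating, $|K_n|\ge m^{n-1}\log|K_1|$ — which is again $0$, so one really must descend to the first level $d$ where $K$ acts non-trivially and run the argument there.

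So the cleanest route, and the one I would write up, is: first reduce to $K$ closed and non-trivial on level $1$ by replacing $K$ with the iterated section group $\psi_d(\mathrm{St}_K(d))$ at the first non-trivial level $d$ (which is branching and satisfies $\log|K_1|\ge\log 2 = 1/\log\dots$ — at least a positive constant), noting $\mathrm{hdim}_{\mathfrak{A}}(\overline K)\ge m^{-d}\,\mathrm{hdim}_{\mathfrak{A}}(\overline{\psi_d(\mathrm{St}_K(d))})$ since level-$n$ congruence quotients of $K$ surject onto those of the section group shifted by $d$; then apply \cref{theorem: Formula Hausdorff dimension} and \cref{proposition: self-similar and branching formula} to get $\mathrm{hdim}_{\mathfrak{A}}(\overline{\psi_d(\mathrm{St}_K(d))}) = \frac{1}{\log m!}(\log|K'_1| - (m-1)L')$ with $L'\le 0$ and $\log|K'_1|>0$, hence strictly positive. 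The final assertion about weakly regular branch groups is then immediate, since by definition such a group contains a non-trivial branching subgroup $K$, and $\mathrm{hdim}_{\mathfrak{A}}(G)\ge\mathrm{hdim}_{\mathfrak{A}}(\overline K)>0$. The main obstacle is handling the level-$1$-trivial case carefully — specifically verifying that the section group at the first non-trivial level is genuinely branching and that the Hausdorff dimension loses at most the explicit factor $m^{-d}$ under this truncation; everything else is a direct substitution into the already-proven formula.
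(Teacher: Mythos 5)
Your setup is correct and matches the paper's up to the decisive point: reduce to $K$ closed, apply \cref{theorem: Formula Hausdorff dimension} and \cref{proposition: self-similar and branching formula} to get $\mathrm{hdim}_{\mathfrak{A}}(\overline K)=\frac{1}{\log m!}\big(\log|K_1|-(m-1)L\big)$ with $L\le 0$, and observe that only the case $|K_1|=1$ needs work (and that case does occur: $K=\mathrm{St}_{\mathfrak{A}}(1)$ is a non-trivial branching group). But your treatment of that case has a genuine gap. You replace $K$ by its section group at the first level $d$ where it acts non-trivially and assert this is ``again branching''; that claim is never justified and is not obvious. Given $l=g|_v\in\psi_v(K)$, branchingness of $K$ supplies an element of $K$ whose section at $v$ is $g$ and whose other level-$d$ sections are trivial, but its sections one level further down are the $g|_{vx_j}$, which need not be trivial; to kill those you would need rigid elements of $K$ whose sections lie in $\psi_v(K)$ rather than in $K$ --- exactly the branchingness of the section group you are trying to establish, so the natural argument is circular. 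Note also that $\psi_v(K)$ may be strictly larger than $K$, so you cannot simply quote the hypothesis. You flag this yourself as ``the main obstacle'', but you do not resolve it, so the proposal is not yet a proof. (The index inequality $\mathrm{hdim}_{\mathfrak{A}}(\overline K)\ge m^{-d}\,\mathrm{hdim}_{\mathfrak{A}}(\overline{\psi_v(K)})$ and the reduction of the weakly regular branch case to the branching case are both fine.)

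The paper closes the case $|K_1|=1$ differently, and you actually brush past the key point in your middle paragraph: if $\mathrm{hdim}_{\mathfrak{A}}(\overline K)=0$ then \emph{both} non-negative summands vanish, so besides $|K_1|=1$ you also get $L=0$, hence $r_n=0$ for every $n$ (all $r_n$ are non-positive). By \cref{align: rn and sn branching} and closedness of $K$ this forces the \emph{equality} $\psi(\mathrm{St}_K(1))=K\times\overset{m}{\dotsb}\times K$, not merely the containment ``$\supseteq$'' that you iterate (which, as you note, yields nothing). Together with $\mathrm{St}_K(1)=K$ this gives $\psi(\mathrm{St}_K(n))=\mathrm{St}_K(n-1)\times\overset{m}{\dotsb}\times\mathrm{St}_K(n-1)$ and hence $|K_n|=|K_1|\cdot|K_{n-1}|^m=1$ for all $n$ by induction; equivalently, $K$ acts trivially on level one and all its first-level sections lie in $K$, so it acts trivially on every level and $K=1$, contradicting non-triviality. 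Alternatively, your descent idea can be repaired without ever invoking section groups: branchingness gives $\log|K_n|\ge m\log|K_{n-1}|$, so if $|K_k|>1$ for some $k$ then $\log|K_n|\ge m^{n-k}\log|K_k|$ and $\mathrm{hdim}_{\mathfrak{A}}(\overline K)\ge (m-1)\log|K_k|/(m^k\log m!)>0$. Either fix is needed to complete the argument.
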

\begin{proof}
Let $K$ be a branching group. As the Hausdorff dimension of the closure of a subgroup gives a lower bound for the Hausdorff dimension of the closure of the group the second statement follows from the first one. Then let us prove the first statement. We may assume $K$ is closed as $\psi$ is continuous. Let us assume $K$ has trivial Hausdorff dimension. Then we prove that $K$ is trivial. By \cref{theorem: Formula Hausdorff dimension} and \cref{proposition: self-similar and branching formula} we must have both $K/\mathrm{St}_K(1)=1$ and $r_n=0$ for all $n\ge 1$ as $K$ has trivial Hausdorff dimension. The sequence $\{r_n\}_{n\ge 1}$ being constant and equal to zero is equivalent to $\psi(\mathrm{St}_K(1))=K\times\overset{m}{\ldots}\times K$ by the alternative definition of the sequence $\{r_n\}_{n\ge 1}$ for branching groups. Therefore $K$ must be trivial as its action on the first layer of the tree is trivial.
\end{proof}

\cref{lemma: construction of Sn for the ss and wrb cases} yields a group $G_\mathcal{S}$ with zero Hausdorff dimension in $\Gamma_q$ if and only if $\mu_n=q-1$ for all $n\ge 1$. Therefore, since by \cref{proposition: weakly branch Hausdorff} weakly regular branch closed subgroups have positive Hausdorff dimension in $\Gamma_q$ we may assume that $\mu_n$ is not equal to $q-1$ for every $n\ge 1$.

\begin{lemma}
    \label{lemma: non-triviality of Hn}
    Let $\{\mu_n\}_{n\ge 1}$ be a sequence of integers satisfying the assumptions in \cref{lemma: construction of Sn for the ss and wrb cases}. If there exists some $k\ge 1$ such that $\mu_k\ne q-1$, then the $A_{k-1}$-invariant subgroup $H_k$ in \cref{lemma: construction of Sn for the ss and wrb cases} is non-trivial. What is more,
    $$\psi_{n-k}(S_n)\ge H_k\times\overset{q^{n-k}}{\dotsb}\times H_k\ge D_{q^k}(\sigma)\times\overset{q^{n-k}}{\dotsb}\times D_{q^k}(\sigma)$$
    for all $n>k$.
\end{lemma}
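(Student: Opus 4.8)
The statement has three parts of increasing difficulty: that $H_k\ne 1$; that $\psi_{n-k}(S_n)\ge H_k\times\overset{q^{n-k}}{\dotsb}\times H_k$; and that $H_k\ge D_{q^k}(\sigma)$ (from which the last inclusion in the display is formal). I would treat them in this order. For non-triviality, note first that since $G_\mathcal{S}$ is self-similar, \cref{proposition: sn for GS} and condition (ii) of \cref{lemma: construction of Sn for the ss and wrb cases} give $s_n=\mu_n$; then the identity $\log|S_n|=q\log|S_{n-1}|-s_n$ (from the definition of $s_n$ and $\mathrm{St}_{G_\mathcal{S}}(j)/\mathrm{St}_{G_\mathcal{S}}(j+1)\cong S_j$) together with $\log|S_0|=1$ unrolls to $\log|S_n|=q^n-\sum_{i=1}^n\mu_iq^{n-i}$. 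If $\mu_k\le q-2$, bounding $\mu_i\le q-1$ for $i<k$ gives $\sum_{i=1}^k\mu_iq^{k-i}\le q^k-2$, hence $\log|S_k|\ge 2$; since $|S_k:H_k|=q$ by (i), we conclude $|H_k|\ge q>1$.

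For the telescoping inclusion I would show, by induction on $m$ in the range $1\le m\le n-1$, that $\psi_m(S_n)\ge H_{n-m}\times\overset{q^m}{\dotsb}\times H_{n-m}$. The base case $m=1$ is immediate from condition (ii), which gives $\psi(S_n)\ge\big(H_{n-1}\times\overset{q}{\dotsb}\times H_{n-1}\big)D_q(S_{n-1})$. For the step, since $S_n\le\mathrm{St}(n)\le\mathrm{St}(m+1)$ we have $\psi_{m+1}(S_n)=(\psi\times\overset{q^m}{\dotsb}\times\psi)(\psi_m(S_n))\ge\psi(H_{n-m})^{q^m}\ge H_{n-m-1}^{q^{m+1}}$, the final inequality being condition (i) applied at level $n-m$. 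Putting $m=n-k$ yields the first inclusion in the display.

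The crux is $D_{q^k}(\sigma)\le H_k$. I would prove, by strong induction on $k$, the dichotomy: either $\mu_i=q-1$ for all $i\le k$ — in which case $\log|S_k|=1$, so $S_k=\psi_k^{-1}(D_{q^k}(\sigma))$ and $H_k=1$ — or else $D_{q^k}(\sigma)\le H_k$ (the lemma follows at once, as $\mu_k\ne q-1$ excludes the first option). In the inductive step, if some $\mu_j\ne q-1$ with $j<k$, pick the largest such $j$; by the inductive hypothesis $D_{q^j}(\sigma)\le H_j$, and then for $i=j+1,\dots,k$ in turn $\psi_i^{-1}(D_{q^i}(\sigma))\le\psi^{-1}\big(D_{q^{i-1}}(\sigma)\times\overset{q}{\dotsb}\times D_{q^{i-1}}(\sigma)\big)\le\psi^{-1}\big(H_{i-1}\times\overset{q}{\dotsb}\times H_{i-1}\big)\le H_i$ by condition (i), giving $D_{q^k}(\sigma)\le H_k$. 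This leaves the case $\mu_1=\dots=\mu_{k-1}=q-1\ne\mu_k$ (which subsumes the base case $k=1$), where actual work is needed: then $H_{k-1}=1$ and $S_{k-1}=\psi_{k-1}^{-1}(D_{q^{k-1}}(\sigma))$, so $H_k=[S_k,A_{k-1}]$ and, under $\psi_k$, the quotient $Q$ from \cref{lemma: construction of Sn for the ss and wrb cases} identifies with the block-constant vectors of $(\mathbb{Z}/q)^{q^k}$, that is with a copy of the regular $\mathbb{Z}/q[\langle\sigma\rangle]$-module $(\mathbb{Z}/q)^q$ on which $\sigma$ acts by the cyclic shift, its bottom submodule being the line $\mathbb{Z}/q\mathbf{1}$; moreover $H_k$ corresponds to $(\sigma-1)\psi_k(S_k)$. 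Since $\psi_k(S_k)=N_{\mu_k}\ge N_{q-2}$, it suffices that $(\sigma-1)N_{q-2}\ge\mathbb{Z}/q\mathbf{1}$, and this is arranged using the freedom in choosing the chain in \cref{lemma: construction of Sn for the ss and wrb cases}: for $y=(0,1,\dots,q-1)$ one has $(\sigma-1)y=\mathbf{1}$, the submodule $\langle y\rangle$ has order $q^2$, contains $\mathbf{1}$, and satisfies $(\sigma-1)\langle y\rangle=\mathbb{Z}/q\mathbf{1}$, so one refines the chain through $N_{q-2}:=\langle y\rangle$. (For $q$ prime the module $(\mathbb{Z}/q)^q$ is uniserial, the chain is forced, and this holds automatically; the choice matters only when $q=p^a$ with $a\ge 2$.)

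The main obstacle is precisely this last step. Non-triviality of $H_k$ and the telescoping inclusion are formal consequences of conditions (i)--(ii) of \cref{lemma: construction of Sn for the ss and wrb cases}, but membership of the \emph{specific} subgroup $D_{q^k}(\sigma)$ in the index-$q$ subgroup $H_k$ is not dictated by the subgroup indices alone: one must track level by level whether the diagonal survives into $H_k$, and, at the level where the sequence $\{\mu_i\}$ first drops below $q-1$, one must fix the composition-series refinement of the construction so that $\sigma-1$ carries the chosen term onto the diagonal line — a job done by the explicit module element $(0,1,\dots,q-1)$.
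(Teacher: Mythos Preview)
Your proof is correct and follows the same overall strategy as the paper's, but is considerably more thorough. The paper argues only for the \emph{smallest} $k$ with $\mu_k\ne q-1$ (which is all that the application to \cref{Theorem: restricted Hausdorff spectra p-adic automorphisms}\textcolor{teal}{(ii)} requires): it observes that then $H_n=1$ and $S_n=\psi_n^{-1}(D_{q^n}(\sigma))$ for $n<k$, and asserts directly from the chain $N_q<N_{q-1}<\dotsb<N_0$ in \cref{lemma: construction of Sn for the ss and wrb cases} that $H_k\ge N_{q-1}=\psi^{-1}(D_q(S_{k-1}))$, which unwinds to $D_{q^k}(\sigma)$; the telescoping then follows from properties (i)--(ii). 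Your argument adds two things the paper leaves implicit. First, your Case~1 bootstrapping (using $\psi(H_i)\ge H_{i-1}^q$ to propagate $D_{q^j}(\sigma)\le H_j$ upward) covers every $k$ with $\mu_k\ne q-1$, not just the minimal one. Second, and more substantively, you flag that the identification of $H_k=[S_k,A_{k-1}]$ with a specific term of the chain is not automatic when $q$ is a proper $p$-power (the module $(\mathbb{Z}/q)^q$ is then not uniserial), and you repair this by pinning down $N_{q-2}$ as the $\sigma$-submodule generated by $y=(0,1,\dots,q-1)$, so that $(\sigma-1)N_{q-2}=\mathbb{Z}/q\cdot\mathbf{1}$; the paper's one-line assertion (with what appears to be a typo, $N_{\mu_k-1}$ for $N_{\mu_k+1}$) glosses over this point. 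Your non-triviality argument via the closed formula $\log|S_k|=q^k-\sum_{i\le k}\mu_iq^{k-i}$ is also different from the paper's, which reads non-triviality off from $H_k\ge N_{q-1}\ne 1$ directly.
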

\begin{proof}
    Let $k$ be the smallest integer such that $\mu_k\ne q-1$. Then $S_n:=\psi^{-1}(D_q(S_{n-1}))$ and $H_n=1$ for all $0\le n<k$. For $n=k$, by the proof of \cref{lemma: construction of Sn for the ss and wrb cases} we have the chain of normal subgroups
    $$N_{q}:=1< N_{q-1}:=\psi^{-1}(D_q(S_{k-1}))<N_{q-2}<\dotsb <N_{0}:=\psi^{-1}(S_{k-1}\times\overset{q}{\dotsb}\times S_{k-1})$$
and $|N_{i}:N_{i+1}|=q$ for $i=0,\dots,q-2$. Since by assumption $\mu_k\ne q-1$~we get $S_k:=N_{\mu_k}>N_{q-1}=\psi^{-1}(D_q(S_{k-1}))$ and $H_k:=N_{\mu_k-1}\ge N_{q-1}=\psi^{-1}(D_q(S_{k-1}))$. The last statement follows from both the equality $\psi_{k-1}(D_q(S_{k-1}))=D_{q^k}(\sigma)$ for $k$ minimal such that $\mu_k\ne q-1$ and property (ii) in \cref{lemma: construction of Sn for the ss and wrb cases}.
\end{proof}

\begin{lemma}
\label{lemma: existence of branching subgroup}
    Let $G$ be a group acting on the $m$-adic tree and let us fix an element $g\in G$. Assume further that for any vertex $v$ of the tree there exists $g_v\in \mathrm{rist}_G(v)$ such that $\psi_v(g_v)=g$. Then the subgroup $K:=\langle g_v~|~v\text{ a vertex of the tree} \rangle$ is branching.
\end{lemma}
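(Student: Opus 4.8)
The plan is to show $K := \langle g_v \mid v \text{ a vertex of the tree}\rangle$ satisfies $\psi(\mathrm{St}_K(1)) \ge K \times \overset{m}{\dotsb} \times K$, which is the definition of a branching group. First I would observe that for a vertex $v$ lying below a first-level vertex $x$, say $v = xw$, the element $g_v \in \mathrm{rist}_G(v)$ in particular lies in $\mathrm{st}_G(x)$ and acts trivially off the subtree hanging from $x$; hence $g_v \in \mathrm{St}_K(1)$ provided we also know $g_v \in K$, which holds by construction. Applying $\psi$ and projecting to the $x$-coordinate, the section $\psi_x(g_v)$ equals $g_w \in \mathrm{rist}_G(w)$ (the section of a rigid vertex stabilizer element at a vertex above which it is rigid is again rigid at the shifted vertex), and in particular $\psi_x(g_v) = g_w \in K$. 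The remaining coordinates of $\psi(g_v)$ are trivial since $g_v$ is rigid at $v$, which sits strictly below $x$.

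The key step is then: for each first-level vertex $x$ and each generator $g_w$ of $K$ (with $w$ an arbitrary vertex of the tree), I want an element of $\mathrm{St}_K(1)$ whose $\psi$-image is $(1,\dots,1,g_w,1,\dots,1)$ with $g_w$ in the $x$-slot. Taking $v := xw$, the element $g_v \in \mathrm{rist}_G(v) \le \mathrm{rist}_G(x)$ is rigid at $v$, so $\psi(g_v) = (1,\dots,1, g_v|_x, 1,\dots,1)$, and $g_v|_x = g_w$ because both are built from the hypothesis $\psi_{xw}(g_{xw}) = g = \psi_w(g_w)$ together with the rigidity forcing $g_v|_x$ to be supported on the subtree below $w$ — more carefully, one should note $\psi_x(g_{xw})$ is itself a rigid vertex stabilizer element of $G$ at $w$ with section $g$ at $w$, so it is one of the admissible choices $g_w$ (or at least generates, together with the $g_w$'s, the same group $K$; it is cleanest to simply include all such elements $g_v$ and their sections in the generating set, which does not change $K$). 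Since these elements $g_v$ for $v$ ranging over all vertices below $x$ generate exactly $K$ in the $x$-coordinate and are trivial elsewhere, their products in $\mathrm{St}_K(1)$ surject under $\psi_x$ onto $K$ while being supported only in the $x$-slot, giving $1 \times \dots \times K \times \dots \times 1 \le \psi(\mathrm{St}_K(1))$ for each $x$, and multiplying across slots yields $K \times \overset{m}{\dotsb} \times K \le \psi(\mathrm{St}_K(1))$.

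The main obstacle is the bookkeeping around which elements actually belong to the generating set of $K$: the hypothesis gives one element $g_v$ per vertex $v$, but to run the argument I need the section $\psi_x(g_{xw})$ at a first-level vertex to again be an element of the generating family (a rigid vertex stabilizer element of $G$ at $w$ with the prescribed section $g$). This is not literally asserted, so the honest move is either (a) to note that $\psi_x(g_{xw}) \in \mathrm{rist}_G(w)$ has section $g$ at $w$, hence \emph{qualifies} as a legitimate choice "$g_w$", so we may assume the family is closed under taking such sections, or (b) to define $K$ as the group generated by the closure of $\{g_v\}$ under sections, observing this enlargement does not leave $G$. Either way the enlarged family is still a family of rigid vertex stabilizer elements indexed by vertices with section $g$, so no generality is lost, and the wreath-recursion inclusion $\psi(\mathrm{St}_K(1)) \ge K \times \overset{m}{\dotsb} \times K$ follows.
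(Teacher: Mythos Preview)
Your overall strategy is the same as the paper's: for each first-level vertex $x$ and each generator $g_w$ of $K$, exhibit an element of $K\cap\mathrm{rist}_G(x)$ whose $\psi$-image has $g_w$ in the $x$-slot and is trivial elsewhere, namely $g_{xw}$. Where you diverge is in the handling of what you call ``the main obstacle''. You worry that $\psi_x(g_{xw})$ might only be \emph{some} element of $\mathrm{rist}_G(w)$ with section $g$ at $w$, not necessarily the chosen generator $g_w$, and you propose workarounds (a) and (b) that either assume closure under sections or enlarge the generating set.

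These workarounds are unnecessary and, as stated, do not quite close the gap: option (a) is an unjustified ``without loss of generality'', and option (b) proves branching for a potentially different group than the $K$ in the statement. The paper's proof dissolves the obstacle with a single observation you overlooked: the map $\psi_v$ is \emph{injective} on $\mathrm{rist}_G(v)$, since an element of $\mathrm{rist}_G(v)$ is completely determined by its action on the subtree below $v$. Hence there is exactly one element of $\mathrm{rist}_G(v)$ with section $g$ at $v$, so the hypothesis pins down $g_v$ uniquely and $\psi_x(g_{xw})$ is forced to equal $g_w$ on the nose. Once you add this injectivity remark, your argument and the paper's are identical.
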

\begin{proof}
It is enough to prove that for any $x_i=1,\dotsc ,m$ and any vertex $v$ there exists an element $k\in K\cap \mathrm{rist}_G(x_i)$ such that $\psi_{x_i}(k)=g_v$. However, the element $g_{x_iv}\in K\cap \mathrm{rist}_G(x_iv)\le K\cap \mathrm{rist}_G(x_i)$ satisfies the condition $\psi_{x_i}(g_{x_iv})=g_v$ from both $\psi_{x_i}(g_{x_iv})\in\mathrm{rist}_G(v)$ and $\psi_v(\psi_{x_i}(g_{x_iv}))=\psi_{x_iv}(g_{x_iv})=g$ since the map $\psi_v$ is injective when we restrict its domain to $\mathrm{rist}_G(v)$.
\end{proof}

\begin{proof}[Proof of {\cref{Theorem: restricted Hausdorff spectra p-adic automorphisms}\textcolor{teal}{\normalfont(ii)}}]
Let $\gamma\in (0,1]$ and consider 
$$1-\gamma=\sum_{n=1}^\infty\frac{\mu_n}{q^n}$$
the $q$-base expansion of $1-\gamma$, where $0\le\mu_n\le q-1$. Since $\gamma\ne 0$ there must exist an integer $k$ such that $\mu_k\ne  q-1$. Then \cref{lemma: construction of Sn for the ss and wrb cases} yields a closed self-similar and level-transitive group $G_\mathcal{S}$ such that
\begin{align}
\label{align: direct products of C2's}
    \psi_{n-k}(S_n)\ge D_{q^k}(\sigma)\times\overset{q^{n-k}}{\dotsb}\times D_{q^{k}}(\sigma)
\end{align}
for all $n> k$ by \cref{lemma: non-triviality of Hn}.

Let us define the automorphism $h$ by $h|_{u}^1=\sigma$ for every vertex $u$ at the $k$th level of the tree and $h|_w^1=1$ for any other vertex $w$. Then for any vertex~$v$ of the tree there exists $h_v\in \mathrm{rist}_{G_\mathcal{S}}(v)$ such that $\psi_v(h_v)=h$ due to \cref{align: direct products of C2's}. Then the result follows from \cref{lemma: existence of branching subgroup}.
\end{proof}

\subsection{The regular branch Hausdorff spectrum} Bartholdi proved that closed regular branch subgroups of $p$-adic automorphisms have rational Hausdorff dimension \cite[Proposition 2.7]{BartholdiHausdorff}. What is more by \v{S}uni\'{c}'s formula in \cite[Theorem 4]{SunicHausdorff} it is immediate that  the regular branch Hausdorff spectrum of the group of $p$-adic automorphisms $\Gamma_p$ is contained in $\mathbb{Z}[1/p] \cap (0,1]$. Both arguments also work for the group of $q$-adic automorphisms $\Gamma_q$. Furthermore we have that $\mathbb{Z}[1/q]=\mathbb{Z}[1/p]$. Therefore we just need to construct a closed regular branch group of $q$-adic automorphisms with prescribed Hausdorff dimension in $\mathbb{Z}[1/p] \cap (0,1]$. 

\begin{proof}[Proof of {\cref{Theorem: restricted Hausdorff spectra p-adic automorphisms}\textcolor{teal}{\normalfont(iii)}}]
    From \cref{theorem: characterization of profinite regular branch groups}, closed regular branch groups are those self-similar groups with $s_n=0$ for all $n\ge k$ for some integer $k\ge 1$. Now, if we consider any $\gamma\in \mathbb{Z}[1/p] \cap (0,1]$ we may compute a finite $q$-base expansion of $1-\gamma$ and reasoning as in the self-similar case we obtain a regular branch group $G_\mathcal{S}$ of Hausdorff dimension $\gamma$ in $\Gamma_q$, concluding the proof.
\end{proof}

An immediate consequence of the above reasoning is that the Hausdorff dimension alone does not characterize closed regular branch groups among closed self-similar groups as any number in $\mathbb{Z}[1/p]\cap (0,1]$ admits a $q$-base expansion with infinitely many non-trivial terms. Indeed, let $\gamma\in \mathbb{Z}[1/p]\cap (0,1]$, then we may compute an infinite $q$-base expansion of $\gamma$ as
$$\gamma=\frac{k}{q^r}=\frac{k-1}{q^r}+\sum_{n= r+1}^\infty\frac{q-1}{q^{n}}.$$
Then the closed self-similar group $G_\mathcal{S}$ constructed with the above $q$-base expansion and \cref{lemma: construction of Sn for the ss and wrb cases} is not regular branch by \cref{theorem: characterization of profinite regular branch groups} but its Hausdorff dimension is $k/q^r\in \mathbb{Z}[1/p]\cap(0,1]$. 

\subsection{The self-similar branch Hausdorff spectrum}

Previously we showed that for any sequence $\{\mu_n\}_{n\ge 1}$ corresponding to a $q$-base expansion of a real number $\gamma\in [0,1]$ there exists a self-similar, level-transitive profinite group $G_\mathcal{S}$ such that $s_n=\mu_n$ for all $n\ge 1$. These $q$-base expansions are enough to produce the groups needed to prove parts (i) to (iii) in \cref{Theorem: restricted Hausdorff spectra p-adic automorphisms}. However, for part (iv) we need to work with more general sequences $\{\mu_n\}_{n\ge 1}$. In this subsection we first characterize the sequences $\{\mu_n\}_{n\ge 1}$ that can be realized as the sequence $\{s_n\}_{n\ge 1}$ of a self-similar level-transitive group $G_\mathcal{S}$. Secondly we show a natural way to construct those kind of sequences by shifting such a sequence by another sequence. This shifting of sequences will produce self-similar branch groups $G_\mathcal{S}$ of any prescribed Hausdorff dimension in $\Gamma_q$.

Let $\{\mu_n\}_{n\ge 1}$ be a sequence of non-negative integers and let $\{\lambda_n\}_{n\ge 1}$ be an increasing sequence of natural numbers. We define the \textit{$\{\lambda_n\}$-shift} of the sequence $\{\mu_n\}_{n\ge 1}$ to be the sequence $\{\widetilde{\mu}_n\}$ given by 
$$\widetilde{\mu}_n:=\begin{cases}
    q^{\lambda_k}\mu_{k}&\text{ if }n=k+\lambda_k \text{ for some }k\ge 1,\\
    0&\text{ otherwise}.
\end{cases}$$
Note that the above sequence is well defined by the monotonicity of $\{\lambda_n\}_{n\ge 1}$. We call a sequence $\{\mu_n\}_{n\ge 1}$ of non-negative integers satisfying the condition
\begin{align*}
    \sum_{i=1}^{n}q^{n-i}\mu_{i}\le q^{n}-1
\end{align*}
for every $n\ge 1$ an \textit{almost $q$-expansion}. This notion generalizes the usual $q$-base expansions where $0\le \mu_n\le q-1$ for every $n\ge 1$. Finally, we say a sequence $\{\mu_n\}_{n\ge 1}$ is \textit{$G_\mathcal{S}$-realizable} if the sequence $\{\mu_n\}_{n\ge 1}$ coincides with the sequence $\{s_n(G_\mathcal{S})\}_{n\ge 1}$ for a self-similar, level-transitive group $G_\mathcal{S}$ and \textit{branch $G_\mathcal{S}$-realizable} if furthermore the group $G_\mathcal{S}$ is also branch.

\begin{lemma}
    \label{lemma: Sn in terms of sn}
    Let $S_0=\langle \sigma\rangle$ and $\psi(S_n)\le S_{n-1}\times \overset{q}{\ldots}\times S_{n-1}$ for every $n\ge 1$. Then for every $n\ge 0$ we have
    $$\log|S_n|=q^n-\sum_{i=1}^{n} q^{n-i}s_{i}.$$
\end{lemma}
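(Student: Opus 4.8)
The plan is to prove the formula $\log|S_n| = q^n - \sum_{i=1}^n q^{n-i}s_i$ by induction on $n \ge 0$, using the recursive definition of $s_n$ together with the description of $\mathrm{St}_{G_\mathcal{S}}(n)/\mathrm{St}_{G_\mathcal{S}}(n+1) \cong S_n$ furnished by the construction of $G_\mathcal{S}$ in \cref{section: generalized iterated wreath products}. First I would record the base case: since $S_0 = \langle \sigma \rangle$ we have $\log|S_0| = 1 = q^0$, and the empty sum on the right-hand side is $0$, so the formula holds for $n = 0$. (It is worth noting that the hypothesis $\psi(S_n) \le S_{n-1}\times\overset{q}{\ldots}\times S_{n-1}$ for all $n\ge 1$ means $G_\mathcal{S}$ is self-similar by \cref{proposition: self-similar is of the form G_L^S}, so $s_n$ and $S_n$ are related exactly as in \cref{proposition: sn for GS}.)

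For the inductive step, I would use \cref{proposition: sn for GS}, which for a self-similar $G_\mathcal{S}$ gives
$$s_n = \log|S_{n-1}\times\overset{q}{\dotsb}\times S_{n-1}:\psi(S_n)| = q\log|S_{n-1}| - \log|S_n|,$$
where the second equality uses that $\psi$ is injective on $\mathrm{St}_{G_\mathcal{S}}(1)$, so $|S_n| = |\psi(S_n)|$, and that $|S_{n-1}\times\overset{q}{\dotsb}\times S_{n-1}| = |S_{n-1}|^q$. Rearranging, $\log|S_n| = q\log|S_{n-1}| - s_n$. Substituting the inductive hypothesis $\log|S_{n-1}| = q^{n-1} - \sum_{i=1}^{n-1} q^{n-1-i}s_i$ yields
$$\log|S_n| = q\Big(q^{n-1} - \sum_{i=1}^{n-1} q^{n-1-i}s_i\Big) - s_n = q^n - \sum_{i=1}^{n-1} q^{n-i}s_i - s_n = q^n - \sum_{i=1}^{n} q^{n-i}s_i,$$
which is the claimed identity.

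There is no real obstacle here: the lemma is a routine unwinding of the recursion $\log|S_n| = q\log|S_{n-1}| - s_n$, which is itself immediate from \cref{proposition: sn for GS} once one observes that the hypotheses of that proposition (self-similarity of $G_\mathcal{S}$) are exactly what is assumed. If anything, the only point requiring a moment's care is making sure the indexing of the sum matches at the boundary — i.e., that peeling off the $i=n$ term $q^{n-n}s_n = s_n$ from the sum $\sum_{i=1}^n q^{n-i}s_i$ leaves precisely $\sum_{i=1}^{n-1}q^{n-i}s_i$ — which is transparent. This formula will be the tool used in the sequel to translate a prescribed almost $q$-expansion $\{\mu_n\}_{n\ge 1}$ into conditions on the orders $|S_n|$, and hence to verify that the constructed $S_n$ are legitimate subgroups (in particular non-trivial, equivalently $\log|S_n|\ge 0$, which is exactly the almost $q$-expansion inequality).
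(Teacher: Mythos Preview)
Your proof is correct and follows essentially the same approach as the paper: verify the base case $n=0$ from $S_0=\langle\sigma\rangle$, then induct using the recursion $\log|S_n|=q\log|S_{n-1}|-s_n$ obtained from \cref{proposition: sn for GS}. The paper's version is terser but the argument is identical.
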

\begin{proof}
    The result is clear for $n=0$ by assumption. The general case follows by induction on $n$ from the equality
    $$\log|S_n|=q\log|S_{n-1}|-s_n$$
    in \cref{proposition: sn for GS}.
\end{proof}

\begin{proposition}
    \label{proposition: almost q-expansions}
    Let $\{\mu_n\}_{n\ge 1}$ be a sequence of natural numbers. We have the following:
    \begin{enumerate}[\normalfont(i)]
        \item the sequence $\{\mu_n\}_{n\ge 1}$ is $G_\mathcal{S}$-realizable if and only if it is an almost $q$-expansion;
        \item for any increasing sequence of natural numbers $\{\lambda_n\}_{n\ge 1}$, the $\{\lambda_n\}$-shift of an almost $q$-expansion $\{\mu_n\}_{n\ge 1}$ is branch $G_\mathcal{S}$-realizable.
    \end{enumerate}
\end{proposition}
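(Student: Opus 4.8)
The plan is to prove (i) directly in both directions and to deduce (ii) by combining the observation that the $\{\lambda_n\}$-shift of an almost $q$-expansion is again an almost $q$-expansion with a tailored version of the construction used for (i). For the forward direction of (i), suppose $\{\mu_n\}_{n\ge 1}$ is realized by a self-similar level-transitive $G_\mathcal{S}\le\Gamma_q$. Then $S_0=\langle\sigma\rangle$, and by \cref{corollary: S transitive G transitive} for every $n$ there is a level-$n$ vertex $v$ with $\psi_v(S_n)$ transitive on $\{1,\dotsc,q\}$; since $\psi_v(S_n)\le\langle\sigma\rangle$ this forces $\psi_v(S_n)=\langle\sigma\rangle$ and hence $\log|S_n|\ge 1$. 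Feeding this into the identity $\log|S_n|=q^n-\sum_{i=1}^n q^{n-i}s_i$ of \cref{lemma: Sn in terms of sn} (which applies since $G_\mathcal{S}$ is self-similar, cf.\ \cref{proposition: self-similar is of the form G_L^S}) yields $\sum_{i=1}^n q^{n-i}\mu_i\le q^n-1$ for all $n$, i.e.\ $\{\mu_n\}$ is an almost $q$-expansion.

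For the converse I would build $\mathcal{S}$ by induction, maintaining $\log|S_n|=q^n-\sum_{i=1}^n q^{n-i}\mu_i=:\ell_n$ (so $\ell_n\ge 1$ by the almost $q$-expansion hypothesis) together with the inclusion $c_n\in S_n$, where $c_n$ denotes the automorphism with $c_n|_w^1=\sigma$ at every level-$n$ vertex $w$ and trivial labels elsewhere. Starting from $S_0=\langle\sigma\rangle$, at step $n$ the almost $q$-expansion bound unpacks to $0\le\mu_n\le q\ell_{n-1}-1$. Put $N:=\psi^{-1}\big(S_{n-1}\times\overset{q}{\dotsb}\times S_{n-1}\big)$, so that $\langle c_n\rangle\le N$ (because $c_{n-1}\in S_{n-1}$) and $P:=N\rtimes A_{n-1}$ is a finite $p$-group in which both $N$ and $\langle c_n\rangle$ are normal. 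Refining a chief series of $P$ through $\langle c_n\rangle\le N$ produces a chain of $A_{n-1}$-invariant subgroups from $N$ down to $\langle c_n\rangle$ all of whose consecutive indices equal $p$; passing to every $k$th term, where $q=p^k$, turns it into a chain with consecutive indices $q$ and exactly $q\ell_{n-1}-1$ steps, along which every index $q^0,q^1,\dotsc,q^{q\ell_{n-1}-1}$ in $N$ occurs. I then take $S_n$ to be the term of index $q^{\mu_n}$ in $N$; by construction $S_n$ is $A_{n-1}$-invariant, $\psi(S_n)\le S_{n-1}\times\overset{q}{\dotsb}\times S_{n-1}$, $c_n\in S_n$ and $\log|S_n|=\ell_n$. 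By \cref{proposition: self-similar is of the form G_L^S} the resulting $G_\mathcal{S}$ is self-similar, it is level-transitive since $c_n\in S_n$ (\cref{corollary: S transitive G transitive}), and $s_n=\mu_n$ by \cref{proposition: sn for GS}.

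For (ii), I would first verify by a short manipulation — using that $\{\lambda_n\}$ increasing forces $k+\lambda_k\le n\Rightarrow k\le n$, together with $\mu_k\ge 0$ — that $\sum_{i\le n}q^{n-i}\widetilde\mu_i=\sum_{k:\,k+\lambda_k\le n}q^{n-k}\mu_k\le\sum_{k\le n}q^{n-k}\mu_k\le q^n-1$, so the $\{\lambda_n\}$-shift $\{\widetilde\mu_n\}$ is again an almost $q$-expansion and part (i) makes it $G_\mathcal{S}$-realizable. To force the realizing group to be branch I would not take an arbitrary such $G_\mathcal{S}$: instead, run the construction of (i) for the original sequence $\{\mu_j\}$ to obtain subgroups $V_0=\langle\sigma\rangle,V_1,\dotsc$ with $V_j$ of index $q^{\mu_j}$ in $\psi^{-1}\big(V_{j-1}\times\overset{q}{\dotsb}\times V_{j-1}\big)$, and then, writing $n_j:=j+\lambda_j$, define
\[
S_{n_j}:=\psi_{\lambda_j}^{-1}\big(V_j\times\overset{q^{\lambda_j}}{\dotsb}\times V_j\big),\qquad S_n:=\psi^{-1}\big(S_{n-1}\times\overset{q}{\dotsb}\times S_{n-1}\big)\ \text{ for } n\notin\{n_j:j\ge1\}.
\]
A telescoping evaluation of $\log|S_n|$ compared with \cref{lemma: Sn in terms of sn} shows this $\mathcal{S}$ has $s_n=\widetilde\mu_n$ for every $n$, the inclusions $c_n\in S_n$ persist (so self-similarity and level-transitivity carry over exactly as in (i)), and for branchness: given $l\ge1$, choose $j_0$ with $\lambda_{j_0}\ge l$; then for every $n\ge n_{j_0}$ the group $S_n$ is a direct product of copies of a fixed subgroup indexed by the vertices of some level $\ge l$ — immediately at the levels $n_j$ by construction, and this property propagates through the intervening full-direct-product levels — so $\mathrm{Rist}_{S_n}(l)=S_n$, and \cref{proposition: Sn direct products GLS branch} gives that $G_\mathcal{S}$ is branch.

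The step I expect to be the crux is the inductive construction in (i): producing an $A_{n-1}$-invariant subgroup of \emph{exactly} index $q^{\mu_n}$ which also contains $c_n$. What makes this painless is that everything happens inside the finite $p$-group $P$, so its chief factors have order $p$ and one can descend from $N$ to $\langle c_n\rangle$ through $A_{n-1}$-invariant subgroups in steps of index $p$, hence of index $q$ after grouping; the almost $q$-expansion inequality is precisely the statement that $q^{\mu_n}$ lies in the attained range of indices $\{q^0,\dotsc,q^{q\ell_{n-1}-1}\}$. In (ii) the analogous delicate point is that once branchness is demanded the $S_n$ are no longer free, and the scaling factor $q^{\lambda_k}$ in the definition of the $\{\lambda_n\}$-shift is exactly what converts the modification forced at level $n_j$ into a full direct product of depth $\lambda_j$, keeping rigid level stabilizers full up to a depth tending to infinity.
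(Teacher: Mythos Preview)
Your proposal is correct and follows essentially the same route as the paper: the forward direction of (i) via $\log|S_n|\ge 1$ and \cref{lemma: Sn in terms of sn}, the converse via a normal chain in the finite $p$-group $N\rtimes A_{n-1}$ descending from $N=\psi^{-1}(S_{n-1}\times\overset{q}{\dotsb}\times S_{n-1})$ to $\langle c_n\rangle=\psi_n^{-1}(D_{q^n}(\sigma))$, and (ii) via the identical construction $S_{n_j}=\psi_{\lambda_j}^{-1}(V_j\times\overset{q^{\lambda_j}}{\dotsb}\times V_j)$ with full direct products in between. The one point to sharpen is that in (ii) self-similarity at the special levels $n=n_j$ does not follow from $c_n\in S_n$ but from $\psi(V_j)\le V_{j-1}\times\overset{q}{\dotsb}\times V_{j-1}$ together with the telescoped identity $S_{n_j-1}=\psi_{\lambda_j}^{-1}(V_{j-1}\times\overset{q^{\lambda_j}}{\dotsb}\times V_{j-1})$, which is precisely the check the paper spells out.
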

\begin{proof}
    Let us first prove part (i). Note that the group $G_\mathcal{S}$ is self-similar if and only if $\psi(S_n)\le S_{n-1}\times\overset{q}{\ldots}\times S_{n-1}$ by \cref{proposition: self-similar is of the form G_L^S} and it is level-transitive if $S_n\ge D_{q^n}(\sigma)$. Let us assume first $\{\mu_n\}_{n\ge 1}$ is an almost $q$-expansion. By the properties of finite $p$-groups and arguing as in the proof of \cref{lemma: construction of Sn for the ss and wrb cases} we only need to prove that $\mathcal{S}$ can be chosen such that
    $$\mu_n\le \log|S_{n-1}\times\overset{q}{\ldots}\times S_{n-1}:\psi(D_{q^n}(\sigma))|=q\log|S_{n-1}|-1$$
    for every $n\ge 1$.  We fix $S_0:=\langle \sigma\rangle$ and then we observe that
    $$\mu_1\le \log|S_{0}\times\overset{q}{\ldots}\times S_{0}:\psi(D_{q}(\sigma))|=q-1.$$
    Let us assume by induction on $n$ that $s_i=\mu_i$ for every $1\le i\le n-1$. Then by \cref{lemma: Sn in terms of sn} we need to prove that
    $$\mu_n\le q\big(q^{n-1}-\sum_{i=1}^{n-1} q^{n-1-i}s_{i}\big)-1,$$
    or equivalently, that
    $$\sum_{i=1}^{n}q^{n-i}\mu_i\le q^n-1,$$
    which is precisely the condition of $\{\mu_n\}_{n\ge 1}$ being an almost $q$-expansion. The above also proves that $\{s_n\}_{n\ge 1}$ is an almost $q$-expansion if $G_\mathcal{S}$ is self-similar and level-transitive by \cref{lemma: Sn in terms of sn}, as level-transitivity implies that $\log|S_n|\ge 1$, concluding the proof of part (i).

    For part (ii) it is enough to prove that $\widetilde{S}_n$ may be chosen in this case such that $\mathrm{Rist}_{\widetilde{S}_n}(\lambda_k)=\widetilde{S}_n$ for all $n\ge k+\lambda_k$ by \cref{proposition: Sn direct products GLS branch} and $s_n(G_{\widetilde{\mathcal{S}}})=\widetilde{\mu}_n$ for all $n\ge 1$, where $\{\widetilde{\mu}_n\}_{n\ge 1}$ is the $\{\lambda_n\}$-shift of the sequence $\{\mu_n\}_{n\ge 1}$. Since $\{\mu_k\}_{k\ge 1}$ is $G_\mathcal{S}$-realizable, there exists a defining sequence $\mathcal{S}$ such that 
    $$\mu_k=q\log|S_{k-1}|-\log|S_k|$$
    for all $k\ge 1$ and satisfying the conditions of \cref{proposition: self-similar is of the form G_L^S} and \cref{corollary: S transitive G transitive}. Now we define $\widetilde{S}_0:=S_0=\langle \sigma\rangle$ and 
    \begin{align*}
        \widetilde{S}_n:=\begin{cases}
            \psi_{\lambda_k}^{-1}(S_k\times\overset{q^{\lambda_k}}{\dotsb}\times S_k)& \text{ if }n=k+\lambda_k \text{ for some }k\ge 1,\\
            \psi^{-1}(\widetilde{S}_{n-1}\times\overset{q}{\dotsb}\times \widetilde{S}_{n-1})&\text{ otherwise}.
        \end{cases}
    \end{align*}
    The condition in  \cref{corollary: S transitive G transitive} is readily satisfied by $\widetilde{\mathcal{S}}$ and $\mathrm{Rist}_{\widetilde{S}_n}(\lambda_k)=\widetilde{S}_n$ for all $n\ge k+\lambda_k$ by construction as $\{\lambda_k\}_{k\ge 1}$ is increasing. Therefore the group $G_{\widetilde{\mathcal{S}}}$ is branch. We just need to check the condition in \cref{proposition: self-similar is of the form G_L^S} to prove $G_{\widetilde{\mathcal{S}}}$ is also self-similar. For $n\ne k+\lambda_k$ the condition is trivially satisfied by construction. For $n=k+\lambda_k$ it follows from
    \begin{align}
        \label{align: self-similar branch index}
        \psi_{\lambda_k+1}(\widetilde{S}_n)&=\psi(S_k)\times\overset{q^{\lambda_k}}{\dotsb}\times \psi(S_k)\\
        &\le (S_{k-1}\times \overset{q}{\dotsb}\times S_{k-1})\times \overset{q^{\lambda_k}}{\dotsb}\times (S_{k-1}\times \overset{q}{\dotsb}\times S_{k-1})\nonumber \\
        &=\psi_{\lambda_k}(\widetilde{S}_{n-1})\times\overset{q}{\dotsb}\times \psi_{\lambda_k}(\widetilde{S}_{n-1}),\nonumber 
    \end{align}
    as the sequence $\mathcal{S}$ satisfies the condition in \cref{proposition: self-similar is of the form G_L^S} by assumption. From \textcolor{teal}{(}\ref{align: self-similar branch index}\textcolor{teal}{)} it is also clear that $s_n(G_{\widetilde{\mathcal{S}}})=\widetilde{\mu}_n$ for all $n\ge 1$. 
\end{proof}

\begin{proof}[Proof of {\cref{Theorem: restricted Hausdorff spectra p-adic automorphisms}\textcolor{teal}{\normalfont(iv)}}]
Let us fix $\gamma\in [0,1]$ and consider the $q$-base expansion of $1-\gamma$
$$1-\gamma=\sum_{n\ge 1}\frac{\mu_n}{q^n}.$$
Then $\{\mu_n\}_{n\ge 1}$ is $G_\mathcal{S}$-realizable by \cref{proposition: almost q-expansions}\textcolor{teal}{(i)} and for any increasing sequence $\{\lambda_n\}_{n\ge 1}$ the $\{\lambda_n\}$-shift sequence $\{\widetilde{\mu}_n\}_{n\ge 1}$ is branch $G_\mathcal{S}$-realizable, yielding a self-similar branch pro-$p$ group $G_\mathcal{S}$ such that
\begin{align*}
    \mathrm{hdim}_{\Gamma_q}(G_\mathcal{S})&=1-S_{G_\mathcal{S}}(1/q)=1-\sum_{n\ge 1}\frac{\widetilde{\mu}_n}{q^n}=1-\sum_{k\ge 1}\frac{q^{\lambda_k}\mu_k}{q^{k+\lambda_k}}\\
    &=1-(1-\gamma)=\gamma.\qedhere
\end{align*}
\end{proof}

The same construction in \cref{proposition: almost q-expansions} may be used for any $m\ge 2$, not necessarily a prime power, to obtain self-similar closed groups acting on the $m$-adic tree which are branch but not regular branch.

\begin{corollary}
\label{Theorem: non-regular self-similar branch groups}
    For any $m\ge 2$ there exist self-similar branch groups which are not regular branch.
\end{corollary}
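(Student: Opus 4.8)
The idea is to recycle the $\{\lambda_n\}$-shift construction of \cref{proposition: almost q-expansions}: over the $m$-adic tree it produces self-similar, level-transitive, branch groups, and if the shifted sequence has infinitely many non-zero terms then the resulting group cannot be regular branch, by the equivalence (i)$\Leftrightarrow$(iii) of \cref{theorem: characterization of profinite regular branch groups}.

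The first task is to verify that \cref{proposition: almost q-expansions} remains valid over the $m$-adic tree for an arbitrary integer $m\ge2$ --- working inside $W_{\langle\sigma\rangle}$ with $\sigma=(1\,2\,\dotsb\,m)$, and with ``$q$-base expansion'' everywhere replaced by ``$m$-base expansion''. Part (ii) of that proposition transfers word for word, since its proof appeals only to \cref{proposition: self-similar is of the form G_L^S}, \cref{corollary: S transitive G transitive} and \cref{proposition: Sn direct products GLS branch}, none of which uses $m$ being a prime power (and neither does \cref{lemma: commutators in Cq wr Cq}, whose proof is purely abelian-by-cyclic). The prime-power hypothesis enters only in part (i), where finite $p$-group theory is used to split a normal chain into steps of index $q$. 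For the corollary this is more than we need: it suffices to realize \emph{some} $G_{\mathcal S}$-realizable sequence $\{\mu_n\}_{n\ge1}$ with $\mu_n\neq0$ for every $n$, and the constant sequence $\mu_n=1$ is an almost $m$-expansion that will do.

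To realize such a sequence I would rerun the inductive construction in the proof of \cref{lemma: construction of Sn for the ss and wrb cases} with all local actions confined to $\langle\sigma\rangle$, so that every $S_n$ is abelian of exponent dividing $m$; as there, I would carry along an $A_{n-1}$-invariant subgroup $H_n\le S_n$ of index $m$ with $[S_n,A_{n-1}]\le H_n$, which forces $A_{n-2}$ to act trivially on $S_{n-1}/H_{n-1}$ and hence makes the action of $A_{n-1}$ on the relevant quotient factor through the block-permutation action of $A_0=\langle\sigma\rangle$ --- exactly the identity ``$[Q,A_{n-1}]=[Q,A_0]$'' used in that proof. The one change is that the ``index-$q$ refinement step'' is replaced by a ``coordinate-sum step'': in the quotient one takes the kernel of the sum map $(S_{n-1}/H_{n-1})\times\overset{m}{\dotsb}\times(S_{n-1}/H_{n-1})\to S_{n-1}/H_{n-1}$, which is $\langle\sigma\rangle$-invariant, has index $|S_{n-1}/H_{n-1}|=m$, and contains the diagonal (the exponent of $S_{n-1}/H_{n-1}$ divides its order $m$); pulling it back produces an admissible $S_n$ with $D_m(S_{n-1})\le\psi(S_n)<S_{n-1}\times\overset{m}{\dotsb}\times S_{n-1}$ of index $m$, so $s_n=1$ by \cref{proposition: sn for GS}, and a new $H_n$ of index $m$ in $S_n$ (by the abelian-by-cyclic argument of \cref{lemma: commutators in Cq wr Cq}) closes the induction. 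Checking that the bookkeeping of \cref{lemma: construction of Sn for the ss and wrb cases} genuinely survives this substitution is the step I expect to demand the most care; everything else is routine.

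Finally, with the constant sequence $\mu_n=1$ realized and any strictly increasing sequence $\{\lambda_n\}_{n\ge1}$ of natural numbers fixed, the general-$m$ form of \cref{proposition: almost q-expansions}(ii) yields a closed, self-similar, level-transitive, \emph{branch} group $G_{\widetilde{\mathcal S}}\le W_{\langle\sigma\rangle}$ with $s_n(G_{\widetilde{\mathcal S}})=\widetilde\mu_n$, where $\widetilde\mu_{k+\lambda_k}=m^{\lambda_k}\neq0$ for every $k\ge1$. Thus $\{s_n(G_{\widetilde{\mathcal S}})\}_{n\ge1}$ has infinitely many non-zero terms, so $G_{\widetilde{\mathcal S}}$ is not regular branch by \cref{theorem: characterization of profinite regular branch groups}, and it is self-similar and branch --- giving, for every $m\ge2$, the desired example. (For $m=2$, which is the prime power $q=2$, one may instead simply quote \cref{Theorem: restricted Hausdorff spectra p-adic automorphisms}(iv) with $\gamma$ chosen to have an infinite base-$2$ expansion.)
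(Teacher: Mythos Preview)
Your overall strategy is exactly the paper's: realise a $G_{\mathcal S}$-realisable sequence with all terms non-zero over the $m$-adic tree, apply the $\{\lambda_n\}$-shift of \cref{proposition: almost q-expansions}(ii) to make it branch, and conclude ``not regular branch'' from \cref{theorem: characterization of profinite regular branch groups} because the shifted sequence has infinitely many non-zero $s_n$. The difference lies only in the realisation step, and here the paper's choice is considerably simpler than yours.

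You realise $\mu_n=1$ via a coordinate-sum kernel, carrying along auxiliary subgroups $H_n$ so that the $A_{n-1}$-action on the relevant quotient factors through $\langle\sigma\rangle$; this is correct (your exponent-divides-$m$ observation ensures the diagonal lies in the sum kernel, and \cref{lemma: commutators in Cq wr Cq} supplies the index-$m$ commutator step for arbitrary $m$), but it does demand the bookkeeping you flag. The paper instead takes the all-diagonal sequence $S_0=\langle\sigma\rangle$ and $S_n=\psi^{-1}(D_m(S_{n-1}))$ for every $n\ge1$, realising $\mu_n=m-1$. The reason this works with no bookkeeping at all is that in this particular construction every element of $A_{n-1}$ already satisfies $\psi(g)=(h,\dotsc,h)\tau$ with $h\in A_{n-2}$ and $\tau\in\langle\sigma\rangle$, so conjugation sends a diagonal tuple $(s,\dotsc,s)$ to $(s^h,\dotsc,s^h)$, still diagonal; $A_{n-1}$-invariance is thus automatic, and the conditions of \cref{proposition: self-similar is of the form G_L^S} and \cref{corollary: S transitive G transitive} are immediate. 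What your approach buys is a smaller $\mu_n$, but for the corollary any non-zero value suffices, so the paper's shortcut is preferable.
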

\begin{proof}
    For $m$ not a prime power just choose the sequence $\mathcal{S}$ such that $S_0:=\langle \sigma\rangle$ and $S_n:=D_q(S_{n-1})$ for every $n\ge 1$, as diagonal subgroups are always $A_{n-1}$-invariant and satisfy the conditions in \cref{proposition: self-similar is of the form G_L^S} and \cref{corollary: S transitive G transitive}. Thus $G_\mathcal{S}$ is self-similar and level-transitive. Therefore the same construction as in the proof of \cref{proposition: almost q-expansions}\textcolor{teal}{(ii)} yields a self-similar closed branch group $G_{\widetilde{\mathcal{S}}}$ which is not regular branch by \cref{theorem: characterization of profinite regular branch groups} as $s_n(G_{\widetilde{\mathcal{S}}})\ne 0$ for infinitely many $n\ge 1$ by construction.
\end{proof}

We remark that the above self-similar closed branch groups $G_{\widetilde{\mathcal{S}}}$ cannot be `virtually' fractal as otherwise they would be regular branch by \cref{theorem: for fractal profinite branch is equivalent to regular branch}.

An immediate consequence of \cref{Theorem: restricted Hausdorff spectra p-adic automorphisms}\textcolor{teal}{(iv)} is that there are uncountably many distinct closed self-similar branch groups non-conjugate in $\mathfrak{A}$, as the Hausdorff dimension is a conjugation invariant.  On the other hand, there are only countably many closed regular branch groups as each closed regular branch group is given by a finite pattern in the $m$-adic tree \cite[Theorem 3]{SunicHausdorff} and there are only countably many possible finite patterns. This shows the classes of closed self-similar branch groups and closed regular branch groups differ from each other more than anticipated.

\section{Zero-dimensional just infinite branch pro-$p$ groups}
\label{section: branch groups}

In the previous section we provided the first examples of zero-dimensional closed branch groups using the family of groups $G_\mathcal{S}$. These groups are never finitely generated when they are subgroups of $q$-adic automorphisms, as shown below in \cref{proposition: not finitely generated}. Therefore they do not provide an  answer to \cref{Conjecture: New horizons}, cf. \cref{proposition: finite generation of just infinite branch pro p groups}. However, they suggest a way of constructing a counterexample to \cref{Conjecture: New horizons}: we can fit together the generators of each $S_n$ in a single directed automorphism. Using this idea we construct finitely generated discrete subgroups of $q$-adic automorphisms which are just infinite and branch and whose closure is zero-dimensional.

\subsection{The groups $G_\mathcal{S}$ are not just infinite}
We first show that when $G_\mathcal{S}$ is an infinite pro-$p$ group it is never finitely generated.

\begin{proposition}
\label{proposition: not finitely generated}
    If all the subgroups $S_n\in \mathcal{S}$ are $p$-groups then the group $G_\mathcal{S}$ is either finite or not topologically finitely generated.
\end{proposition}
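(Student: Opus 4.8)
The plan is to bound the number of topological generators $d(G_\mathcal{S})=\dim_{\mathbb{F}_p}\big(G_\mathcal{S}/\Phi(G_\mathcal{S})\big)$ from below (here $\Phi$ denotes the Frattini subgroup) through the tower $G_\mathcal{S}=\varprojlim_n A_n$. Since by hypothesis every $S_n$ is a finite $p$-group, each $A_n=S_n\rtimes\dotsb\rtimes S_0$ is a finite $p$-group and $G_\mathcal{S}$ is pro-$p$; as the connecting maps $A_n\to A_{n-1}$ are surjective, the sequence $d(A_n)$ is non-decreasing and $d(G_\mathcal{S})=\sup_n d(A_n)=\lim_n d(A_n)$ (every continuous homomorphism $G_\mathcal{S}\to\mathbb{F}_p$ factors through some $A_n$). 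Recalling from the previous section that $G_\mathcal{S}=\prod_{n\ge 0}S_n$ and $\mathrm{St}_{G_\mathcal{S}}(N)=\prod_{k\ge N}S_k$, the group $G_\mathcal{S}$ is finite precisely when $S_n=1$ for all large $n$. Hence it suffices to prove that $d(A_n)\ge d(A_{n-1})+1$ whenever $S_n\ne 1$: if $G_\mathcal{S}$ is infinite then $S_n\ne 1$ for infinitely many $n$, so $d(A_n)\to\infty$ and $G_\mathcal{S}$ is not topologically finitely generated.

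To establish the key inequality I would exploit the semidirect decomposition $A_n=S_n\rtimes A_{n-1}$. Put $\overline{S_n}:=S_n/\big(\Phi(S_n)[S_n,A_{n-1}]\big)$, the module of $A_{n-1}$-coinvariants of the Frattini quotient $S_n/\Phi(S_n)$. First one checks that $\Phi(S_n)[S_n,A_{n-1}]=S_n^p[S_n,A_n]$ is a normal subgroup of $A_n$ (a short commutator computation from the normality of $S_n$ in $A_n$), so that $B_n:=A_n/\big(\Phi(S_n)[S_n,A_{n-1}]\big)$ is a group with $B_n/\overline{S_n}\cong A_{n-1}$ in which $\overline{S_n}$ is elementary abelian and central. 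The point is then that the complement $A_{n-1}\le A_n$ maps injectively into $B_n$ to a complement of $\overline{S_n}$ in $B_n$; since $\overline{S_n}$ is central this forces $B_n\cong \overline{S_n}\times A_{n-1}$, whence $d(A_n)\ge d(B_n)=\dim_{\mathbb{F}_p}\overline{S_n}+d(A_{n-1})$.

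Finally, $\overline{S_n}\ne 0$ whenever $S_n\ne 1$, and this is exactly where the $p$-group hypothesis is used: $S_n/\Phi(S_n)$ is a non-zero module over $\mathbb{F}_p[A_{n-1}]$, and the latter is a local ring — being the group algebra of a finite $p$-group over $\mathbb{F}_p$ — with Jacobson radical the augmentation ideal, so Nakayama's lemma gives $\overline{S_n}=(S_n/\Phi(S_n))_{A_{n-1}}\ne 0$. (Alternatively, apply Pontryagin duality together with the standard fact that a $p$-group acting on a non-trivial finite $p$-group has non-trivial fixed points.) Combining the last two paragraphs yields $d(A_n)\ge d(A_{n-1})+1$ when $S_n\ne 1$, completing the argument.

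The only genuinely delicate point is the middle step: checking that $\Phi(S_n)[S_n,A_{n-1}]$ is normal in $A_n$, that $\overline{S_n}$ lies in the centre of $B_n$, and that the semidirect splitting of $A_n$ over $S_n$ descends to exhibit $B_n$ as the direct product $\overline{S_n}\times A_{n-1}$. Everything else is routine bookkeeping with the explicit description $G_\mathcal{S}=\prod_{n\ge 0}S_n$ and the congruence quotients $A_{n-1}\cong G_\mathcal{S}/\mathrm{St}_{G_\mathcal{S}}(n)$ already established.
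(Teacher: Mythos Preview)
Your argument is correct and follows essentially the same route as the paper: both establish $d(A_n)\ge d(A_{n-1})+1$ whenever $S_n\ne 1$ by using the splitting $A_n=S_n\rtimes A_{n-1}$ to exhibit a quotient of $A_n$ that decomposes as a direct product with factor $A_{n-1}$ (or its abelianization). The paper works with the abelianization $A_n/A_n'=S_n/[S_n,A_n]\times A_{n-1}/A_{n-1}'$ and uses nilpotency of $A_n$ to get $[S_n,A_n]<S_n$, while you pass to the slightly larger quotient $B_n=A_n/(S_n^p[S_n,A_n])\cong\overline{S_n}\times A_{n-1}$ and invoke Nakayama; these are cosmetic variants of the same idea.
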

\begin{proof}
    If only finitely many $S_n$ are non-trivial then $G_\mathcal{S}$ is finite. Thus let us assume there is an infinite subsequence $\{S_{n_i}\}_{i\ge 1}$ where each $S_{n_i}$ is non-trivial. It is enough to see that $d(A_{n_i})>d(A_{n_i-1})$ for all $i\ge 1$, where $d(H)$ denotes the minimal number of elements needed to generate $H$. Since $A_{n_i}$ is a finite $p$-group we know that $d(A_{n_i})=d(A_{n_i}/A_{n_i}')$. Furthermore, since $S_{n_i}$ is normal in $A_{n_i}$ we also have $A_{n_i}'=[S_{n_i},A_{n_i}]\rtimes A_{{n_i}-1}'$ and 
    $$\frac{A_{n_i}}{A_{n_i}'}=\frac{S_{n_i}\rtimes A_{n_i-1}}{[S_{n_i},A_{n_i}]\rtimes A_{n_i-1}'}\cong\frac{S_{n_i}}{[S_{n_i},A_{n_i}]}\times\frac{A_{n_i-1}}{A_{n_i-1}'}.$$
    Certainly $[S_{n_i},A_{n_i}]<S_{n_i}$ as $A_{n_i}$ is a non-trivial finite $p$-group and therefore
\begin{align*}
    d(A_{n_i})&=d(A_{n_i}/A_{n_i}')=d(A_{n_i-1}/A_{n_i-1}')+d(S_{n_i}/[S_{n_i},A_{n_i}])\ge d(A_{n_i-1})+1.\qedhere
\end{align*}
\end{proof}

Now we show the pro-$p$ groups $G_\mathcal{S}$ are not just infinite as they are not topologically finitely generated.

\begin{proposition}
\label{proposition: finite generation of just infinite branch pro p groups}
Let $G$ be a just infinite pro-$p$ group. Then $G$ is topologically finitely generated.
\end{proposition}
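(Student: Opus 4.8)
The plan is to use the standard fact that a pro-$p$ group $G$ is topologically finitely generated if and only if its Frattini quotient $G/\Phi(G)$ is finite, where $\Phi(G)=\overline{G^{p}[G,G]}$; indeed, by the pro-$p$ analogue of Burnside's basis theorem the minimal number of topological generators equals $\dim_{\mathbb{F}_p}G/\Phi(G)$. The subgroup $\Phi(G)$ is closed and characteristic, hence closed and normal, so the just infiniteness of $G$ forces $\Phi(G)$ to be either trivial or open. If $\Phi(G)$ is open it has finite index, $G/\Phi(G)$ is finite, and $G$ is topologically finitely generated. Thus the entire task reduces to ruling out the possibility $\Phi(G)=1$.

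To exclude $\Phi(G)=1$, I would first recall that every maximal open subgroup $M$ of a pro-$p$ group is normal of index $p$ (its core has finite $p$-power index in $G$, and maximal subgroups of a finite $p$-group are normal of index $p$), so $\Phi(G)$ is the intersection of a family of index-$p$ normal subgroups and the diagonal map realizes $G$ as a closed subgroup of a product of copies of $\mathbb{Z}/p$. If $\Phi(G)=1$ this makes $G$ itself abelian of exponent $p$; but $G$ is infinite (being just infinite), so picking any $1\neq g\in G$, the closed subgroup $\overline{\langle g\rangle}\cong\mathbb{Z}/p$ is a non-trivial proper closed subgroup of infinite index, hence not open, contradicting just infiniteness. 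Therefore $\Phi(G)\neq1$, so $\Phi(G)$ is open and we are done. (An equivalent route avoids $\Phi(G)$: a non-trivial pro-$p$ group surjects continuously onto $\mathbb{Z}/p$, so $\overline{[G,G]}$ is a proper closed normal subgroup; if it is non-trivial it is open and $G^{\mathrm{ab}}$ is finite, whence $G/\Phi(G)$ — a quotient of $G^{\mathrm{ab}}$ — is finite; if $\overline{[G,G]}=1$ then $G$ is abelian, and for $1\neq g\in G$ the closed, hence normal, subgroup $\overline{\langle g\rangle}$ is either all of $G$, so $G$ is procyclic, or open of finite index, so $G$ is an extension of a procyclic group by a finite group — in either case topologically finitely generated.)

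The argument is short, and the only genuinely delicate point is the case $\Phi(G)=1$ (equivalently $\overline{[G,G]}=1$): one must know that an infinite just infinite pro-$p$ group cannot be abelian of exponent $p$, and more generally cannot be abelian unless it is procyclic. This is precisely where the hypothesis does real work, exploiting that such a group would otherwise contain proper closed subgroups (cyclic of order $p$, or a single $\mathbb{Z}_p$-factor) that fail to be open. Everything else is a routine application of the Frattini/Burnside dictionary for pro-$p$ groups together with the definition of just infiniteness.
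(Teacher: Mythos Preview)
Your proof is correct and follows essentially the same route as the paper: both argue via the Frattini subgroup $\Phi(G)=\overline{G^pG'}$, observe it is closed and normal, use just infiniteness to conclude it is open (hence $G$ is topologically finitely generated) once one rules out $\Phi(G)=1$, and dispose of the latter case by noting that $G$ would then be abelian and could not be just infinite. Your treatment of the $\Phi(G)=1$ case is in fact more careful than the paper's (which contains a small slip, writing ``infinitely many copies of $\mathbb{Z}$'' where it should be copies of $\mathbb{Z}/p$), and your explicit exhibition of a non-open closed normal subgroup $\overline{\langle g\rangle}\cong\mathbb{Z}/p$ makes the contradiction cleaner.
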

\begin{proof}
Recall the Frattini subgroup of $G$ is closed and it is given by $\Phi(G)=\overline{G^pG'}$ as $G$ is a pro-$p$ group. Assume for a contradiction that $\Phi(G)=1$. In particular this implies $G'=1$ so $G$ is abelian and thus $G$ is isomorphic to infinitely many copies of $\mathbb{Z}$. Therefore $G$ cannot be just infinite yielding a contradiction. Then $\Phi(G)$ is non-trivial, closed and normal, thus of finite index from just infiniteness. Hence, it is open and equivalently $G$ is topologically finitely generated.
\end{proof}

\subsection{Motivation and construction}
\label{section: just infinite pro p trivial dimension}
Our construction of a just infinite branch subgroup of $q$-adic automorphisms with zero-dimensional closure is based on the generalized Fabrykowski-Gupta groups acting on a regular rooted tree \cite{FG}. The details of its definition are rather technical; thus we first give the intuition behind it. Let us fix $q\ge 5$ and the rooted automorphism $\psi(a):=(1,\dotsc,1)\sigma$  for the remainder of the section.

The generalized Fabrykowski-Gupta group $F$ acting on the $q$-adic tree is the group generated by $a$ and $b$ where $\psi(b):=(a,1,\dotsc,1,b)$.  The recursive definition of the generator $b$ may be described as ``letting $a$ act on the leftmost descendant of the rightmost vertex at every level of the tree$"$. In other words, the only non-trivial sections of $b$ are at the vertices $q\overset{n}{\ldots}q1$ for each $n\ge 1$ and they are all equal to $a$. A natural generalization is to consider the sections of $b$ at these vertices to be finitary automorphisms that locally look like $a$, i.e. finitary automorphism of order $q$ whose non-trivial labels are all equal to the $q$-cycle $\sigma$.

Let $\{l_n\}_{n\ge 1}$ be an increasing sequence of levels of the tree. We shall specify the sequence $\{l_n\}_{n\ge 1}$ later. Given an automorphism $g\in\Gamma_q$ we set $d_0(g):=g$ and for every $i\ge 1$ we define the automorphism $d_i(g)\in \Gamma_q$ as
$$\psi_i(d_i(g))=(g,\dotsc,g).$$

For all $n\ge 1$ we define $b_n$ via
$$\psi_{l_{n}}(b_{n})=(d_0(a),d_1(a),\dotsc,d_{q^{l_{n}-1}-1}(a),1,\dotsc,1,b_{n+1}).$$

\begin{figure}[H]
\begin{center}
\begin{forest}
for tree={circle, fill=black, inner sep=0pt, outer sep=0pt, s sep=4mm,}
[,name=root
[, name=a1  [, name= a11, fill=white, edge=white [, name=a111 , fill=white, edge=white][, name=a112, fill=white, edge=white][, name=a113, fill=white, edge=white]][, name=a12, fill=white, edge=white[, name=a121, fill=white, edge=white][, name=a122, fill=white, edge=white][, name=a123, fill=white, edge=white]][,name=a13, fill=white, edge=white [, name=a131, fill=white, edge=white][, name=a132, fill=white, edge=white][, name=a133, fill=white, edge=white]]]
[, name=a2  [, name= a21 [, name=a211, fill=white, edge=white][, name=a212, fill=white, edge=white][, name=a213, fill=white, edge=white]][, name=a22[, name=a221, fill=white, edge=white][, name=a222, fill=white, edge=white][, name=a223, fill=white, edge=white]][,name=a23 [, name=a231, fill=white, edge=white][, name=a232, fill=white, edge=white][, name=a233, fill=white, edge=white]]]
[, name=a3  ,[, name= dotsfinal, edge=white [, name= a31 [, name=a311][, name=a312][, name=a313]][, name=a32[, name=a321][, name=a322][, name=a323]][,name=a33 [, name=a331][, name=a332][, name=a333]]],]];
\node (l1) [below=0cm of a1]{$\sigma$};
\node (l21) [below=0cm of a21]{$\sigma$};
\node (l22) [below=0cm of a22]{$\sigma$};
\node (ldots) [below=0.01cm of a22]{$\quad\quad\quad\quad\dotsb$};
\node (l22) [below=0cm of a23]{$\sigma$};
\node (l311) [below=0cm of a311]{$\sigma$};
\node (l312) [below=0cm of a312]{$\sigma$};
\node (l313) [below=0cm of a313]{$\sigma$};
\node (l321) [below=0cm of a321]{$\sigma$};
\node (l322) [below=0cm of a322]{$\sigma$};
\node (l323) [below=0cm of a323]{$\sigma$};
\node (l331) [below=0cm of a331]{$\sigma$};
\node (l332) [below=0cm of a332]{$\sigma$};
\node (l333) [below=0cm of a333]{$\sigma$};
\node (puntos1) [right=1.4cm of a2]{$\dotsb$};
\node (puntos2) [below=0cm of a3]{$\vdots$};
\node (puntos3) [right=0.3cm of a32]{$\dotsb$};
\node (puntos4) [left=0.5cm of puntos2]{\reflectbox{$\ddots$}};
\draw[teal, dashed, thick] (a1)--(a21)--(a23)--(a311)--(a333);
\end{forest}
\end{center}
\caption{The staircase part of the portrait of $b_n$.}
\label{fig: sigman definition}
\end{figure}
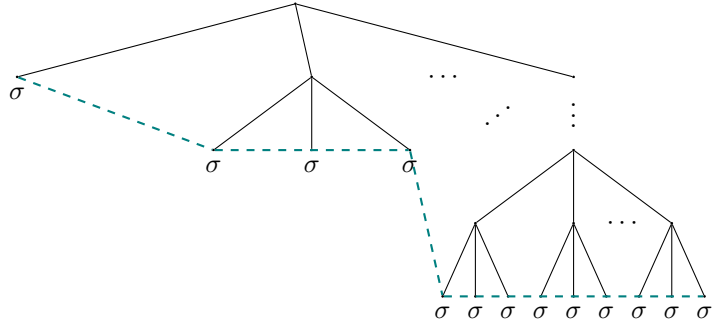

Intuitively the non-trivial labels of $b_n$ can be seen as a staircase: it starts at the leftmost vertex at level $l_{n}$ with $\sigma$ and each step of the staircase is a wider diagonal embedding of $\sigma$'s until level $l_{n}+q^{l_{n}-1}-1$, where it stops and jumps to the portrait given by $b_{n+1}$; see \cref{fig: sigman definition}. Note that for any infinite path starting at the root there is at most one vertex $v$ on this path where the automorphism $b_n$ has a non-trivial label $\sigma$ at~$v$. This ensures that $b_n$ has order $q$, as shown in \cref{lemma: order of an and bn} below.  Furthermore, if we choose the sequence $\{l_n\}_{n\ge 1}$ such that the non-trivial labels of $b_{n+1}$ appear exactly at the level where the labels of $b_n$ become trivial, a group acting transitively on the $l_n$th level and containing $b_n$ will be level-transitive  by \cref{proposition: level-transitivity in general groups}\textcolor{teal}{(ii)}. Therefore, we set $l_1=2$ and $l_{n+1}:=q^{l_n-1}$ for $n\ge 1$ to obtain such an increasing sequence of levels $\{l_n\}_{n\ge 1}$. We fix this sequence $\{l_n\}_{n\ge 1}$ for the remainder of the section.

For every $n\ge 1$ we define the finite group $A_n:=\langle d_i(a)~|~i=0,1,\dotsc, l_n-1\rangle$ of finitary automorphisms. Lastly, for each $n\ge 1$ we define the group $G_n:=\langle A_n,b_{n}\rangle$. Clearly the groups $G_n$ are not self-similar. A word of caution for the reader. The group $G_n$ should not be confused with the $n$th congruence quotient of a group $G$. In order to avoid confusion we shall not use the notation $G_n$ for the congruence quotients and the notation $G_n$ will refer to the group $G_n=\langle A_n,b_n\rangle$ for the remainder of the section.

\subsection{Properties of the groups $G_n$} Now we prove that the groups $G_n$ have the desired properties for every $n\ge 1$.

\begin{lemma}
\label{lemma: order of an and bn}
The elements $d_i(a)$ and $b_n$ have order $q$ for all $i\ge 0$ and $n\ge 1$.
\end{lemma}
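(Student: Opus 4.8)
The plan is to handle the two families of elements separately, the finitary elements $d_i(a)$ being essentially immediate and $b_n$ requiring the staircase structure of its portrait.

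For $d_i(a)$ the argument is short: by definition $\psi_i(d_i(a))=(a,\dots,a)$, and since $\psi_i$ is an isomorphism from $\mathrm{St}(i)$ onto a direct power of $\mathfrak{A}$, the order of $d_i(a)$ equals the order of $a$. As $\psi(a)=(1,\dots,1)\sigma$, the automorphism $a$ merely permutes the $q$ subtrees hanging from the root cyclically, so its order is that of $\sigma$, namely $q$. Hence every $d_i(a)$ has order $q$, and in particular $d_0(a)=a$ does.

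For $b_n$ I would exploit the fact recorded just before the statement, that along any ray from the root $b_n$ has at most one vertex carrying a non-trivial label, and that this label is the $q$-cycle $\sigma$. If one wishes to re-derive this: fix a vertex $w$; the root-to-$w$ path meets level $l_n$ in a unique vertex, say the $j$-th from the left; reading off $\psi_{l_n}(b_n)$, if $1\le j\le q^{l_n-1}$ then the section of $b_n$ there is $d_{j-1}(a)$, whose only non-trivial labels — all equal to $\sigma$ — lie on a single level, so at most one proper ancestor of $w$ carries a non-trivial $b_n$-label; if $q^{l_n-1}<j<q^{l_n}$ that section is trivial; and if $j=q^{l_n}$ the section is $b_{n+1}$, which reduces the question to the truncated word, of strictly smaller length and with $l_{n+1}>l_n$ in place of $l_n$. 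The key point is that the definition of $b_n$ is corecursive, so one cannot induct downward on $n$; instead this must be organised as a finite descent on the length of $w$, using $l_n\to\infty$.

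Granting this, I would compute the action of the powers of $b_n$ on vertices directly. Writing $w=x_1\cdots x_k$, one has $(x_1\cdots x_k)b_n=x_1^{\tau_1}\cdots x_k^{\tau_k}$ with $\tau_j$ the $b_n$-label at the ancestor $x_1\cdots x_{j-1}$; by the above at most one $\tau_j$ is non-trivial, in which case it equals $\sigma$. If there is no such $j$, then $b_n$ fixes $w$. If $v:=x_1\cdots x_{j-1}$ is the unique active ancestor, then $(w)b_n=x_1\cdots x_{j-1}x_j^{\sigma}x_{j+1}\cdots x_k$ differs from $w$ only in the $j$-th letter; its proper ancestors are ancestors of $v$, the vertex $v$ itself, and descendants of $v$, and none of these other than $v$ carries a non-trivial label, being on a ray through $v$. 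Thus $v$ is again the unique active ancestor at the next step, and inductively $(w)b_n^m=x_1\cdots x_{j-1}x_j^{\sigma^m}x_{j+1}\cdots x_k$ for all $m\ge1$. Since $\sigma^q=1$ this gives $b_n^q=1$. Finally, taking $w=v_1 x$ with $v_1$ the leftmost vertex of level $l_n$ (so that $(b_n)|_{v_1}=d_0(a)=a$ has label $\sigma$ at $v_1$) and $x$ an arbitrary letter, one gets $(v_1 x)b_n^d=v_1 x^{\sigma^d}$, so $b_n^d=1$ forces $\sigma^d=1$, i.e. $q\mid d$; hence $b_n$ has order exactly $q$. The only genuine obstacle is the corecursive definition of $b_n$, circumvented throughout by descending on word length rather than on $n$.
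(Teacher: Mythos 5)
Your proof is correct. It rests on the same structural observation the paper records just before the lemma (along any ray from the root the portrait of $b_n$ has at most one non-trivial label, and that label is $\sigma$), but you organise the argument differently: you compute the orbit of an arbitrary vertex $w$ under powers of $b_n$ directly, showing $(w)b_n^m=x_1\cdots x_{j-1}x_j^{\sigma^m}x_{j+1}\cdots x_k$, whereas the paper applies $\psi_{l_n}$ to $b_n^q$, observes that the coordinates coming from the blocks $d_i(a)$ vanish so that $\psi_{l_n}(b_n^q)=(1,\dotsc,1,b_{n+1}^{\,q})$, and concludes via the criterion that an automorphism with all labels trivial is trivial. Your handling of the corecursive definition by descending on word length (using that $l_n$ is strictly increasing) is exactly the point that makes either argument rigorous, and it is left implicit in the paper's terse final clause ``as for any vertex $v$ we have $b_n^q|_v^1=1$''; your version spells it out. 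You also gain something the paper glosses over: since $q$ is a prime power rather than a prime, $b_n^q=1$ alone only bounds the order by $q$, and your exhibition of a vertex $v_1x$ with $(v_1x)b_n^d=v_1x^{\sigma^d}$ pins the order down to exactly $q$. The trade-off is length: the paper's section computation is three lines, while your orbit argument requires the careful bookkeeping of which ancestor of $(w)b_n$ is active at each step.
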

\begin{proof}
The elements $d_i(a)$ have clearly order $q$ for all $i\ge 0$. Thus, it follows that 
\begin{align*}
    \psi_{l_{n}}(b_{n}^{\,q})&=(d_0(a)^q, d_1(a)^q,\dotsc,d_{l_{n+1}-1}(a)^q,1,\dotsc,1,b_{n+1}^{\,q})\\
    &=(1,\dotsc,1,{b_{n+1}^{\,q}})=1
\end{align*}
as for any vertex $v$ of the tree, we have $b_{n}^{\,q}|_v^{1}=1$.
\end{proof}

\begin{lemma}
    \label{lemma: An is elementary abelian}
    For every $n\ge 1$, the group $A_n$ is isomorphic to the direct product $C_q\times\dotsb\times C_q$.
\end{lemma}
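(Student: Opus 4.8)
The plan is to build an explicit isomorphism $A_n\xrightarrow{\ \sim\ }\langle\sigma\rangle\times\overset{l_n}{\dotsb}\times\langle\sigma\rangle$ by ``reading off labels level by level''. The starting point is that each generator $d_i(a)$, for $0\le i\le l_n-1$, has a very rigid portrait: from $\psi_i(d_i(a))=(a,\dotsc,a)$ and $\psi(a)=(1,\dotsc,1)\sigma$ one sees that $d_i(a)$ has label $\sigma$ at every vertex of level $i$ and trivial label at every other vertex; in particular $d_i(a)\in\mathfrak{A}_{l_n}$ and its label at level $\ell$ is \emph{constant}, i.e.\ independent of the chosen vertex of that level.

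First I would check that this ``level-homogeneous with labels in $\langle\sigma\rangle$'' property is preserved under products and inverses. Using the label-composition formula $(gh)|_v^1=g|_v^1\,h|_{vg}^1$ together with the fact that every tree automorphism preserves levels, one sees that if $g,h\in\mathfrak{A}_{l_n}$ have, for each level $\ell<l_n$, constant labels $\lambda_\ell(g),\lambda_\ell(h)\in\langle\sigma\rangle$, then so does $gh$, with $\lambda_\ell(gh)=\lambda_\ell(g)\lambda_\ell(h)$; the identity $(g^{-1})|_v^1=(g|_{vg^{-1}}^1)^{-1}$ handles inverses. Hence the automorphisms with this property form a subgroup of $\mathfrak{A}_{l_n}$ containing all $d_i(a)$, and therefore containing $A_n$ (alternatively, $A_n\le\Gamma_q=W_{\langle\sigma\rangle}$ already gives labels in $\langle\sigma\rangle$).

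This furnishes a well-defined group homomorphism
\[
\Lambda\colon A_n\longrightarrow \langle\sigma\rangle\times\overset{l_n}{\dotsb}\times\langle\sigma\rangle,\qquad g\longmapsto\big(\lambda_0(g),\dotsc,\lambda_{l_n-1}(g)\big).
\]
It is injective because a finitary automorphism all of whose labels vanish is trivial, and it is surjective because $\Lambda(d_i(a))$ is the tuple with $\sigma$ in the $i$th coordinate and $1$ elsewhere, and these $l_n$ tuples generate the direct product. Since $\sigma=(1\,2\,\dotsb\,q)$ has order $q$, we conclude $A_n\cong\langle\sigma\rangle\times\overset{l_n}{\dotsb}\times\langle\sigma\rangle\cong C_q\times\dotsb\times C_q$, as claimed; as a byproduct one also reads off that the $d_i(a)$ pairwise commute and that $A_n$ is internally their direct product.

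The only step requiring genuine care is the verification that level-homogeneity with $\langle\sigma\rangle$-labels propagates through multiplication; everything else is formal bookkeeping. It works here precisely because each $d_i(a)$ fixes every vertex up to and including level $i$ and applies the \emph{same} permutation $\sigma$ on the children of each level-$i$ vertex, so the vertex ``$vg$'' appearing in the label formula stays on the same level and never meets a non-constant label. It is this homogeneity — rather than mere finitariness or labels lying in $\langle\sigma\rangle$, which alone would only yield an abelian group of exponent $q$ — that forces $A_n$ to be homocyclic.
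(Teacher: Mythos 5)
Your proof is correct, but it takes a genuinely different route from the paper's. The paper verifies an internal direct product decomposition directly: it quotes \cref{lemma: order of an and bn} for the orders of the generators, observes from the portraits that $\langle d_i(a)\rangle\cap\langle d_j(a)\mid j\ne i\rangle=1$, and reduces commutativity to the single computation $\psi(d_i(a)^a)=\sigma^{-1}(d_{i-1}(a),\dotsc,d_{i-1}(a))\sigma=\psi(d_i(a))$, exploiting the recursion $\psi(d_i(a))=(d_{i-1}(a),\dotsc,d_{i-1}(a))$. You instead build an explicit external isomorphism by reading off the constant level-labels, after checking that level-homogeneity of labels is multiplicative via $(gh)|_v^1=g|_v^1\,h|_{(v)g}^1$; injectivity and surjectivity of your map $\Lambda$ then deliver the orders, the commutativity and the direct-product structure all at once, without invoking \cref{lemma: order of an and bn}. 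The paper's argument is shorter and reuses the self-similar recursion that drives the rest of the section; yours is more self-contained and makes transparent exactly which feature of the $d_i(a)$ (homogeneity of labels along each level) is responsible for the homocyclic structure. Both are complete.

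One quibble with your closing remark: finitariness together with labels in $\langle\sigma\rangle$ certainly does \emph{not} ``yield an abelian group of exponent $q$'' --- such automorphisms form a finite iterated wreath product of $C_q$'s, which is far from abelian. What you presumably mean (and what your proof actually uses) is that homogeneity is the extra ingredient without which no abelianness at all would follow; the sentence as written overstates what the weaker hypotheses give. This does not affect the validity of the proof, which is complete before that sentence.
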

\begin{proof}
    By \cref{lemma: order of an and bn} the generators of $A_n$ are all of order $q$, and from their portraits it is immediate that $\langle d_i(a)\rangle\cap \langle d_j(a)~|~j\ne i\rangle=1$ for $i=0,1,\dotsc,l_n-1$. Therefore it is enough to show that these generators commute. By definition, it is clear that $\psi(d_i(a))=(d_{i-1}(a),\dotsc,d_{i-1}(a))$, so it suffices to show that $d_i(a)$ commutes with~$a$. This follows from
    $$\psi(d_i(a)^{a})=\sigma^{-1}(d_{i-1}(a),\dotsc,d_{i-1}(a))\sigma=(d_{i-1}(a),\dotsc,d_{i-1}(a))=\psi(d_i(a))$$
    and the injectivity of the map $\psi$.
\end{proof}

\begin{lemma}
    \label{lemma: G is level-transitive}
    For $n\ge 1$, the group $G_n$ is level-transitive.
\end{lemma}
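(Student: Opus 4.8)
The plan is to verify the hypothesis of \cref{proposition: level-transitivity in general groups}\textcolor{teal}{(ii)} at every level: for each $k\ge 0$ I will exhibit a vertex $v$ at level $k$ together with an element $g\in G_n$ with $g\in\mathrm{st}_{G_n}(v)$ and label $g|_v^1=\sigma$, which suffices since $\langle\sigma\rangle$ acts transitively on $\{1,\dots,q\}$. The elementary fact used throughout is that $d_i(a)$ fixes every vertex on the first $i$ levels and carries the label $\sigma$ at every vertex of level $i$ (immediate from $\psi_i(d_i(a))=(a,\dots,a)$ and $\psi(a)=(1,\dots,1)\sigma$). For $0\le k\le l_n-1$ this settles the matter at once: take $g=d_k(a)\in A_n\le G_n$ and any vertex $v$ at level $k$.

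For $k\ge l_n$ I will instead use the generator $b_n$, exploiting the nested ``staircase'' structure of its portrait. First I would record that the section of $b_m$ at the rightmost vertex $\omega_m$ of level $l_m$ equals $b_{m+1}$, and that $b_m\in\mathrm{St}(l_m)$ fixes $\omega_m$; hence, writing $u_j:=\omega_n\omega_{n+1}\cdots\omega_{n+j-1}$ (a word of length $T_{j-1}$, where $T_j:=l_n+\dots+l_{n+j}$, $T_{-1}:=0$, and $u_0$ is empty), an easy induction on $j$ gives that $b_n$ fixes $u_j$ and $b_n|_{u_j}=b_{n+j}$. Secondly, the ``own'' staircase of $b_m$ — coming from the blocks $d_0(a),\dots,d_{l_{m+1}-1}(a)$ at level $l_m$ — places, for every $\ell$ with $l_m\le\ell\le l_m+l_{m+1}-1$, a $\sigma$-label of $b_m$ at a vertex of level $\ell$ that $b_m$ fixes: with $i:=\ell-l_m$, below the $(i+1)$-st vertex of level $l_m$ the automorphism $b_m$ acts as $d_i(a)$. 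The choice $l_{m+1}=q^{l_m-1}$ is exactly what makes the range $[l_m,l_m+l_{m+1}-1]$ end one level short of where the portrait of $b_{m+1}$ (embedded via $\omega_m$) begins, so that, shifted by the lengths $T_{j-1}$, these ranges tile $[l_n,\infty)$ with neither gaps nor overlaps.

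Combining the two observations, for a given $k\ge l_n$ I would take the unique $j\ge 0$ with $T_j\le k<T_{j+1}$, pass to $b_{n+j}=b_n|_{u_j}$, set $i:=k-T_j\in\{0,\dots,l_{n+j+1}-1\}$, let $z$ be the $(i+1)$-st vertex of level $l_{n+j}$ and $w$ any vertex of length $i$, and put $v:=u_j\,z\,w$. Then $v$ has level $T_{j-1}+l_{n+j}+i=k$, the element $b_n$ fixes $v$ (it fixes $u_j$, acts below $u_j$ as $b_{n+j}$ which fixes $z$, and acts below $u_jz$ as $d_i(a)$ which fixes $w$), and $b_n|_v^1=d_i(a)|_w^1=\sigma$. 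Since $k$ was arbitrary, \cref{proposition: level-transitivity in general groups}\textcolor{teal}{(ii)} gives that $G_n$ is level-transitive.

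The only real work is the index bookkeeping in the middle two paragraphs: confirming that the staircase of $b_n$ together with the recursively embedded staircases of $b_{n+1},b_{n+2},\dots$ meets \emph{every} level $\ge l_n$, which is precisely where the recursion $l_{n+1}=q^{l_n-1}$ — rather than some coarser spacing of the $\{l_n\}$ — is used. Everything else reduces to the two facts already isolated: each $b_m$ lies in $\mathrm{St}(l_m)$, and each $d_i(a)$ fixes the first $i$ levels while carrying the label $\sigma$ at every vertex of level $i$.
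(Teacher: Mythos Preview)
Your proof is correct and follows essentially the same route as the paper's: both invoke \cref{proposition: level-transitivity in general groups}\textcolor{teal}{(ii)}, handle levels $0\le k\le l_n-1$ via $d_k(a)\in A_n$, and for $k\ge l_n$ use that $b_n$ fixes a suitable vertex $v$ at level $k$ with label $\sigma$ there, appealing to the recursion $l_{n+1}=q^{l_n-1}$. The only difference is that the paper states the existence of such a vertex in one sentence ``by the choice of the sequence $\{l_n\}_{n\ge 1}$'', whereas you carry out the index bookkeeping explicitly via the levels $T_j$ and the path $u_j$; your more detailed version is a faithful unpacking of what the paper leaves implicit.
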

\begin{proof}
The group $A_n$ acts transitively on the first $l_n$ levels of the $q$-adic tree by \cref{proposition: level-transitivity in general groups} as for any level $k<l_n$ there exists $d_k(a)\in \mathrm{St}_{A_n}(k)$ such that $\psi_v(d_k(a))$ acts transitively on the set $\{1,\dotsc,q\}$ for every vertex $v$ at the $k$th level. In particular $\mathrm{St}_{A_n}(1)$ acts transitively at the first $l_n-1$ levels of the tree hanging from any vertex at the first level of the $q$-adic tree. Now for any other level $k\ge l_n$ of the tree there exists a vertex $v$ at the $k$th level such that $b_{n}\in\mathrm{st}_{G_n}(v)$ and $b_{n}|_v^1=a$ by the choice of the sequence $\{l_n\}_{n\ge 1}$, so the result follows from \cref{proposition: level-transitivity in general groups}.
\end{proof}

\begin{proposition}
    \label{proposition: properties of Gn}
    For $n\ge 1$, the group $G_n$ satisfies the following properties:
    \begin{enumerate}[\normalfont(i)]
        \item we have $G_n=\mathrm{St}_{G_n}(l_{n})\rtimes A_n=\langle b_{n}\rangle^{G_n}\rtimes A_n$;
        \item we have $\psi_v(\mathrm{St}_{G_{n}}(l_{n}))=G_{n+1}$ for any vertex $v$ at the $l_{n}$th level of the tree. In particular $\psi_{l_{n}}(\mathrm{St}_{G_{n}}(l_{n}))\le G_{n+1}\times\dotsb\times G_{n+1}$ is a subdirect product;
        \item we have $G_n'=\langle [b_n,d_i(a)]~|~i=0,1,\dotsc,l_n-1\rangle^{G_n}$.
    \end{enumerate}
\end{proposition}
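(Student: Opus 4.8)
The plan is to establish the three items in order, as (ii) and (iii) will lean on (i). First I would prove (i). Since $A_n$ consists of finitary automorphisms of depth at most $l_n$, every generator $d_i(a)$ with $0\le i\le l_n-1$ has trivial labels below level $l_n-1$, hence $A_n\cap \mathrm{St}_{G_n}(l_n)=1$; and conversely every generator of $G_n$ projects, modulo $\mathrm{St}_{G_n}(l_n)$, into $A_n$, because the only non-trivial labels of $b_n$ above level $l_n$ are trivial (the staircase of $b_n$ starts exactly at level $l_n$). This gives $G_n=\mathrm{St}_{G_n}(l_n)\rtimes A_n$. For the second description, observe that $b_n\in \mathrm{St}_{G_n}(l_n)$ and $A_n$ normalizes nothing smaller; since $G_n=\langle A_n,b_n\rangle$ and $A_n$ lies in a complement, the level stabilizer $\mathrm{St}_{G_n}(l_n)$ is the normal closure of $b_n$, i.e. $\langle b_n\rangle^{G_n}$, by a standard argument: $\mathrm{St}_{G_n}(l_n)$ is generated as a normal subgroup by those Schreier generators that are not finitary, and conjugating $b_n$ by elements of $A_n$ suffices because $A_n$ is a transversal for $\mathrm{St}_{G_n}(l_n)$ in $G_n$ by the splitting just proved.

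For (ii), the key input is the recursive definition
$$\psi_{l_n}(b_n)=(d_0(a),d_1(a),\dotsc,d_{q^{l_n-1}-1}(a),1,\dotsc,1,b_{n+1}),$$
together with level-transitivity of $G_n$ from \cref{lemma: G is level-transitive}. Level-transitivity on level $l_n$ means that $\psi_v(\mathrm{St}_{G_n}(l_n))$ is the same subgroup of $\mathfrak{A}$ for every vertex $v$ at level $l_n$; call it $H$. The coordinates appearing in $\psi_{l_n}(b_n)$ show that $a=d_0(a)$ and $b_{n+1}$ both lie in $H$ (the latter as the rightmost coordinate, the former as the leftmost), and conjugating $b_n$ by $A_n$ — which acts transitively on level $l_n$ and whose sections are finitary — produces, at a fixed vertex $v$, all the $d_i(a)$ for $0\le i\le l_n-1$ as sections, hence $A_{n+1}=\langle d_i(a)\mid 0\le i\le l_n-1\rangle\le H$. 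Thus $H\ge \langle A_{n+1},b_{n+1}\rangle=G_{n+1}$. The reverse inclusion $H\le G_{n+1}$ is the content of showing the construction stays inside the prescribed group: every section at a level-$l_n$ vertex of a generator of $\mathrm{St}_{G_n}(l_n)$ is visibly built from the $d_i(a)$'s and from $b_{n+1}$ (using $l_{n+1}=q^{l_n-1}$ so that the staircase of $b_n$ dovetails with that of $b_{n+1}$), giving $H=G_{n+1}$. The subdirect statement is then immediate from level-transitivity: the projection to each of the $q^{l_n}$ coordinates is onto $G_{n+1}$.

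For (iii), write $N:=\langle [b_n,d_i(a)]\mid 0\le i\le l_n-1\rangle^{G_n}$. Since $A_n\cong C_q^{\,l_n}$ is abelian by \cref{lemma: An is elementary abelian}, in the quotient $G_n/N$ the images of all $d_i(a)$ commute with the image of $b_n$ and with each other, and $G_n/N=\langle \bar b_n\rangle\cdot \langle \overline{d_i(a)}\rangle$ is generated by commuting elements, hence abelian; therefore $G_n'\le N$. Conversely $N\le G_n'$ trivially, as $N$ is generated by commutators together with $G_n$-conjugates of commutators, and $G_n'$ is normal. The main obstacle, and the step deserving the most care, is (ii): one must verify cleanly that the sections of all of $\mathrm{St}_{G_n}(l_n)$ — not merely of $b_n$ — at a vertex of level $l_n$ land exactly in $G_{n+1}$, which is where the precise choice $l_{n+1}=q^{l_n-1}$ and the ``staircase'' bookkeeping of \cref{fig: sigman definition} enter; items (i) and (iii) are then comparatively routine.
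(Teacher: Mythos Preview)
Your strategy matches the paper's proof exactly: the paper argues (i) from $A_n$ being finitary (so $A_n\cap\mathrm{St}(l_n)=1$ and $b_n\in\mathrm{St}_{G_n}(l_n)$), derives (ii) from (i) together with level-transitivity and the recursive definition of $b_n$, and gets (iii) immediately from \cref{lemma: An is elementary abelian}. Your write-up simply expands these one-line justifications.

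One index slip to fix in (ii): you write ``all the $d_i(a)$ for $0\le i\le l_n-1$'' and then ``$A_{n+1}=\langle d_i(a)\mid 0\le i\le l_n-1\rangle$'', but the correct range is $0\le i\le l_{n+1}-1$, since the coordinates of $\psi_{l_n}(b_n)$ are $d_0(a),\dotsc,d_{q^{l_n-1}-1}(a)=d_{l_{n+1}-1}(a)$ together with $b_{n+1}$; as written, the displayed group is $A_n$, not $A_{n+1}$. Also note that your claim that $\psi_v(\mathrm{St}_{G_n}(l_n))$ is independent of $v$ uses more than bare level-transitivity: it relies on the fact that already $A_n$ acts transitively on level $l_n$ with \emph{trivial} sections there, so conjugation by $A_n$ genuinely permutes the $\psi_{l_n}$-coordinates rather than conjugating them. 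With these two points tightened, your argument is complete.
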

\begin{proof}
Part (i) follows from $A_n$ being a subgroup of finitary automorphisms of depth $l_{n}-1$. Indeed $\mathrm{St}_{\Gamma_q}(l_{n})$ is invariant under the action by conjugation of $A_n$ and since $b_{n}\in \mathrm{St}_{G_n}(l_{n})=\mathrm{St}_{\Gamma_q}(l_{n})\cap G_n$ by construction, the result follows. Part~(ii) is a direct consequence of part (i), of the level-transitivity of $G_n$ and of the recursive definition of $b_n$. Lastly, part (iii) follows directly from \cref{lemma: An is elementary abelian}.
\end{proof}

To simplify notation we introduce the following \textit{block notation}. For every $n\ge 1$ we define the automorphisms $\mathbf{a_{n+1}}$ and $\mathbf{b_{n+1}}$ via
$$\psi_{l_{n}-1}(\mathbf{a_{n+1}})=(d_0(a),\dotsc,d_{l_{n+1}-1}(a))\text{ and }\psi_{l_{n}-1}(\mathbf{b_{n+1}})=(1,\dotsc,1,b_{n+1}).$$
Therefore the definition of $b_n$ looks as follows in block notation
$$\psi(b_{n})=(\mathbf{a_{n+1}},1,\dotsc,1,\mathbf{b_{n+1}}),$$
where $\mathbf{a_{n+1}}$ and $\mathbf{b_{n+1}}$ can be regarded as the main blocks conforming the automorphism $b_n$. Note that $[\mathbf{b_{n+1}},\mathbf{b}^{h}_{\mathbf{n+1}}]=1$ for every $h\in A_n$ and every $n\ge 1$.  The block notation suggests that one can use the same approach as one does for self-similar automata groups, such as the generalized Fabrykowski-Gupta groups, to obtain results on the branchness and just infiniteness of the groups $G_n$. Before we do this, we introduce a last piece of notation. For an automorphism $g\in A_n$ the block $\mathbf{g_*}$ is defined as
$$\psi_{l_{n}-1}(\mathbf{g_{*}})=(*,\dotsc,*,g),$$
where each $*$ represents a (possibly distinct) indeterminate automorphism in $A_{n+1}$. The block $\mathbf{g_*}$ is clearly not uniquely determined, but it provides a useful notation for the proof of \cref{lemma: G is branch}. First we need a general result on branch structures for level-transitive groups. The proof is essentially the same as the one in \cite[Proposition 2.18]{GGSGustavo}.

\begin{lemma}
    \label{lemma: GGS Amaia}
    Let $G$ be a level-transitive group and let $N$ be a normal subgroup of~$G$. Assume further that $\psi_n(\mathrm{St}_G(n))$ is a subdirect product of $H\times\overset{q^n}{\ldots}\times H$ for some group $H$. Let $S$ be a subset of $H$ and set $L=\langle S\rangle^H$. If for some $1\le j\le q^n$ we have
    $$1\times\dotsb\times 1\times S\times 1\times\overset{j}{\dotsb}\times 1\subseteq \psi_n(\mathrm{St}_G(n))$$
    then $L\times\overset{q^n}{\ldots}\times L\le \psi_n(\mathrm{St}_G(n))$.
\end{lemma}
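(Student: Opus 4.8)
The plan is to use the level-transitivity of $G$ to ``spread'' the given subset $S$ (sitting in the $j$th coordinate) across all $q^n$ coordinates, and then invoke the normality of $N$ together with the subdirect hypothesis to promote the conjugates of $S$ to the normal closure $L = \langle S\rangle^H$ in each slot. First I would observe that since $G$ acts transitively on the $n$th level of the tree, for any $1\le i\le q^n$ there is an element $g\in G$ with $(v_j)g = v_i$ where $v_1,\dots,v_{q^n}$ are the vertices at level $n$; such a $g$ need not stabilize level $n$, but it permutes these vertices, and conjugation by $g$ sends $\psi_n(\mathrm{St}_G(n))$ to itself (as $\mathrm{St}_G(n)\trianglelefteq G$) while permuting the direct factors according to the induced permutation on $\{1,\dots,q^n\}$. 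Applying this to the hypothesis $1\times\dots\times S\times\overset{j}{\dots}\times 1\subseteq \psi_n(\mathrm{St}_G(n))$ yields $1\times\dots\times S\times\overset{i}{\dots}\times 1\subseteq \psi_n(\mathrm{St}_G(n))$ for every $i$, since the subgroup $\psi_n(\mathrm{St}_G(n))$ is carried to itself.

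Next I would generate: because $\psi_n(\mathrm{St}_G(n))$ is a subgroup of $H\times\overset{q^n}{\dots}\times H$ containing $1\times\dots\times S\times\overset{i}{\dots}\times 1$ for each fixed $i$, it contains the subgroup generated by these, which is $1\times\dots\times \langle S\rangle\times\overset{i}{\dots}\times 1$, and multiplying across all $i$ gives $\langle S\rangle\times\overset{q^n}{\dots}\times\langle S\rangle\le \psi_n(\mathrm{St}_G(n))$. To upgrade $\langle S\rangle$ to its normal closure $L=\langle S\rangle^H$ in each factor, I would use the subdirect hypothesis: for each $h\in H$ and each coordinate $i$ there is an element of $\psi_n(\mathrm{St}_G(n))$ whose $i$th coordinate is $h$; conjugating the element $1\times\dots\times\langle S\rangle\times\overset{i}{\dots}\times 1$ (which lies in $\psi_n(\mathrm{St}_G(n))$) by this element produces $1\times\dots\times\langle S\rangle^{h}\times\overset{i}{\dots}\times 1\subseteq \psi_n(\mathrm{St}_G(n))$, again because $\psi_n(\mathrm{St}_G(n))$ is a subgroup. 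Ranging over all $h\in H$ and taking the subgroup generated, we get $1\times\dots\times L\times\overset{i}{\dots}\times 1\le \psi_n(\mathrm{St}_G(n))$ for every $i$, and multiplying over $i$ gives $L\times\overset{q^n}{\dots}\times L\le \psi_n(\mathrm{St}_G(n))$, as desired.

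The main subtlety — and the place where one must be slightly careful rather than the place that is genuinely hard — is the first step: a transitivity element $g\in G$ realizing a given permutation of the level-$n$ vertices is generally not in $\mathrm{St}_G(n)$, so one must argue via the normality $\mathrm{St}_G(n)\trianglelefteq G$ that conjugation by $g$ is an automorphism of $\mathrm{St}_G(n)$, and then track how this automorphism acts on the coordinates under the identification $\psi_n$; concretely, conjugation by $g$ on $\mathrm{St}_G(n)$ corresponds, under $\psi_n$, to permuting the $q^n$ coordinates by the permutation $g$ induces on level $n$ (and additionally conjugating within each coordinate by the appropriate section $g|_{v}$, but this extra conjugation is harmless since it only replaces $S$ in a given slot by a conjugate, which still lies in $L$ after we pass to the normal closure). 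Everything else is a routine ``fill the coordinates and close under the group operation'' argument inside the subgroup $\psi_n(\mathrm{St}_G(n))$. I note the role of $N$ in the statement appears vestigial for this particular conclusion — it is presumably used in the application — so I would not need it in the proof itself.
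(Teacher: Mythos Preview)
Your argument is the standard one and matches what the paper intends: the paper omits the proof entirely, citing \cite[Proposition~2.18]{GGSGustavo}, and you have reproduced exactly that route (spread the slot containing $S$ to all coordinates via level-transitivity, then close up to the normal closure $L$ in each coordinate via the subdirect hypothesis).

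One point deserves tightening. Your dismissal of the coordinate-twist as ``harmless since it only replaces $S$ by a conjugate, which still lies in $L$'' is too quick: the section $g|_{v_j}$ of a spreading element $g\in G$ need not lie in $H$, so $s^{g|_{v_j}}$ is not \emph{a priori} an $H$-conjugate of $s$, and hence not obviously in $L=\langle S\rangle^H$. One can check, using the subdirect hypothesis together with normality of $\mathrm{St}_G(n)$ in $G$, that $g|_{v_j}$ at least normalises $H$, but even then the spread only yields $L^{g|_{v_j}}$ in the new slot rather than $L$ itself. In the cited reference and in the paper's own application (see the proof of \cref{lemma: G is branch}) this issue evaporates, because the elements realising transitivity on level $n$ can be taken from the finitary part and thus have \emph{trivial} sections at that level; you should make this explicit, or else supply an additional argument at that step.

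You are also right that $N$ is vestigial in the statement as printed. In fact the version actually used in the paper (again in \cref{lemma: G is branch}) has $N\cap\mathrm{St}_G(n)$ in place of $\mathrm{St}_G(n)$ in both hypothesis and conclusion; your proof adapts immediately to that formulation by invoking the normality of $N$ in $G$ alongside that of $\mathrm{St}_G(n)$.
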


\begin{lemma}
    \label{lemma: G is branch}
   For $n\ge 1$, we have
   $$\psi_{l_{n}}(G_n'')\ge G_{n+1}'\times\overset{q^{l_{n}}}{\dotsb}\times G_{n+1}'.$$
\end{lemma}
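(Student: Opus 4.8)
## Proof proposal for Lemma (that $\psi_{l_n}(G_n'')\ge G_{n+1}'\times\cdots\times G_{n+1}'$)

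The plan is to produce, for a single chosen coordinate $j$ among the $q^{l_n}$ vertices at level $l_n$, all commutators $[b_{n+1},d_i(a)]$ sitting in the $j$-th slot and trivially elsewhere, lying inside $\psi_{l_n}(\mathrm{St}_{G_n''}(l_n))$; then invoke \cref{lemma: GGS Amaia} with $G=G_n$, $N=G_n''$, $H=G_{n+1}$, $S=\{[b_{n+1},d_i(a)]\mid i=0,\dots,l_{n+1}-1\}$ and $L=\langle S\rangle^{G_{n+1}}=G_{n+1}'$ (using \cref{proposition: properties of Gn}(iii) applied at level $n+1$, so that $L$ is indeed the normal closure generating $G_{n+1}'$). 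The hypotheses of \cref{lemma: GGS Amaia} are available: $G_n$ is level-transitive by \cref{lemma: G is level-transitive}, and $\psi_{l_n}(\mathrm{St}_{G_n}(l_n))$ is a subdirect product of $G_{n+1}\times\overset{q^{l_n}}{\cdots}\times G_{n+1}$ by \cref{proposition: properties of Gn}(ii). The only genuine work is the first sentence: exhibiting those single-coordinate commutators inside $G_n''$.

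The mechanism I would use is the standard self-similar commutator trick in block notation. Write $\psi(b_n)=(\mathbf{a_{n+1}},1,\dots,1,\mathbf{b_{n+1}})$, where the last block carries $b_{n+1}$ and the first block carries the ``staircase'' $\mathbf a_{n+1}$. For $h\in A_n$, conjugating $b_n$ by $h$ permutes the $q$ blocks at the first level (and twists inside them by elements of $A_{n+1}$), so $b_n^{h}$ has the form $(\mathbf{a_{n+1}}^{*},\dots)$ with the $\mathbf{b_{n+1}}$-block moved to a block indexed by $(q)h$. By choosing $h\in A_n$ appropriately (using level-transitivity of $A_n$ on the first $l_n$ levels, \cref{lemma: G is level-transitive}) one arranges two conjugates $b_n^{h_1}$ and $b_n^{h_2}$ of $b_n$ whose $\mathbf b_{n+1}$-blocks land in the \emph{same} first-level block but whose other (staircase) blocks are disjointly supported; then $[b_n^{h_1},b_n^{h_2}]$ projects, in that first-level block, to a commutator $[\mathbf b_{n+1}^{*}, d_i(a)^{*}]$-type element and is trivial in all other first-level blocks. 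Iterating this one level deeper (or, more efficiently, taking a further commutator of such a first-level commutator with a suitable conjugate of $b_n$, or with a conjugate of some $d_j(a)$ that acts on the relevant sub-block) pushes the support down to the block at level $l_n-1$ that actually carries $b_{n+1}$, yielding an element of $G_n'$ — in fact of $G_n''$ after one more commutator — whose section at a single level-$l_n$ vertex $j$ equals $[b_{n+1},d_i(a)]$ (up to the $A_{n+1}$-twists, which are harmless since we are generating a normal closure) and whose sections at all other level-$l_n$ vertices are trivial. Running $i$ over $0,\dots,l_{n+1}-1$ gives the required set $S$ in slot $j$ inside $\psi_{l_n}(\mathrm{St}_{G_n''}(l_n))$.

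I would organize the bookkeeping via the ``$\mathbf g_*$'' block notation introduced just before the statement: an element of $G_n$ whose portrait at level $l_n-1$ reads $(*,\dots,*,g)$ with $g\in A_{n+1}$ and indeterminate $*$'s in $A_{n+1}$. The point is that $\mathbf{b_{n+1}}$ commutes with all its $A_n$-conjugates (the remark $[\mathbf{b_{n+1}},\mathbf b_{n+1}^{h}]=1$), while $\mathbf{a_{n+1}}$, whose last-coordinate section at level $l_n-1$ is $d_0(a)=a$, does \emph{not}; so a commutator of the block $\mathbf b_{n+1}$ against a conjugate of the block $\mathbf a_{n+1}$ produces exactly $[b_{n+1},a^{*}]$ in the $b_{n+1}$-coordinate and, because of the disjoint supports arranged above, nothing elsewhere. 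Taking commutators with $d_i(a)$ in place of $a$ (these lie in $A_n\le G_n$ and have section $d_{i-\text{shift}}(a)$ in the relevant coordinate) sweeps out all the generators $[b_{n+1},d_i(a)]$ of $G_{n+1}'$. One final commutator within $G_n'$ lands the whole thing in $G_n''$ while preserving the single-coordinate support, after which \cref{lemma: GGS Amaia} upgrades the single slot to the full direct power $G_{n+1}'\times\overset{q^{l_n}}{\cdots}\times G_{n+1}'$.

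The main obstacle, and the step deserving the most care, is the disjoint-support arrangement: making precise that conjugating $b_n$ by elements of $A_n$ lets one place the ``$b_{n+1}$-carrying'' block and the ``staircase'' block into prescribed, non-overlapping first-level (and then lower-level) positions, so that iterated commutators genuinely collapse to a single level-$l_n$ coordinate rather than leaking into neighbours. This uses crucially that the staircase of $b_n$ occupies only levels $l_n$ through $l_n+l_{n+1}-1 = l_{n+1}-1+\cdots$ (by the recursion $l_{n+1}=q^{l_n-1}$) and then ``hands off'' cleanly to $b_{n+1}$, so the two blocks $\mathbf a_{n+1}$ and $\mathbf b_{n+1}$ do live in different first-level sub-blocks and can be separated by the transitive action of $A_n$; the rest is routine commutator calculus plus \cref{lemma: GGS Amaia}.
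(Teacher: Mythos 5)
Your overall strategy coincides with the paper's: reduce to exhibiting, in a single level-$l_n$ coordinate, the generators $[b_{n+1},d_i(a)]$ of $G_{n+1}'$ inside $\psi_{l_n}(G_n'')$, then let \cref{lemma: GGS Amaia} (with $H=G_{n+1}$ and $L=\langle S\rangle^{G_{n+1}}=G_{n+1}'$ via \cref{proposition: properties of Gn}(iii)) propagate that one slot to all $q^{l_n}$ slots; and produce the single-slot elements by block-notation commutator calculus, exploiting that $\mathbf{b_{n+1}}$ commutes with its $A_n$-conjugates while $A_{n+1}$ is elementary abelian. That reduction and those commuting facts are exactly what the paper uses.

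The gap is in the step you yourself flag as deferred, and it is not just bookkeeping. You propose to first manufacture an element of $G_n'$ supported in one slot with section $[b_{n+1},d_i(a)]$, and then pass to $G_n''$ ``after one more commutator''. This cannot work in that order: commutating such an element against another element of $G_n'$ replaces the section $[b_{n+1},d_i(a)]$ by a commutator of it with something else (pushing you into $G_{n+1}''$ in that slot, which is too deep to generate $G_{n+1}'$), while commutating against a conjugate of $b_n$ or of $d_j(a)$ only lands in $[G_n',G_n]=\gamma_3(G_n)$, which need not lie in $G_n''$. The commutator that creates the section $[b_{n+1},d_i(a)]$ must \emph{itself} be a commutator of two elements of $G_n'$, one contributing $\mathbf{b_{n+1}}$ and the other $\mathbf{d_i(a)_*}$ at the target slot, with every other slot-overlap occurring between commuting blocks. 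The paper arranges this with the pair $[a^{-j},b_n^{-1}]$ and $[a^{-j-1},(b_n^{-1})^{h}]$ for $2\le j\le q-3$ (this is where $q\ge 5$ enters) and $h\in\mathrm{St}_{A_n}(1)$ with $\mathbf{a_{n+1}}^{h_1}=\mathbf{d_i(a)_*}$: the offsets $j$ and $j+1$ force the two four-slot elements to overlap only where $\mathbf{b_{n+1}}$ meets $\mathbf{d_i(a)_*}$ and where the interacting blocks commute. A smaller but related slip: two $A_n$-conjugates of $b_n$ cannot have their $\mathbf{b_{n+1}}$-blocks in the same first-level coordinate while their staircase blocks are disjoint, since the root action of $A_n$ is by powers of $\sigma$ and shifts both blocks rigidly; and if the $\mathbf{b}$-blocks did coincide the commutator there would vanish. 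What you need (and evidently intend later in your text) is the $\mathbf{b}$-block of one meeting the staircase block of the other.
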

\begin{proof}
    Let us fix $n\ge 1$ and then $2\le j\le q-3$ as $q\ge 5$. Recall that we act on the tree on the right. A simple computation shows that
    $$\psi([a^{-j},b_{n}^{-1}])=(\mathbf{a_{n+1}^{\mathrm{-1}}},1,\dotsc,1,\mathbf{b_{n+1}},\mathbf{a_{n+1}},1,\overset{j-2}{\dotsc},1,\mathbf{b_{n+1}^{\mathrm{-1}}}).$$
    
    By \cref{proposition: properties of Gn}\textcolor{teal}{(iii)} we have  $G_n'=\langle [b_n,d_i(a)]~|~i=0,1,\dotsc,l_n-1\rangle^{G_n}$. Therefore by \cref{lemma: GGS Amaia}, we only need to find an automorphism $g\in G_n''$ such that $\psi_{l_{n}}(g)=(1,\dotsc,1,[b_{n+1},d_i(a)],1,\overset{(j+1)q^{l_n-1}}{\ldots},1)$
    for each $i=0,1,\dotsc,l_{n+1}-1$. Let us fix this $i$. Now by the proof of \cref{lemma: G is level-transitive} there exists an element $h\in \mathrm{St}_{A_{n}}(1)$ such that $\mathbf{a}^{h_1}_{\mathbf{n+1}}=\mathbf{d_i(a)_*}$ where $\psi(h)=(h_1,\dotsc,h_q)$. Then
    $$\psi(b_n^h)=(\mathbf{d_i(a)_{*}},1,\dotsc,1,(\mathbf{b_{n+1}})^{h_q}).$$
    Therefore
    $$\psi([a^{-j-1},(b_{n}^{-1})^h])=(\mathbf{d_i(a)_{*}^{\mathrm{-1}}},1,\dotsc,1,(\mathbf{b_{n+1}})^{h_q},\mathbf{d_i(a)_{*}},1,\overset{j-1}{\dotsc},1,(\mathbf{b_{n+1}^{\mathrm{-1}}})^{h_q}).$$
The equality $[\mathbf{b_{n+1}^{\mathrm{-1}}},(\mathbf{b_{n+1}^{\mathrm{-1}}})^{h_q}]=1$ is clear taking into account that $h_q\in A_{n}$; and the equality $[\mathbf{a_{n+1}^{\mathrm{-1}}},\mathbf{d_i(a)_{*}^{\mathrm{-1}}}]=1$ follows from \cref{lemma: An is elementary abelian}. Thus we obtain 
    $$\psi\big(\big[[a^{-j},b_{n}^{-1}],[a^{-j-1},b_{n}^h]\big]\big)=(1,\dotsc,1,[\mathbf{b_{n+1}},\mathbf{d_i(a)_{*}}],1,\overset{j}{\dotsc},1).$$
    Since
    $$\psi_{l_{n}-1}([\mathbf{b_{n+1}},\mathbf{d_i(a)_{*}}])=(1,\dotsc,1,[b_{n+1},d_i(a)]),$$
    the result follows.
\end{proof}

\begin{lemma}
    \label{lemma: Gn has finite abelianization}
    For $n,k\ge 1$, the derived subgroup $G_n^{(k)}$ is of finite index in $G_n$.
\end{lemma}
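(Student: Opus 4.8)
The plan is to induct on $k\ge 1$, proving the statement for all $n\ge 1$ simultaneously at each stage. The reason this works is that the recursive structure of the groups $G_n$ relates $G_n$ to $G_{n+1}$ in a way that drops exactly one derived length at the cost of passing to a finite direct power, so induction on $k$ feeds on itself.

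First I would handle the base case $k=1$. The group $G_n=\langle A_n,b_n\rangle$ is finitely generated, and all of its generators have order dividing $q$ by \cref{lemma: An is elementary abelian} and \cref{lemma: order of an and bn}. Hence $G_n/G_n'$ is a finitely generated abelian group of finite exponent and is therefore finite; thus $|G_n:G_n'|<\infty$ for every $n\ge 1$.

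For the inductive step, fix $k\ge 2$ and assume $|G_m:G_m^{(k-1)}|<\infty$ for all $m\ge 1$; the goal is $|G_n:G_n^{(k)}|<\infty$ for all $n\ge 1$. I would record three facts. (1) Since $\mathrm{St}_{G_n}(l_n)$ is normal in $G_n$ and $b_n\in\mathrm{St}_{G_n}(l_n)$, each commutator $[b_n,d_i(a)]$ lies in $\mathrm{St}_{G_n}(l_n)$, so by \cref{proposition: properties of Gn}(iii) we get $G_n^{(k)}\le G_n'\le\mathrm{St}_{G_n}(l_n)$. (2) The map $\psi_{l_n}$ restricts to an injective homomorphism on $\mathrm{St}_{G_n}(l_n)$ whose image is contained in $P:=G_{n+1}\times\overset{q^{l_n}}{\dotsb}\times G_{n+1}$ by \cref{proposition: properties of Gn}(ii). (3) Because $\psi_{l_n}$ is a homomorphism on the subgroup $G_n''\le\mathrm{St}_{G_n}(l_n)$, it commutes with the derived series there; writing $G_n^{(k)}=(G_n'')^{(k-2)}$ and applying \cref{lemma: G is branch},
\begin{align*}
\psi_{l_n}\big(G_n^{(k)}\big)&=\big(\psi_{l_n}(G_n'')\big)^{(k-2)}\supseteq\big(G_{n+1}'\times\overset{q^{l_n}}{\dotsb}\times G_{n+1}'\big)^{(k-2)}\\
&=G_{n+1}^{(k-1)}\times\overset{q^{l_n}}{\dotsb}\times G_{n+1}^{(k-1)}=:D.
\end{align*}
By the inductive hypothesis $D$ has finite index in $P$, and since $D\le\psi_{l_n}(G_n^{(k)})\le\psi_{l_n}(\mathrm{St}_{G_n}(l_n))\le P$, it follows that $\psi_{l_n}(G_n^{(k)})$ has finite index in $\psi_{l_n}(\mathrm{St}_{G_n}(l_n))$. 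By injectivity of $\psi_{l_n}$ on $\mathrm{St}_{G_n}(l_n)$, the subgroup $G_n^{(k)}$ has finite index in $\mathrm{St}_{G_n}(l_n)$, and combining this with $|G_n:\mathrm{St}_{G_n}(l_n)|=|A_n|<\infty$ from \cref{proposition: properties of Gn}(i) completes the induction.

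The substantive input is \cref{lemma: G is branch}, which is already in hand, so no serious obstacle remains; the two points that need care are the decision to induct on $k$ with $n$ universally quantified (rather than on $n$), and the verification that $\psi_{l_n}$ genuinely commutes with forming the derived series on the subgroups in play — which is legitimate precisely because $G_n'$, and hence everything below it, lies inside $\mathrm{St}_{G_n}(l_n)$, the set on which $\psi_{l_n}$ is a group homomorphism.
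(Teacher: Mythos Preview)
Your proof is correct and follows essentially the same approach as the paper's own proof: induction on $k$ with $n$ universally quantified, the base case handled by finite torsion generation, and the inductive step driven by \cref{lemma: G is branch} together with the injectivity of $\psi_{l_n}$ and \cref{proposition: properties of Gn}. The only difference is that you spell out explicitly why $G_n'\le\mathrm{St}_{G_n}(l_n)$ and why $\psi_{l_n}$ commutes with taking derived subgroups, which the paper leaves implicit.
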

\begin{proof}
    Note by \cref{lemma: order of an and bn} that for every $n\ge 1$ the group $G_n$ is finitely generated by torsion elements, so the abelianization $G_n/G_n'$ is clearly finite. Assume by induction on $k$ that $G_n^{(k-1)}$ is of finite index in $G_n$ for every $n\ge 1$. Then by \cref{lemma: G is branch},
    $$\psi_{l_{n}}(G_n^{(k)})= \psi_{l_{n}}((G_n'')^{(k-2)}) \ge \big(G_{n+1}'\times\overset{q^{l_{n}}}{\dotsb}\times G_{n+1}'\big)^{(k-2)}=G_{n+1}^{(k-1)}\times\overset{q^{l_{n}}}{\dotsb}\times G_{n+1}^{(k-1)}.$$
    Now $G_{n+1}^{(k-1)}$ having finite index in $G_{n+1}$ implies that $G_n^{(k)}$ is of finite index in $\mathrm{St}_{G_n}(l_{n})$, and thus in $G_n$, by \cref{proposition: properties of Gn} and the injectivity of the homomorphism $\psi_{l_{n}}$.
\end{proof}

To prove just infiniteness we need the following well-known lemma; we refer the reader to \cite[Lemma 4]{pro-c} for a short proof.

\begin{lemma}
    \label{lemma: rist' is in normals}
    Let $G$ be a level-transitive group. Then any non-trivial normal subgroup of $G$ contains the commutator subgroup of some rigid level stabilizer of $G$.
\end{lemma}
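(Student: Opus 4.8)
The plan is to run the classical commutator argument for groups acting level-transitively on a rooted tree. Let $N\ne 1$ be normal in $G$ and fix a non-trivial $g\in N$. Since $g$ does not fix every vertex, there is a vertex $u$, say at level $n$, with $(u)g\ne u$; set $v:=(u)g$, which also lies at level $n$ because tree automorphisms preserve levels. Two elementary geometric facts drive the proof: (a) distinct vertices at the same level have disjoint descendant subtrees, so the elements of $\mathrm{rist}_G(u)$ commute with the elements of $\mathrm{rist}_G(v)$; and (b) conjugation by $g$ maps the subtree below $u$ onto the subtree below $v$ and therefore carries $\mathrm{rist}_G(u)$ isomorphically onto $\mathrm{rist}_G(v)$.

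The core step is to produce the derived subgroup of a single rigid vertex stabilizer inside $N$. For $x\in\mathrm{rist}_G(u)$ one has $[x,g]=(x^{-1}g^{-1}x)g\in N$ by normality, and writing $[x,g]=x^{-1}x^{g}$ exhibits this element as a product of $x^{-1}\in\mathrm{rist}_G(u)$ with $x^{g}\in\mathrm{rist}_G(v)$. I would then verify, for $x,y\in\mathrm{rist}_G(u)$, the identity $[x,g]\,[y,g]\,[xy,g]^{-1}=[x,y]$, where the cancellations use that $x^{g},y^{g}\in\mathrm{rist}_G(v)$ commute with $x^{-1},y^{-1}\in\mathrm{rist}_G(u)$ together with $x^{g}y^{g}=(xy)^{g}$. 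Since the left-hand side is a product of members of $N$, it follows that $[x,y]\in N$ for all $x,y\in\mathrm{rist}_G(u)$, that is, $\mathrm{rist}_G(u)'\le N$.

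Finally I would globalize via level-transitivity. Every vertex $w$ at level $n$ equals $(u)h$ for some $h\in G$, so $\mathrm{rist}_G(w)'=(\mathrm{rist}_G(u)')^{h}\le N^{h}=N$. As $\mathrm{Rist}_G(n)$ is the internal direct product of the subgroups $\mathrm{rist}_G(w)$ over the vertices $w$ at level $n$ (their supports being pairwise disjoint), its derived subgroup is the product of the $\mathrm{rist}_G(w)'$, hence contained in $N$; this is exactly the assertion. (If $\mathrm{rist}_G(u)=1$ the conclusion is vacuous after the same transitivity step.)

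I do not anticipate a genuine obstacle here: this is a textbook fact, matching the cited \cite[Lemma 4]{pro-c}, so the only work is careful bookkeeping — checking the commutator identity under the paper's ``maps on the right'' convention and keeping straight which rigid vertex stabilizers commute — and the one point deserving attention is the initial choice of $g$ and $u$, where one must ensure $g$ genuinely moves a vertex so that $u\ne v$ and the disjointness in (a) applies.
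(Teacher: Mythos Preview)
Your argument is correct and is precisely the standard proof of this well-known lemma. Note that the paper does not actually prove this statement itself; it merely cites \cite[Lemma~4]{pro-c}, and your commutator computation $[x,g][y,g][xy,g]^{-1}=[x,y]$ together with the transitivity step is exactly the classical route recorded there.
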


\begin{proposition}
\label{proposition: G is just infinite branch}
 For $n\ge 1$, the group $G_n$ is just infinite and branch.
\end{proposition}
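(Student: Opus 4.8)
The plan is to establish just infiniteness and branchness of $G_n$ separately, then bootstrap both properties simultaneously by downward induction on $n$ through the subdirect embedding $\psi_{l_n}(\mathrm{St}_{G_n}(l_n))\le G_{n+1}\times\overset{q^{l_n}}{\dotsb}\times G_{n+1}$ provided by \cref{proposition: properties of Gn}(ii). Since the definition of each $G_n$ refers to $G_{n+1}$, there is no bottom of the induction in the usual sense; instead I would argue that the relevant properties are ``uniform'' across all $n\ge 1$, so that a property holding for $G_{n+1}$ forces it for $G_n$, and conclude it holds for all $n$. Concretely, I would first dispatch branchness: by \cref{lemma: G is branch} we have $\psi_{l_n}(G_n'')\ge G_{n+1}'\times\overset{q^{l_n}}{\dotsb}\times G_{n+1}'$, and since $G_n$ is level-transitive by \cref{lemma: G is level-transitive}, it remains to see that $G_{n+1}'$ has finite index in $G_{n+1}$ (which is \cref{lemma: Gn has finite abelianization} with $k=1$) and that $G_n'$ has finite index in $G_n$ as well; combining these, $\mathrm{Rist}_{G_n}(l_n)\ge \psi_{l_n}^{-1}(G_{n+1}'\times\overset{q^{l_n}}{\dotsb}\times G_{n+1}')$ is of finite index in $\mathrm{St}_{G_n}(l_n)$ and hence in $G_n$, so the rigid level stabilizers at levels that are multiples of the $l_k$'s are of finite index; using level-transitivity and that rigid stabilizers at deeper levels contain sections of these, one gets that all rigid level stabilizers are of finite index, so $G_n$ is branch.

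For just infiniteness, I would use \cref{lemma: rist' is in normals}: any non-trivial normal subgroup $N\trianglelefteq G_n$ contains $\mathrm{Rist}_{G_n}(m)'$ for some level $m$. So it suffices to show that $\mathrm{Rist}_{G_n}(m)'$ has finite index in $G_n$ for every $m$. Since $G_n$ is branch, $\mathrm{Rist}_{G_n}(m)$ has finite index in $G_n$; the point is then that the derived subgroup of a finite-index subgroup of $G_n$ is again of finite index, which follows from \cref{lemma: Gn has finite abelianization} applied at the appropriate levels — more precisely, $\mathrm{Rist}_{G_n}(m)$ contains $\psi_{l_k+\cdots}^{-1}$ of a direct product of copies of some $G_{n+j}$ for suitable $j$ once $m$ is large enough relative to the $l$-sequence, and $G_{n+j}''$ (hence the derived subgroup of any finite-index subgroup containing it) is of finite index by \cref{lemma: Gn has finite abelianization}. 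Chasing this through, $\mathrm{Rist}_{G_n}(m)'$ is of finite index in $G_n$, so $N$ is of finite index, giving just infiniteness.

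The main obstacle I anticipate is the bookkeeping around the non-recursive, ``shifting'' nature of the $b_n$: because $G_n$ is not self-similar and the staircase of $b_n$ only becomes trivial at level $l_n+q^{l_n-1}-1=l_n+l_{n+1}-1$ where $b_{n+1}$ takes over, one must be careful about which level $l_n$ (or multiple thereof) makes $\mathrm{Rist}_{G_n}$ contain a genuine direct product of copies of $G_{n+1}$ rather than merely a subdirect one. The subdirect-versus-direct gap is exactly what \cref{lemma: G is branch} (via \cref{lemma: GGS Amaia}) is designed to bridge, so I would lean on it heavily: it upgrades the subdirect embedding of $\mathrm{St}_{G_n}(l_n)$ to a genuine direct product of copies of $G_{n+1}'$ inside $\psi_{l_n}(G_n'')$. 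A secondary subtlety is making the induction genuinely well-founded: I would phrase it as ``the statement \emph{$G_m$ is just infinite and branch} for all $m\ge n$ implies it for $m\ge n-1$,'' and observe that since the construction is the same shape for every index, one may equivalently prove the contrapositive — a minimal failure would propagate upward in the index, contradicting e.g. \cref{lemma: Gn has finite abelianization} which holds unconditionally for all $n,k$. Thus the finite-index derived-series fact does the real work, and everything else is assembling the branch and just-infinite conclusions from it together with level-transitivity and \cref{lemma: rist' is in normals}.
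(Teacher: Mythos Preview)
Your approach uses the same ingredients as the paper's: \cref{lemma: G is level-transitive}, \cref{lemma: G is branch}, \cref{lemma: Gn has finite abelianization}, and \cref{lemma: rist' is in normals}, assembled in the same way. The one genuine misstep is your ``downward induction on $n$'' framing, which you yourself recognize as ill-founded and then try to rescue via a vague minimal-counterexample argument. This detour is unnecessary and never becomes rigorous as stated.

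The paper avoids it entirely: fix $n$ once and for all, set $t_k:=\sum_{i=0}^{k} l_{n+i}$ (cumulative sums, not ``multiples of the $l_k$'s''), and iterate \cref{lemma: G is branch} through the levels $t_0<t_1<\cdots$ to obtain
\[
\psi_{t_k}\big(\mathrm{Rist}_{G_n}(t_k)'\big)\ \ge\ G_{n+k+1}''\times\overset{q^{t_k}}{\dotsb}\times G_{n+k+1}''.
\]
Since $G_{n+k+1}''$ has finite index in $G_{n+k+1}$ by \cref{lemma: Gn has finite abelianization} (which already holds unconditionally for every index), $\mathrm{Rist}_{G_n}(t_k)'$ has finite index in $G_n$; as $t_k\to\infty$, both branchness and just infiniteness follow via \cref{lemma: rist' is in normals}. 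No induction on the group index $n$ is needed --- the ``uniformity across all $n$'' you are reaching for is precisely the content of \cref{lemma: Gn has finite abelianization}, and the iteration is over the level parameter $k$, not over $n$.
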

\begin{proof}
    Let us fix any $n\ge 1$. By \cref{proposition: properties of Gn}\textcolor{teal}{(ii)} and \cref{lemma: G is branch} all the rigid level stabilizers of $G_n$ are of finite index in $G_n$ and by \cref{lemma: G is level-transitive} the group $G_n$ is level-transitive. Hence $G_n$ is branch. Now for $k\ge 0$ we set $t_k:=\sum_{i=0}^k l_{n+i}$ and observe that by \cref{lemma: G is branch}
    $$\psi_{t_k}(\mathrm{Rist}_{G_n}(t_k)')\ge (G_{n+k+1}'\times\overset{q^{t_k}}{\dotsb}\times G_{n+k+1}')'=G_{n+k+1}''\times\overset{q^{t_k}}{\dotsb}\times G_{n+k+1}''.$$
    Therefore $\mathrm{Rist}_{G_n}(t_k)'$ is of finite index in $G_n$ for all $n\ge 1$ by \cref{lemma: Gn has finite abelianization} and since $t_k\to\infty$ as $k\to\infty$, any non-trivial normal subgroup of $G_n$ contains $\mathrm{Rist}_G(t_k)'$ for some $k\ge 0$ by \cref{lemma: rist' is in normals}. Then it follows that $G$ is just infinite.
\end{proof}

\begin{proposition}
    \label{proposition: G has trivial Hausdorff dimension}
The closure of $G_n$ in $\Gamma_q$ has zero Hausdorff dimension for all $n\ge 1$. 
\end{proposition}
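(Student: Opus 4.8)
The plan is to bound the orders of the congruence quotients $(G_n)_k=G_n/\mathrm{St}_{G_n}(k)$ directly and to exhibit a subsequence of levels along which the ratio $\log|(G_n)_k|/\log|(\Gamma_q)_k|$ tends to $0$; since $\mathrm{hdim}_{\Gamma_q}(\overline{G_n})$ equals the $\liminf$ of this ratio over all $k$, and $(\overline{G_n})_k\cong(G_n)_k$, this forces the dimension to vanish. Throughout I take logarithms in base $q$, so that $\log|(\Gamma_q)_k|=(q^k-1)/(q-1)$, and I write $f_n(k):=\log|(G_n)_k|$.

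First I would record two elementary estimates on $f_n$. By \cref{proposition: properties of Gn}(i) we have $G_n=\mathrm{St}_{G_n}(l_n)\rtimes A_n$, and by \cref{lemma: order of an and bn} and \cref{lemma: An is elementary abelian} the group $A_n$ is the direct product of the $l_n$ cyclic groups $\langle d_i(a)\rangle$ with $0\le i\le l_n-1$, so $|A_n|=q^{l_n}$; hence $f_n(l_n)=l_n$ and $f_n(k)\le l_n$ for $k\le l_n$. For $k\ge l_n$ I would use \cref{proposition: properties of Gn}(ii): the map $\psi_{l_n}$ embeds $\mathrm{St}_{G_n}(l_n)$ as a subdirect product of $q^{l_n}$ copies of $G_{n+1}$, and it identifies each section $g|_v$ of an element $g\in\mathrm{St}_{G_n}(l_n)$ at a level-$l_n$ vertex $v$ with an element of $G_{n+1}$. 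Together with the injectivity of $\psi_{l_n}$ on $\mathrm{St}(l_n)$ this gives $\psi_{l_n}(\mathrm{St}_{G_n}(k))=\psi_{l_n}(\mathrm{St}_{G_n}(l_n))\cap\big(\mathrm{St}_{G_{n+1}}(k-l_n)\times\cdots\times\mathrm{St}_{G_{n+1}}(k-l_n)\big)$, and taking indices and adding $\log|A_n|=l_n$ yields the recursion $f_n(k)\le l_n+q^{l_n}f_{n+1}(k-l_n)$ for all $k\ge l_n$.

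Next I would unroll this recursion along the levels $T_j:=l_n+l_{n+1}+\cdots+l_{n+j-1}$ (with $T_0:=0$). Since $\mathrm{St}(0)$ is the whole group we have $f_{n+j}(0)=0$, and an induction on $j$ — whose only delicacy is tracking the shifted indexing seen successively by $G_{n+1},G_{n+2},\dots$ — gives $f_n(T_j)\le\sum_{i=0}^{j-1}q^{T_i}l_{n+i}$, where $T_i:=l_n+\cdots+l_{n+i-1}$. Comparing with $\log|(\Gamma_q)_{T_j}|\ge q^{T_j-1}$ and using $T_j-T_i\ge l_{n+j-1}$ and $l_{n+i}\le l_{n+j-1}$ for all $0\le i\le j-1$, the ratio at level $T_j$ is at most $q\,j\,l_{n+j-1}\,q^{-l_{n+j-1}}$. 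Finally, since $l_1=2$, $l_{m+1}=q^{l_m-1}$ and $q\ge 5$, the sequence $(l_m)$ is strictly increasing, so $l_{n+j-1}\ge j$, and thus this bound is $\le q\,l_{n+j-1}^{2}\,q^{-l_{n+j-1}}\to 0$ as $j\to\infty$. Hence $\liminf_k f_n(k)/\log|(\Gamma_q)_k|=0$, i.e. $\mathrm{hdim}_{\Gamma_q}(\overline{G_n})=0$.

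I do not expect a genuine obstacle: the content of the argument is really the choice of the levels $T_j$, which converts the super-exponential growth $l_{m+1}=q^{l_m-1}$ of the staircase scales into decay of the density of $G_n$ at those levels. The only step that requires a little care is the intersection identity for $\psi_{l_n}(\mathrm{St}_{G_n}(k))$ underpinning the recursion, which however follows immediately from \cref{proposition: properties of Gn}(ii) and the injectivity of $\psi_{l_n}$ on $\mathrm{St}(l_n)$; the remaining estimates are routine.
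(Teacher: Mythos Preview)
Your proof is correct and follows essentially the same approach as the paper: you pass to the subsequence of levels $T_j=\sum_{i=0}^{j-1}l_{n+i}$ (the paper's $i_j$), use \cref{proposition: properties of Gn} to bound $\log|(G_n)_{T_j}|$ by $\sum_{i=0}^{j-1}q^{T_i}l_{n+i}$, and compare with $\log|(\Gamma_q)_{T_j}|$ to obtain a ratio dominated by $j\,l_{n+j-1}\,q^{-l_{n+j-1}}$. The paper packages the same estimate slightly differently (bounding the numerator by $(k+1)$ times the largest consecutive-stabilizer quotient and the denominator from below by the last such quotient in $\Gamma_q$), but the underlying computation and the limiting bound $(k+1)l_{n+k}/(q^{l_{n+k}}-1)\to 0$ are identical.
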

\begin{proof}
Let us set  $i_0:=0$ and also $i_k:=\sum_{j=0}^{k-1}l_{n+j}$ for $k\ge 1$. From \cref{proposition: properties of Gn} we get that $\mathrm{St}_{G_n}(i_k)/\mathrm{St}_{G_n}(i_{k+1})$ embedds in the direct product $A_{n+k}\times \overset{q^{i_k}}{\ldots}\times A_{n+k}$. Since $|A_{n+k}|=q^{l_{n+k}}$ we obtain
$$\log|\mathrm{St}_{G_n}(i_{k}):\mathrm{St}_{G_n}(i_{k+1})|\le q^{i_{k}}l_{n+k}.$$

However
    $$\log|\mathrm{St}_{\Gamma_q}(i_{k}):\mathrm{St}_{\Gamma_q}(i_{k+1})|=q^{i_k}\log|\Gamma_q:\mathrm{St}_{\Gamma_q}(l_{n+k})|=q^{i_{k}}\frac{q^{l_{n+k}}-1}{q-1}.$$
Therefore as $\nabla \{i_{k}\}=\{l_{n+k}\}_{k\ge 0}$ is a non-decreasing sequence we get
\begin{align*}
    \mathrm{hdim}_{\Gamma_q}(\overline{G}_n)&=\liminf_{k\to\infty}\frac{\log|G_n:\mathrm{St}_{G_n}(k)|}{\log|\Gamma_q:\mathrm{St}_{\Gamma_q}(k)|}\\
    &\le\liminf_{k\to\infty}\frac{\log|G_n:\mathrm{St}_{G_n}(i_{k+1})|}{\log|\Gamma_q:\mathrm{St}_{\Gamma_q}(i_{k+1})|}\\
    &\le \liminf_{k\to\infty}(k+1)\frac{\max_{j=0,\dotsc,k}\{\log|\mathrm{St}_{G_n}(i_{j}):\mathrm{St}_{G_n}(i_{j+1})|\}}{\log|\mathrm{St}_{\Gamma_q}(i_{k}):\mathrm{St}_{\Gamma_q}(i_{k+1})|}\\
    &\le(q-1)\lim_{k\to\infty}\frac{(k+1)l_{n+k}}{q^{l_{n+k}}-1}=0. \qedhere
\end{align*}
\end{proof}

The same argument works for any $m\ge 5$ not a prime power too. For $m=2,3,4$ instead of the rooted automorphism $a$ given by the $m$-cycle $\sigma$, one may consider the finitary automorphism of order $m^3$ given by truncating the so-called adding machine $\psi(d)=(1,\dotsc,1,d)\sigma$ at the third level of the tree. Then we set $l_1:=5$ and $l_{n+1}=m^{l_n-3}$ so that we have at least 5 blocks of length at least 3 in the recursive definition of $b_n$. Placing these blocks according to the aforementioned $m^3$-cycle ensures the same arguments as for $m\ge 5$ apply and we obtain a family of groups~$G_n$ with the desired properties acting on the $2,3,4$-adic trees respectively.



\bibliographystyle{unsrt}

\typeout{get arXiv to do 4 passes: Label(s) may have changed. Rerun}

\end{document}